\newcommand*\Rnorm[1]{\left[\!\left]#1\right[\!\right]}
\newcommand*\cnorm[1]{\left\vert\!\left[\!\left]#1\right[\!\right]\!\right\vert}
\numberwithin{theorem}{section}
\title{Primal-Dual Reduced Basis Methods for Convex Minimization Variational Problems:
Robust True Solution A Posteriori Error Certification and Adaptive Greedy Algorithms}
\author{
  Shun Zhang\thanks{Department of Mathematics, City University of Hong Kong, Kowloon Tong, Hong Kong SAR, China.
     \email{shun.zhang@cityu.edu.hk}.
   This work was supported in part by Hong Kong Research Grants Council under the GRF Grant Project CityU 11305319.}
   The paper is accepted by SIAM Journal on Scientific Computing.
}
\renewcommand{\theequation}{\thesection.\arabic{equation}}
\newtheorem{thm}{Theorem}[section]
\newtheorem{prop}[thm]{Proposition}
\newtheorem{defn}[thm]{Definition}
\newtheorem{rem}[thm]{Remark}
\newtheorem{asm}[thm]{Assumption}
\begin{document}
\newcommand{\BX}{{\bf X}}
\newcommand{\cv}{{\cal V}}
\newcommand{\cW}{{\cal W}}
\newcommand{\co}{{\cal O}}

\renewcommand{\theequation}{\thesection.\arabic{equation}}
\def\@eqnnum{{\reset@font\rm (\theequation)}}

\def\abstract{
\advance \rightskip by 10mm
\advance \leftskip by 10mm
\vspace{-0.8em}
\noindent
\small{\bf Abstract.}
}
\def\endabstract{\par\normalsize\rm}

\def\Xint#1{\mathchoice
{\XXint\displaystyle\textstyle{#1}}%
{\XXint\textstyle\scriptstyle{#1}}%
{\XXint\scriptstyle\scriptscriptstyle{#1}}%
{\XXint\scriptscriptstyle\scriptscriptstyle{#1}}%
\!\int}
\def\XXint#1#2#3{{\setbox0=\hbox{$#1{#2#3}{\int}$}
\vcenter{\hbox{$#2#3$}}\kern-.5\wd0}}
\def\ddashint{\Xint=}
\def\dashint{\Xint-}

\def\a{\alpha}
\def\b{\beta}
\def\d{\delta}\def\D{\Delta}
\def\e{\epsilon}
\def\g{\gamma}\def\G{\Gamma}
\def\k{\kappa}
\def\lam{\lambda}\def\Lam{\Lambda}
\renewcommand\o{\omega}\renewcommand\O{\Omega}
\def\s{\sigma}\def\S{\Sigma}
\renewcommand\t{\theta}\def\vt{\vartheta}
\newcommand{\vphi}{\varphi}
\def\z{\zeta}

\newcommand{\tsigma}{\tilde{\s}}
\newcommand{\tbsigma}{\tilde{\bsigma}}
\def\te{\tilde{\e}}
\def\tu{\tilde{u}}

\newcommand{\bchi}{\mbox{\boldmath$\chi$}}
\newcommand{\bdelta}{\mbox{\boldmath$\delta$}}
\newcommand{\bepsilon}{\mbox{\boldmath$\epsilon$}}
\newcommand{\bfeta}{\mbox{\boldmath$\eta$}}
\newcommand{\bgamma}{\mbox{\boldmath$\gamma$}}
\newcommand{\bomega}{\mbox{\boldmath$\omega$}}
\newcommand{\bvphi}{\mbox{\boldmath$\varphi$}}
\newcommand{\bphi}{\mbox{\boldmath$\phi$}}
\newcommand{\bPhi}{\mbox{\boldmath$\Phi$}}
\newcommand{\bpsi}{\mbox{\boldmath$\psi$}}
\newcommand{\bPsi}{\mbox{\boldmath$\Psi$}}
\newcommand{\bsigma}{\mbox{\boldmath$\sigma$}}
\newcommand{\btau}{\mbox{\boldmath$\tau$}}
\newcommand{\bxi}{\mbox{\boldmath$\xi$}}
\newcommand{\brho}{\mbox{\boldmath$\rho$}}
\newcommand{\bbeta}{\mbox{\boldmath$\beta$}}
\newcommand{\bzeta}{\mbox{\boldmath$\zeta$}}

\def\bk{\boldsymbol{\kappa}}
\def\bmu{\boldsymbol\mu}
\def\bxi{\boldsymbol{\xi}}
\def\bz{\boldsymbol{\zeta}}

\def\ba{{\bf a}}
\def\bb{{\bf b}}
\def\bc{{\bf c}}
\def\be{{\bf e}}
\def\bff{{\bf f}}
\def\bg{{\bf g}}
\def\bn{{\bf n}}
\def\bp{{\bf p}}
\def\bq{{\bf q}}
\def\bs{{\bf s}}
\def\bt{{\bf t}}
\def\bu{{\bf u}}
\def\bv{{\bf v}}
\def\bw{{\bf w}}
\def\bx{{\bf x}}
\def\by{{\bf y}}
\def\bzz{{\bf z}}

\def\bD{{\bf D}}
\def\bE{{\bf E}}
\def\bF{{\bf F}}
\def\bH{{\bf H}}
\def\bJ{{\bf J}}
\def\bV{{\bf V}}
\def\bU{{\bf U}}
\def\bW{{\bf W}}
\def\bX{{\bf X}}
\def\bY{{\bf Y}}

\def\cA{{\cal A}}
\def\cC{{\cal C}}
\def\cD{{\cal D}}
\def\cE{{\cal E}}
\def\cF{{\cal F}}
\def\cG{{\cal G}}
\def\cI{{\cal I}}
\def\cJ{{\cal J}}
\def\cK{{\cal K}}
\def\cL{{\cal L}}
\def\cO{{\cal O}}
\def\cP{{\cal P}}
\def\cQ{{\cal Q}}
\def\cR{{\cal R}}
\def\cS{{\cal \Sigma}}
\def\cT{{\cal T}}
\def\cU{{\cal U}}
\def\cV{{\cal V}}

\def\scT{{_\cT}}
\def\sD{{_D}}
\def\sE{{_E}}
\def\sF{{_F}}
\def\sFz{{_{F_z}}}
\def\sK{{_K}}
\def\sI{{_I}}
\def\sb{{_b}}
\def\sN{{_N}}

\def\curl{{{\bf curl} \ }}
\def\rot{{\mbox{rot}\ }}
\def\BPI{{\bf \Pi}}

\def\cth{\cT_h}
\def\ctH{\cT_H}

\def\tJ{\tilde{\J}}

\def\hK{\widehat{K}}
\def\hx{\widehat{x}}
\def\hy{\widehat{y}}
\def\bhv{\widehat{\bv}}

\def\l{\ell}
\def\bl{\boldsymbol{\ell}}
\def\col{\colon}
\def\f12{\frac12}
\def\dfrac{\displaystyle\frac}
\def\dint{\displaystyle\int}
\def\nab{\nabla}
\def\p{\partial}
\def\sm{\setminus}
\def\dsum{\displaystyle\sum}
\newcommand{\pp}[2]{\frac{\partial {#1}}{\partial {#2}}}
\def\bzero{{\bf 0}}

\def\divv{\nab\cdot}
\def\divx{\nab_x\cdot}
\def\divtx{\nab_{t,x}\cdot}
\def\nabx{\nab_x}

\newcommand{\grad}{\nabla}
\newcommand{\curlt}{{\nabla \times}}
\newcommand{\gperp}{\nabla^{\perp}}
\newcommand{\gradt}{\nabla\cdot}

\def\forallqq{\quad\forall\,}
\def\aph{A^{1/2}}
\def\amh{A^{-1/2}}

\def\osc{{\rm osc \, }}

\def\Im{{\rm Im}}
\newcommand{\tr}{{\rm tr}}
\def\divvr{{\rm div}}
\def\curllr{{\rm curl}}
\def\curll{{\rm curl}}
\def\curl{{\bf curl}}
\newcommand{\bgrad}{{\bf grad}}
\newcommand\diam{\mathrm{diam\,}}
\renewcommand\Im{\mathrm{Im\,}}
\def\Span{\mbox{Span}}
\def\supp{\mbox{supp\,}}
\newcommand{\trace}{{\rm trace}}

\newcommand{\tri}{|\!|\!|}
\newcommand{\ljump}{\lbrack\!\lbrack}
\newcommand{\rjump}{\rbrack\!\rbrack}
\newcommand{\bdm}{\begin{displaymath}}
\newcommand{\edm}{\end{displaymath}}
\newcommand{\beq}{\begin{equation}}
\newcommand{\eeq}{\end{equation}}
\newcommand{\beqa}{\begin{eqnarray}}
\newcommand{\eeqa}{\end{eqnarray}}
\newcommand{\beqas}{\begin{eqnarray*}}
\newcommand{\eeqas}{\end{eqnarray*}}
\newcommand{\ul}{\underline}
\newcommand{\wh}{\widehat}
\newcommand{\la}{\langle}
\newcommand{\ra}{\rangle}

\newcommand{\Lt}{L^2(\Omega)}
\newcommand{\Lts}{L^2(\Omega)^2}
\newcommand{\Ltc}{L^2(\Omega)^3}
\newcommand{\Ho}{H^1(\Omega)}
\newcommand{\Hoh}{H^1(\wh{\Omega})}
\newcommand{\Hoi}{H^1(\Omega_i)}
\newcommand{\Hos}{H^1(\Omega)^2}
\newcommand{\Hoc}{H^1(\Omega)^3}
\newcommand{\Hoch}{H^1(\wh{\Omega})^3}
\newcommand{\Hoci}{H^1(\Omega_i)^3}
\newcommand{\Hoz}{H^1_0(\Omega)}
\newcommand{\Ht}{H^2(\Omega)}
\newcommand{\Hti}{H^2(\Omega_i)}
\newcommand{\Hts}{H^2(\Omega)^2}
\newcommand{\Htc}{H^2(\Omega)^3}
\newcommand{\Htz}{H^0(\Omega)}
\newcommand{\Hh}{H^{1/2}(\Gamma)}
\newcommand{\Hhi}{H^{1/2}(\Gamma_i)}
\newcommand{\Hmh}{H^{-1/2}(\Gamma)}
\newcommand{\Hdiv}{H(\divvr;\,\Omega)}
\newcommand{\Hdivh}{H(\divv;\,\wh \Omega)}
\newcommand{\hcurl}{H(\curl\,A;\,\Omega)}
\newcommand{\Hcurl}{H(\curll\,A;\,\Omega)}
\newcommand{\Hcrl}{H(\curll\,;\,\Omega)}
\newcommand{\hcrl}{H(\curl\,;\,\Omega)}
\newcommand{\Hcrlh}{H(\curll\,;\,\wh\Omega)}
\newcommand{\hcrlh}{H(\curl\,;\,\wh\Omega)}
\newcommand{\Wdiv}{\BW_0(\mbox{\divv}\,;\,\Omega)}
\newcommand{\Wcurl}{\BW_0(\mbox{\curl}\,A;\,\Omega)}
\newcommand{\WcrossV}{\BW \times V}

\def\calS{{\cal S}}
\def\calT{{\cal T}}
\def\cB{{\cal B}}
\def\cH{{\cal H}}
\def\ba{{\mathbf{a}}}
\def\cM{{\cal M}}
\def\cN{{\mathcal{N}}}
\def\cE{{\mathcal{E}}}
\def\cT{{\mathcal{T}}}
\def\cJ{{\mathcal{J}}}
\def\cL{{\mathcal{L}}}

\def\bbA{{\mathbb{A}}}

\def\bbC{{\mathbb{C}}}

\def\bE{{\bf E}}
\def\bS{{\bf S}}
\def\bT{{\bf T}}

\def\br{{\bf r}}
\def\bW{{\bf W}}
\def\bLambda{{\bf \Lambda}}

\def\beps{{\bf\epsilon}}

\newcommand{\lJump}{[\![}
\newcommand{\rJump}{]\!]}
\newcommand{\jump}[1]{[\![ #1]\!]}

\newcommand{\st}{\tilde{\bsigma}}
\newcommand{\sh}{\hat{\bsigma}}
\newcommand{\rd}{\brho^{\Delta}}

\newcommand{\WH}{W\!H}
\newcommand{\NE}{N\!E}

\newcommand{\ND}{N\!D}
\newcommand{\BDM}{B\!D\!M}

\newcommand{\langles}{\langle\!\langle}
\newcommand{\rangles}{\rangle\!\rangle}

\newcommand{\eps}{\varepsilon}

\newcommand{\Jp}{\mathsf{J}^\mathtt{p}}
\newcommand{\Jd}{\mathsf{J}^\mathtt{d}}

\newcommand{\sT}{{_T}}
\newcommand{\sRT}{{_{RT}}}
\newcommand{\sBDM}{{_{BDM}}}
\newcommand{\sWH}{{_{WH}}}
\newcommand{\sND}{{_{ND}}}
\newcommand{\sV}{_\cV}

\newcommand{\dd}{\underline{{\mathbf d}}}
\newcommand{\C}{\rm I\kern-.5emC}
\newcommand{\R}{\rm I\kern-.19emR}
\newcommand{\W}{{\mathbf W}}
\def\3bar{{|\hspace{-.02in}|\hspace{-.02in}|}}
\newcommand{\A}{{\mathcal A}}

\newcommand{\rb}{{\texttt{rb}}}
\newcommand{\fe}{{\texttt{fe}}}

\newcommand{\rferb}{{\mathtt{r_{rb,fe}}}}

\newcommand{\and}{\quad\mbox{and}\quad}

\def\dunderline#1{\underline{\underline{#1}}}

\newcommand{\matr}[1]{\bm{#1}}     

\def\mtx#1{\dunderline{\matr{#1}}}

\newcommand{\vect}[1]{\mathsf{#1}}     
\def\vec#1{\underline{\vect{#1}}}

\newcommand{\cblue}[1]{\color{blue}{#1}}

\maketitle
\begin{abstract}
The a posteriori error estimate and greedy algorithms play central roles in the reduced basis method (RBM). In \cite{Yano:15,Yano:16,Yano:18}, several versions of RBMs based on exact error certifications and greedy algorithms with spatio-parameter adaptivities are developed. In this paper, with the parametric symmetric  coercive elliptic boundary value problem as an example of the primal-dual variational problems satisfying the strong duality, we develop primal-dual reduced basis methods (PD-RBM) with robust true error certifications and discuss three versions of greedy algorithms to balance the finite element error, the exact reduced basis error, and the adaptive mesh refinements. 

For a class of convex minimization variational problems which has corresponding dual problems satisfying the strong duality, the primal-dual gap between the primal and dual functionals can be used as a posteriori error estimator. This primal-dual gap error estimator is robust with respect to the parameters of the problem, and it can be used for both mesh refinements of finite element methods and the true RB error certification. 

With the help of integrations by parts formula, the primal-dual variational theory is developed for the symmetric coercive elliptic boundary value problems with non-homogeneous boundary conditions by both the conjugate function and Lagrangian theories.  A generalized Prager-Synge identity, which is the primal-dual gap error representation for this specific problem, is developed. RBMs for both the primal and dual problems with robust error estimates are developed.   
The dual variational problem often can be viewed as a constraint optimization problem. In the paper, different from the standard saddle-point finite element approximation,  the dual RBM is treated as a Galerkin projection by constructing RB spaces satisfying the homogeneous constraint.

Inspired by the greedy algorithm with spatio-parameter adaptivity of \cite{Yano:18}, adaptive balanced greedy algorithms with primal-dual finite element and reduced basis error estimators are discussed. Numerical tests are presented to test the PD-RBM with adaptive balanced greedy algorithms.
\end{abstract}

\begin{keywords}
reduced basis method, greedy algorithm,
primal-dual variational problems, primal-dual gap estimator, robust true error certification
\end{keywords}


\section{Introduction}\label{intro}
\setcounter{equation}{0}
%

The reduced basis method (RBM)  is a very accurate and efficient method for solving parameterized problems many times for different parameter values within a given parametric domain, see review articles and books \cite{Maday:06,RHP:08,QRM:11,QMN:16,HRS:16,OR:16,Ha:17,BCOW:17}. The a posteriori error estimate and greedy algorithms play central roles in RBM. In pioneering papers of Yano \cite{Yano:15,Yano:16,Yano:18}, several versions of RBMs based on exact error certifications and greedy algorithms with spatio-parameter adaptivities are developed.  In this paper, with the parametric symmetric coercive elliptic boundary value problem as an example of the problems in the primal-dual variational framework, we develop primal-dual reduced basis methods (PD-RBM) with robust true error certifications and discuss three versions of greedy algorithms to balance the finite element (FE) error, the exact RB error, and the adaptive mesh refinements. 

In this introduction, we first present some concerns of the  residual type of error estimator and greedy algorithms of the classic RBM as our motivations. Then we discuss PD-RBM and adaptive balanced greedy algorithms to be presented in the paper.


\subsection{Motivations from concerns about the residual error estimator and classic greedy algorithms}

Traditionally, the residual type of error estimator is used in RBM. As it is known to many researchers \cite{Yano:15,Yano:16,Yano:18,OS:15,ASU:17}, several concerns are known about the residual type of error estimator and classic greedy algorithms: (1) {\it The RB error and the FE error tolerances are often unbalanced.} It is often assumed that the FE solution is very accurate and can be viewed as a "truth" solution, then the RB basis is built to make the difference between the RB and FE solutions (measured by the residual error estimator) to be less than some tolerance. In practice, such assumption that the FE error is very small is often questionable, and the FE error and the error between RB and FE solutions should be balanced in the greedy algorithm.  (2) {\em The RB snapshots are often computed on non-optimal meshes.} In RBM, in order to compute the inner products of different RB snapshots, we prefer that all RB snapshots are computed in the same mesh. (Although we can compute them in different meshes and use some grid transfer operators to compute their inner products. But these grid transfer operators are non-trivial for complicated meshes.) For many challenging problems, different parameters need different locally refined meshes. If the computational resource is not unlimited, can we construct a common mesh which is good for all parameters in the domain?

Besides the above two concerns, the parametric robustness of RBM are rarely discussed. For PDEs depending on parameters/coefficients, it is often very important to get the so-called {\em robust} error estimates, which means there should be no unknown generic constant depending on parameters/coefficients appearing in a priori or a posteriori error estimates, see a series of papers on robust a priori and a posteriori finite element estimates \cite{BeVe:00,Ver:98,Ver:05,Ver:05b,TV:15,Pet:02,CaZh:09,CZ:10a,CZ:12,CaHeZh:17,CCZ:20,CCZ:20mixed}. A non-robust result under or over estimates the error. For parameter-dependent  problems, such under or over estimates can lead to inaccurate results and computational resources wasted. Thus, we have two extra concerns. 

(3) {\em The error is often not measured in the parameter-dependent intrinsic energy norm, and is not robust, thus with a possible big gap between the estimated error and actual error.}  It is shown in Propositions 2.26 and 2.28 of \cite{Ha:17} that the ratio of the residual RB error estimator and the true RB error is bounded by the ratio (or the square root of the ratio) of the continuity constant and stability (inf-sup or coercivity) constant. Take the parametric diffusion problem (also called the thermal block problem in RBM literature, see Section 6.1 for a detailed description) with the diffusion coefficient $A=\a(x;\mu)I$, $\a(x;\mu)>0$, and the homogeneous Dirichlet boundary condition $u=0$ on $\p\O$ as an example.  Choose the norm to be the standard $H_0^1(\O)$ norm, $\|\nabla v\|_0$ ($\|\cdot\|_0$ is the $L^2$-norm). For a given parameter $\mu$, the coercivity constant is $\min_{x\in \O}\{\a(x;\mu)\}$ and the continuity constant is $\max_{x\in \O}\{\a(x;\mu)\}$. Thus, the possible ratio of the residual RB error estimator and the true RB error can be quite big if the genetic $H^1_0$-norm is used. Similar results are also true for a priori error estimates of RBM, see Prop. 2.19 of  \cite{Ha:17}.
On the other hand, if we choose the norm to be the intrinsic energy norm, $\|A^{1/2}\nabla v\|_0$, the continuity and coercivity constants of the bilinear form are both one. If the RB error is measured in the intrinsic energy norm, we can have robust a priori and a posteriori error estimates with the ratio being one. Some earlier discussion on using energy norm can also be found in \cite{Yano:15,Yano:18}. 

(4) {\it The role of the lower bound of coercivity constant is often limited for many cases.} 
The discrete stability constant usually appears in the RB residual error estimator. 
If no other means are available, one may use the Successive Constraint Method (SCM) \cite{HuRoSePa:07,Yano:18} to estimate it. But as shown in the diffusion equation example, even if the computation of the constant is exact, the quality of the a priori and a posteriori error estimates depends on the ratio of the continuity constant and the stability constant. From the linear algebra viewpoint, the discrete stability constant and the discrete continuity constant are the smallest and largest generalized singular values of the matrix associated with the bilinear form, respectively. Thus, even if we pay some non-trivial expense to compute the discrete stability constant very accurately, it still can over-estimate the error if we measure the error in parameter-independent genetic norms. On the other hand, the parametric energy norm uses all the information of the bilinear form and its associated matrix. 

For the classic residual type of error estimator of RBM, a parametric dependent norm cannot be used due to the breakdown of the offline-online decomposition. With points (3) and (4), to reduce the effect of non-robustness, we should use an energy norm with a fixed parameter instead of using a genetic norm, as suggested in page A2869 of \cite{OS:15}. This will improve the ratio between the residual estimator and the RB error to the parameter variation (or the square root of it).

\subsection{Methods of the paper}
Having above concerns in mind, it is important to seek alternative error estimators. Classical greedy algorithms should also be modified to balance the RB error, the FE error, and adaptive mesh refinements with the help of the new error estimator. 
For the adaptive FEM, a posteriori error estimators also play the central role. Beside the residual type error estimator, many other types error estimators are studied in adaptive FEMs, see \cite{AO:00, Ver:13}. One of them is {\em the primal-dual gap error estimator}. 

Many mathematical, physical, and economical important problems can be viewed as minimizations of a primal energy functional, like the reaction-diffusion equation, the diffusion equation, the linear elasticity problem, elliptic obstacle problems, the nonlinear Laplacian problem, contact problems in mechanics, the Monge-Kantorovich problem, and others, see \cite{ET:76,Zeidler:85,NR:04,Han:05}. For many of these important problems, {\em a pair of primal-dual variational problems satisfying the strong duality} can be constructed by either the convex conjugate theory or the saddle-point Lagrangian theory. For approximations of the primal and dual problems, the primal-dual gap between the primal and dual functionals at their respective approximations is a guaranteed error bound. In FEMs, the primal-dual variational framework is used to construct robust a posteriori error estimator for various problems with very good properties, see  \cite{BS:08,CZ:12,EV:15,CCZ:20,CCZ:20mixed,Han:05,Repin:00,Repin:00_nonlinear,NR:04,BHS:08,BPS:20}. 

In this paper, we first discuss the primal and dual problems and a posteriori error estimate in an abstract setting, then two approaches are presented to construct primal and dual variational problems. We then discuss a class of symmetric coercive linear elliptic mixed boundary value problem and develop the primal and dual problems and error relations by the two version of theories. 
Our contribution on the primal-dual theory of the elliptic problem are three-fold: instead of using the adjoint operator which is hard to define for non-homogeneous boundary problem, with the help of integration by parts formula and the formal adjoint operator, the dual problem and its constraint set can be stated straightforwardly; second, we present both the convex conjugate function and the Lagrangian approaches to develop the dual problems; third,  we explain how the Lagrangian is constructed by known relations between the primal and dual variable. A generalized Prager-Synge identity, which is the primal-dual gap error representation for the symmetric coercive linear elliptic boundary value problem, is developed.

In the pioneering paper of Yano \cite{Yano:15}, important ideas like a posteriori error estimate for RBM based on the primal-dual gap, the exact error certification based on the energy norm are proposed. Following the ideas of \cite{Yano:15}, with the symmetric coercive elliptic mixed boundary value problem as an example of {\em the primal-dual variational problems satisfying the strong duality}, we propose {\em primal-dual methods with primal-dual gap error estimators used for both FEMs and RBMs}. In the FE context, the primal-dual gap error estimator is reliable, efficient, and robust with respect to the parameters, and can be used to drive the adaptive mesh refinements. When applied to the RBM, unlike the residual error estimator, the primal-dual gap error estimator measures the true error in the robust parameter-dependent  energy norm, not some "truth" FE-RB error in non-robust parameter-independent  generic norm, and there is no need to compute the stability constant. Thus, the primal-dual gap a posteriori error estimator can overcome the concerns of the standard residual type error estimator for this class of primal-dual variational problems. Although, we only discuss PD-RBM for a linear problem for simplicity, with the help of more advanced tools like the empirical interpolation method \cite{GMNP:07}, the methods here can be generalized to more complicated problems in the primal-dual variational framework.

In the primal-dual variational framework, the dual variational problem is often a constraint optimization problem. A constraint, which is also called the dual-feasibility condition, needs to be enforced. In \cite{Yano:15}, the dual RB basis functions are locally constructed, and the dual RB problem is also solved as a constraint optimization problem (a saddle point problem) to enforce the dual-feasibility condition. In \cite{Yano:16,Yano:18}, a minimal-residual mixed method and a modified version with a problem dependent coefficient (both are based on least-squares variational principles) are proposed. As a least-squares method, which is based on an artificial energy functional, the method in \cite{Yano:16} avoids the exact dual-feasibility condition and can be applied to problems beyond the primal-dual variational setting. In \cite{Yano:18}, in a modified version, a problem-dependent weight $1/\delta$ is incorporated into the least-squares functional to control and penalize the violation of the dual-feasibility condition. 

In this paper, we present a systematic version of the PD-RBM with robust a priori and a posteriori error estimates. In the PD-RBM developed in the paper, we construct the RB basis for both the primal and dual problems by solving global FE problems to ensure the good quality of the RB basis. A unified best approximation property and a posteriori error estimates of FEMs and RBMs of the primal, dual, and the primal-dual combined problems are shown. Especially, we show in a comparison result that measured in energy norms, the FE solution is always better than or as good as its corresponding RB solution for the same parameter. This comparison result can guide the choice of RB stopping criteria. Different from \cite{Yano:15}, for the dual RBM, instead of solving a constraint optimization problem, using the technique similar to the treatment of the non-homogenous essential condition, the problem is transformed into a weighted Galerkin projection problem in the space with a homogeneous dual-feasibility condition. Thus, both the primal and dual RBMs are treated in the same classic RBM framework. The treatment of the constraint in the dual RBM also opens doors for simple treatments of RBMs for other problems with constraints. Compared to \cite{Yano:18}, since the dual-feasibility condition is enforced exactly in our dual RBM, the method developed in the paper can be viewed as an optimal case of choosing the weight $\delta=0$ in \cite{Yano:18}.

Even though the idea of the primal-dual gap error estimator is widely used in the adaptive FEM community, we need to point out some important differences when applying it to the RBM. For the primal-dual gap a posteriori error estimator for FEMs, where the goal is to estimate the error for the primal problem, an approximation of the dual solution is often only solved locally to reduce the computational cost, see \cite{BS:08,CZ:12,AV:19}. Sometimes, the dual variable is even constructed purely explicitly to ensure the process is computationally cheap, see \cite{BS:08}. For a parameter-dependent problem, simple explicit construction may lead to the bound to be non-robust, that is, although it is a guaranteed upper bound, but the bound can be too big, see the example presented in \cite{Ver:09}. Thus, even for local constructions of the dual variables for FE a posteriori error estimates, we pointed out that a local optimization problem is necessary to ensure the error estimator is robust with respect to the parameter, see \cite{CZ:12}. For the case of RBM, the scenario is different. Not only used in the FE error estimator, the RB dual solutions are also going to be used to compute the RB primal-dual gap error estimator. To ensure the primal-dual gap and the related RB primal-dual gap error estimator are tight and efficient, we propose to compute the RB snapshots of the dual problems globally in full instead of computing them locally. This will cost a little more in the offline stage, but this can guarantee the primal and the dual FE approximations are of the same order of accuracy. The good quality of the dual RB basis functions is important to get a good dual RB approximation and a tight error bound. It is also crucial to guarantee the optimal a priori error estimates and the comparison results in Theorem \ref{comp_fe_rb}. Computing both the primal and dual unknowns accurately is very common in the least-squares approach, both in the FEM \cite{BG:09} and in the RBM \cite{Yano:16,Yano:18}.

For the greedy procedures, there are several existing improvements of the standard algorithms with adaptive FE computations. In \cite{Yano:16,Yano:18}, spatio-parameter adaptive greedy algorithms are proposed. Other approaches combining RBM and adaptive FEM can be found in \cite{OS:15,URL:16,ASU:17}. Inspired by the greedy algorithm with spatio-parameter adaptivity by \cite{Yano:18}, we discuss three versions of balanced adaptive greedy algorithms to balance the FE error, the exact RB error, and the adaptive mesh refinements. The first algorithm is for a fixed mesh but an adaptive RB tolerance. The purpose of this algorithm is to develop a stopping criteria for the RB greedy algorithm:  the RB set quality is determined by the worst FE approximation in the parameter set. The second algorithm, assuming unlimited computational resources, is developed with a fixed RB tolerance but a common adaptive mesh, which is the spatio-parameter adaptivity algorithm by \cite{Yano:18} in our context. Assuming a limited computational resource, the third algorithm tries to generate a balanced mesh for the whole RB parameter set under a limited number of DOFs with bi-adaptivities: the RB tolerance adaptivity and the mesh adaptivity. When generating the mesh in the third algorithm, an error estimator which estimates the total error for all RB snapshots is used to drive the mesh refinement to ensure the final common mesh is balanced for the whole snapshot parameters. 

\subsection{Layout of the paper}
The layout of the paper is as follows. In Section 2, we first discuss the primal and dual problems and a posteriori error estimate in an abstract setting, then two approaches are presented to construct primal and dual variational problems. In Section 3, we discuss a class of symmetric coercive  linear elliptic mixed boundary value problem and develop its primal and dual problems  by two versions of theories and a generalized Prager-Synge identity. FEMs and PD-RBMs for the linear elliptic boundary value problem are discussed in Sections 4 and 5, respectively. In Section 6, we first present several examples of the symmetric coercive linear elliptic boundary value problem, then we discuss examples beyond the linear elliptic boundary value problems. In Section 7, we present three adaptive balanced greedy algorithms. Using the diffusion equation with discontinuous coefficient on an L-shape domain as an example, numerical experiments performed to test the PD-RBM and  three adaptive balanced greedy algorithms are presented in Section 7. In Section 8, we make some concluding remarks and discuss some future plans.

\section{Primal-Dual Variational Principles}
\setcounter{equation}{0}

For a class of convex minimization problems having a corresponding dual problem satisfying the strong duality, such as the diffusion equation, the reaction-diffusion problem, the linear elasticity problem, the obstacle problem, the $p$-Laplacian problem, and others, the primal-dual variational principle provides a framework for a posteriori error estimation, see the books of Ekeland and Temam \cite{ET:76}, Zeidler \cite{Zeidler:85}, Neittaanmäki and Repin \cite{NR:04}, and Han \cite{Han:05}. In this section, we first discuss the primal and dual problems and a posteriori error estimate in an abstract setting, then we present two approaches to construct primal and dual variational problems.

\subsection{Abstract primal and dual problems and a posteriori error estimate}
First, we describe the basic idea of deriving the a posteriori error estimate using the primal-dual framework (p.57 of \cite{Han:05}). Let $\Jp$ be a given functional ({\em the primal functional or the primal energy functional}) on a non-empty set $K$. For a minimization problem  ({\em the primal problem}):
\beq \label{pp}
\mbox{Find  } u\in K: \quad
\Jp(u) = \inf_{v\in K} \Jp(v), 
\eeq
we wish to construct a functional $\Jd$ ({\em the dual functional or the complementary energy functional}) on a non-empty set $M$, and a related maximization problem ({\em the dual problem}):
\beq \label{pd}
\mbox{Find  } \sigma \in M:\quad 
\Jd(\sigma) = \sup_{\tau\in M} \Jd(\tau).
\eeq
We assume that the following equality for the optimizers holds:
\beq \label{pequal}
\mbox{(strong duality)}\quad\quad
\Jp(u) = \Jd(\sigma). 
\eeq
Also, we expect some {\em extremal relations} hold for the optimizers $u$ and $\sigma$.
%
%

Let $\hat{u}\in K$ and $\hat{\sigma}\in M$, define the primal and dual energy differences, respectively,
\beq
ED_p(u, \hat{u}) = \Jp(\hat{u}) - \Jp(u)\quad\mbox{and}\quad
ED_d(\sigma, \hat{\sigma}) =  \Jd(\sigma) - \Jd(\hat{\sigma}).
\eeq
It is easy to check that both the energy differences are non-negative.
We have the following simple and elegant result.
\begin{prop}\label{error}
Suppose that \eqref{pequal} is true, then for any $\hat{u}\in K$ and $\hat{\sigma}\in M$, we have
\begin{eqnarray}\label{duality_identity}
\mbox{(combined error identity)}\quad
ED_p(u, \hat{u})+ED_d(\sigma, \hat{\sigma}) = \Jp(\hat{u}) - \Jd(\hat{\sigma}), \\[1mm]
\label{duality_bound}
0\leq ED_p(u, \hat{u})\leq \Jp(\hat{u}) - \Jd(\hat{\sigma}),
\quad \mbox{and}\quad
0\leq ED_d(\sigma, \hat{\sigma}) \leq \Jp(\hat{u}) - \Jd(\hat{\sigma}).
\end{eqnarray}
\end{prop}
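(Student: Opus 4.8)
The plan is to obtain the combined error identity \eqref{duality_identity} by a direct substitution of the definitions of the two energy differences, and then to read off the two one-sided bounds in \eqref{duality_bound} as immediate consequences of that identity together with the non-negativity of each energy difference.

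First I would expand the left-hand side of \eqref{duality_identity} using the definitions of $ED_p$ and $ED_d$, which gives
\[
ED_p(u,\hat{u}) + ED_d(\sigma,\hat{\sigma}) = \big(\Jp(\hat{u}) - \Jp(u)\big) + \big(\Jd(\sigma) - \Jd(\hat{\sigma})\big).
\]
Regrouping the four terms so as to isolate $\Jp(\hat{u})$ and $-\Jd(\hat{\sigma})$ leaves the residual combination $\Jd(\sigma) - \Jp(u)$, which vanishes exactly by the strong duality hypothesis \eqref{pequal}. This produces the asserted identity $ED_p(u,\hat{u}) + ED_d(\sigma,\hat{\sigma}) = \Jp(\hat{u}) - \Jd(\hat{\sigma})$.

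Next I would record the non-negativity of each energy difference, already remarked in the text: $ED_p(u,\hat{u}) = \Jp(\hat{u}) - \Jp(u) \ge 0$ because $u$ minimizes $\Jp$ over $K$ and $\hat{u}\in K$, and likewise $ED_d(\sigma,\hat{\sigma}) = \Jd(\sigma) - \Jd(\hat{\sigma}) \ge 0$ because $\sigma$ maximizes $\Jd$ over $M$ and $\hat{\sigma}\in M$. Combining these two inequalities with the identity just established yields both statements in \eqref{duality_bound}: since the two summands are non-negative and add up to $\Jp(\hat{u}) - \Jd(\hat{\sigma})$, each summand is bounded above by that common sum, while the lower bounds are precisely the non-negativity statements.

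The argument is purely algebraic and there is no genuine obstacle; the only point requiring care is the orientation of the two energy differences, where the primal difference is measured as $\Jp(\hat{u}) - \Jp(u)$ (approximation minus optimum, since $\Jp$ is minimized) whereas the dual difference is $\Jd(\sigma) - \Jd(\hat{\sigma})$ (optimum minus approximation, since $\Jd$ is maximized). Keeping these two orientations straight is exactly what makes the cross term collapse to $\Jd(\sigma) - \Jp(u)$ and lets strong duality close the identity.
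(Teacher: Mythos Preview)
Your proof is correct and follows exactly the same approach as the paper: the identity \eqref{duality_identity} comes directly from the definitions and the strong duality $\Jp(u)=\Jd(\sigma)$, and the bounds \eqref{duality_bound} then follow from the non-negativity of both energy differences.
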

\begin{proof}
The relation \eqref{duality_identity} follows from the fact $\Jp(u) = \Jd(\sigma)$. The bounds in \eqref{duality_bound} are then obtained by the fact that both energy differences are non-negative. 
\end{proof}

When $\hat{u} \in K$ is a (numerical) approximation of $u$ and $\hat{\sigma} \in M$ is a (numerical) approximation of $\sigma$, the primal-dual gap $\Jp(\hat{u}) - \Jd(\hat{\sigma})$ can be used as a guaranteed upper bound for both energy differences. If we view the primal-dual problem as one combined problem, then \eqref{duality_identity} shows that $\Jp(\hat{u}) - \Jd(\hat{\sigma})$ is precisely the combined energy difference. The following a posteriori error estimator for the primal-dual problem then can be defined:
\beq \label{gap}
\mbox{(primal-dual gap estimator)} \quad\xi=\Jp(\hat{u}) - \Jd(\hat{\sigma}).
\eeq
Suppose there is an underlying computational mesh $\cT$. On each element $T\in\cT$, carefully defining the primal and dual functionals' restriction on $T$, we can construct the following local error indicator to drive the mesh refinement algorithm:
\begin{eqnarray} \label{def_etaK}
\xi_T=\Jp(\hat{u})|_T - \Jd(\hat{\sigma})|_T,\quad \xi_T\geq 0, \quad \forall T \in \cT,\quad
\mbox{ s.t. } \xi = \sum_{T\in\cT} \xi_T.
\end{eqnarray}
For the quadratic functionals, the square roots of $\xi$ and $\xi_T$ are often used as the global error estimator and the local indicator, respectively. 

\subsection{Two approaches to construct primal and dual problems}
In the above abstract framework, it is assumed that the primal problem \eqref{pp} and the dual problem \eqref{pd} can be constructed such that the strong duality \eqref{pequal} is true. In this subsection, 
we present two simplified versions of the duality theory such that the construction is possible. One is the conjugate convex function theory and the other is the Lagrangian/saddle-point theory. More general versions and proofs can be found in \cite{ET:76,Zeidler:85,Han:05,NR:04,ABM:14}.

\subsubsection{Primal-dual variational theory via conjugate convex function}
We start from some definitions from convex analysis, see \cite{Rock:70,ET:76,Zeidler:85,aubin:98,RW:98,NR:04,Han:05}. Let $X$ be a reflexive Banach space (for example, a Hilbert space or an $L^p$ spaces for $1<p<\infty$). Let $\overline{\mathbb{R}}=\mathbb{R} \cup \{\pm \infty\}$ be the set of the extended real number. For a convex functional $f$, define its effective domain $\mbox{dom}(f)=\{x\in X: f(x) < \infty\}$ and its epigraph $\mbox{epi}(f)=\{(x,a)\in X\times \mathbb{R}: f(x) \leq a\}$. The functional $f$ is {\em proper} if $\mbox{dom}(f)\neq \emptyset$ and $f(x) > -\infty$ for all $x\in X$. $f$ is called {\em lower semicontinuous (l.s.c)} if its epigraph is a closed set. We use the notation $\Gamma_0(X)$ to denote the set of all proper, convex, l.s.c functionals on $X$. Define the conjugate functional (also known as the polar function, the Legendre–Fenchel transformation, or the Fenchel transformation, see \cite{RW:98,aubin:98,Rock:70}) of $f$ as
\beq
f^*(x^*) = \sup_{x\in \mbox{dom}(f)} [\langle x^*, x\rangle_{X^*,X} - f(x)],\quad \forall x^*\in X^*.
\eeq 
Let $Y$ and $Q$ be two reflexive Banach spaces, and $Y^*$ and $Q^*$ are their dual spaces. Assume that $\Lambda \in L(Y,Q)$ is a linear continuous operator from $Y$ to $Q$ and $\Lambda^* \in  L(Q^*,Y^*)$ is its adjoint operator,
\beq \label{defLambdastar}
\langle \Lambda^* q^*, v\rangle_{Y^*,Y} =  \langle q^*, \Lambda v\rangle_{Q^*,Q}, \quad\forall v\in Y,\; q^*\in Q^*.
\eeq
In general, we can discuss the primal problem with respect to the primal functional $\phi(v,\Lambda v)$, where $\phi$ maps $Y\times Q\to \overline{\mathbb{R}}$ and its corresponding dual problems, see \cite{ET:76}. Here, only a special case where $\phi$ is of a separated form is considered, 
$$
\phi(v,q) = F(v) + G(q), \quad \forall v\in Y, \; q \in Q.
$$
The conjugate functional of $\phi$ is (see \cite{ET:76,aubin:98,Han:05})
$$
\phi^*(v^*,q^*) = F^*(v^*) + G^*(q^*), \quad \forall v^*\in Y^*, \; q^* \in Q^*,
$$
where $F^*$ and $G^*$ are the respective conjugate functionals of $F$ and $G$. 
Then we have the following pair of primal and dual problems, see \cite{ET:76,aubin:98,Han:05}:

\begin{eqnarray}
\label{pphi}
\mbox{(Primal)}&\;&
\mbox{Find } u\in Y: \;
\Jp(u) = \inf_{v\in Y} \Jp(v), 
\;\Jp(v)=  F(v) + G(\Lambda v),\\
\label{dphi}
\mbox{ (Dual)}&&
\mbox{Find } \sigma \in Q^*: \;
\Jd(\sigma) = \sup_{\tau \in Q^*} \Jd(\tau), 
\; \Jd(\tau)=  -F^*(\Lambda^* \tau) - G^*(-\tau).
\end{eqnarray}
For the problems \eqref{pphi} and \eqref{dphi}, we have the following theorem, see Theorem 2.39 of \cite{Han:05}, Theorem 2.1 of \cite{Repin:00}, and \cite{ET:76}.
\begin{thm} \label{pdframeworkthm}
Assume:
1. $Y$ and $Q$ be two reflexive Banach spaces; $\Lambda \in L(Y,Q)$.
2. $F\in \Gamma_0(Y)$ and $G\in \Gamma_0(Y)$.
3. There exists $u_0\in Y$, s.t. $F$ is finite at $u_0$ and $G$ is continuous and finite at $\Lambda u_0$.
4. (Coercivity) $F(v) + G(\Lambda v) \to +\infty$ as $\|v\|\to \infty$.
Then \eqref{pphi} has a minimizer $u\in Y$ and \eqref{dphi} has a maximizer $\sigma \in Q^*$, such that the strong duality \eqref{pequal} is true,
and the following {\em  extremal relations} hold 
\begin{eqnarray} \label{dr1}
F(u) + F^*(\Lambda^* \sigma)- \langle \Lambda^* \sigma, u\rangle_{Y^*,Y} &=& 0,\\ \label{dr2}
G(\Lambda u) + G^*(-\sigma)+ \langle \sigma, \Lambda u\rangle_{Q^*,Q} &=& 0.
\end{eqnarray}
\end{thm}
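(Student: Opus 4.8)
The plan is to establish the theorem by the classical Fenchel--Rockafellar perturbation argument, proceeding in five stages: weak duality, existence of a primal minimizer, absence of a duality gap, existence of a dual maximizer, and finally the extremal relations. Throughout I would use only the definition of the conjugate and the adjoint relation \eqref{defLambdastar}.

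First I would prove weak duality, which needs no hypotheses beyond the definitions. For any $v\in Y$ and $\tau\in Q^*$, the definition of the conjugate yields the two Fenchel inequalities $F(v)+F^*(\Lambda^*\tau)\ge\langle\Lambda^*\tau,v\rangle_{Y^*,Y}=\langle\tau,\Lambda v\rangle_{Q^*,Q}$ and $G(\Lambda v)+G^*(-\tau)\ge-\langle\tau,\Lambda v\rangle_{Q^*,Q}$. Adding them, the duality pairings cancel and one gets $\Jp(v)\ge\Jd(\tau)$, hence $\inf_{v}\Jp(v)\ge\sup_{\tau}\Jd(\tau)$. Next, existence of a primal minimizer follows from the direct method: Assumption~4 forces every minimizing sequence to be bounded, reflexivity of $Y$ (Assumption~1) supplies a weakly convergent subsequence, and since $F,G\in\Gamma_0$ are convex and l.s.c.\ (hence weakly l.s.c.\ by Mazur's theorem) while $\Lambda$ is weak-to-weak continuous, the functional $\Jp$ is weakly l.s.c.; the weak limit is therefore a minimizer $u$. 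Assumption~3 gives $\Jp(u_0)<\infty$, so the infimum is finite.

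The crux is removing the duality gap, and I expect this to be the main obstacle since it is the only place the topological hypothesis is genuinely used. Here I would introduce the value (perturbation) function $h(p)=\inf_{v\in Y}[F(v)+G(\Lambda v+p)]$ on $Q$, so that $h(0)=\inf_{v}\Jp(v)=\Jp(u)$. A direct computation of the conjugate, substituting $r=\Lambda v+p$ and using \eqref{defLambdastar}, gives $h^*(q^*)=F^*(-\Lambda^*q^*)+G^*(q^*)$, from which the change of variable $q^*=-\tau$ shows $\sup_{\tau}\Jd(\tau)=-\inf_{q^*}h^*(q^*)=h^{**}(0)$. Since $h$ is convex, the strong duality $h(0)=h^{**}(0)$ is equivalent to lower semicontinuity of $h$ at the origin. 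This is exactly where the constraint qualification (Assumption~3) enters: because $G$ is continuous and finite at $\Lambda u_0$ and $F(u_0)<\infty$, for all $p$ in a neighborhood of $0$ one has $h(p)\le F(u_0)+G(\Lambda u_0+p)<\infty$, bounded above; a convex function bounded above near a point is continuous there, so $h$ is continuous, and in particular l.s.c., at $0$, closing the gap and yielding \eqref{pequal}.

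Finally, continuity of $h$ at $0$ makes $\partial h(0)$ nonempty; for any $q^*\in\partial h(0)$ the Fenchel equality gives $h(0)+h^*(q^*)=0$, so $q^*$ minimizes $h^*$ and $\sigma:=-q^*$ is a dual maximizer with $\Jd(\sigma)=-h^*(q^*)=h(0)=\Jp(u)$, establishing existence for \eqref{dphi} together with the strong duality. The extremal relations then follow without further work: rewriting $\Jp(u)=\Jd(\sigma)$ as
\[
\bigl[F(u)+F^*(\Lambda^*\sigma)-\langle\Lambda^*\sigma,u\rangle_{Y^*,Y}\bigr]+\bigl[G(\Lambda u)+G^*(-\sigma)+\langle\sigma,\Lambda u\rangle_{Q^*,Q}\bigr]=0,
\]
each bracket is nonnegative by the two Fenchel inequalities of the first step (using $\langle\Lambda^*\sigma,u\rangle_{Y^*,Y}=\langle\sigma,\Lambda u\rangle_{Q^*,Q}$), so both must vanish, which is precisely \eqref{dr1}--\eqref{dr2}.
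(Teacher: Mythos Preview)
Your proof is correct and complete; it is the classical Fenchel--Rockafellar perturbation argument. However, there is nothing to compare it against: the paper does not supply a proof of this theorem but instead states it with citations to Theorem~2.39 of Han, Theorem~2.1 of Repin, and Ekeland--Temam. Your argument is essentially the one found in those references (particularly Ekeland--Temam, Chapter~III), so in that sense you have reproduced the standard proof the paper defers to.
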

%
%
%
%

\subsubsection{Primal-dual variational theory via Lagrangian/saddle-point theory} 
For two nonempty sets $K$ and $M$, introduce a bivariate function $\cL: K\times M \to \mathbb{R}$. A point $(u,\sigma)\in K\times M$ is said to be a saddle-point of the Lagrangian $\cL$, if 
$$
\sup_{\tau\in M} \cL(u,\tau) = \cL(u,\sigma) = \inf_{v\in K}\cL(v,\sigma).
$$
We assume the following conditions hold.
(C1) Let $Y$ and $Z$ be two reflexive Banach spaces, and $K\subset Y$, $M\subset Z$ are convex, closed, and nonempty sets; 
(C2) The function $\cL$ is a continuous function that is concave-convex, i.e., 
$\cL(v,\tau)$  is concave on $v$ for a fixed  $\tau\in M$ and
$\cL(v,\tau)$  is convex on $\tau$ for a fixed $v\in K$;
(C3) $K$ is bounded or there exists a $\tau_0\in M$ such that $\cL(v,\tau_0)\rightarrow \infty$ as $\|v\|\rightarrow \infty$ in $K$;
(C3*) $M$ is bounded or there exists a $v_0\in K$ such that $\cL(v_0,\tau)\rightarrow \infty$ as $\|\tau\|\rightarrow \infty$ in $M$.

The existence of the saddle-point is guaranteed by the von Neumann's minimax theorem (Theorem 49.A of \cite{Zeidler:85}): $\cL$ has a saddle-point $(u,\sigma)\in K\times M$ if assumptions (C1)-(C3) and (C3*) are true.

Now define the primal  and dual problems via the Lagrangian $\cL$:
\begin{eqnarray}
\label{pp1}
\mbox{(Primal)}&\quad& \mbox{Find } u\in K:\quad \Jp(u) = \inf_{v\in K} \Jp(v), \quad \Jp(v)=\sup_{\tau\in M} \cL(v,\tau), \\[1mm]
\label{pd1}
\mbox{(Dual)}&\quad&\mbox{Find } \sigma\in M:\quad
\Jd(\sigma) = \sup_{\tau\in M} \Jd(\tau), \quad \Jd(\tau)= \inf_{v\in K}\cL(v,\tau).
\end{eqnarray}
We have the following Theorem (Theorem 49.B of \cite{Zeidler:85}) to connect the saddle-point and solutions of the primal and dual problems. 
\begin{thm}\label{sadllepointthm}
The following two statements are equivalent:
(i) $(u,\sigma)$ is a saddle-point of $\cL(v,\tau)$ on $K\times M$.
(ii) $u$ is a solution of \eqref{pp1}, 
$\sigma$ is a solution of \eqref{pd1}, and 
$$
\Jp(u) = \cL(u,\sigma) = \Jd(\sigma).
$$
\end{thm}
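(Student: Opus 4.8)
The plan is to prove the two implications separately; in both directions the entire argument reduces to unpacking the definitions of $\Jp$ and $\Jd$ in \eqref{pp1} and \eqref{pd1} and chaining them against the saddle-point inequalities. As orientation it is worth recording first the elementary weak-duality bound: for every $v\in K$ and $\tau\in M$,
\[
\Jd(\tau)=\inf_{w\in K}\cL(w,\tau)\leq \cL(v,\tau)\leq \sup_{\eta\in M}\cL(v,\eta)=\Jp(v),
\]
so that $\sup_{\tau\in M}\Jd(\tau)\leq\inf_{v\in K}\Jp(v)$ always holds. The real content of the theorem is the equality case (strong duality), and the proof will exhibit a saddle point as precisely a pair $(u,\sigma)$ that realizes this equality.

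For the implication (i) $\Rightarrow$ (ii) I would start from the saddle-point inequalities $\cL(u,\tau)\leq\cL(u,\sigma)\leq\cL(v,\sigma)$, valid for all $v\in K$ and $\tau\in M$. Taking the supremum over $\tau$ in the left inequality and the infimum over $v$ in the right one, and invoking the definitions $\Jp(u)=\sup_{\tau}\cL(u,\tau)$ and $\Jd(\sigma)=\inf_{v}\cL(v,\sigma)$, yields the chain $\Jp(u)=\cL(u,\sigma)=\Jd(\sigma)$ immediately. Optimality of each component then follows from two short estimates: for an arbitrary $v\in K$ one has $\Jp(v)\geq\cL(v,\sigma)\geq\cL(u,\sigma)=\Jp(u)$, so $u$ minimizes $\Jp$ and solves \eqref{pp1}; symmetrically, for an arbitrary $\tau\in M$ one has $\Jd(\tau)\leq\cL(u,\tau)\leq\cL(u,\sigma)=\Jd(\sigma)$, so $\sigma$ maximizes $\Jd$ and solves \eqref{pd1}.

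For the converse (ii) $\Rightarrow$ (i) the argument is even more direct. By the definitions of the primal and dual functionals one has $\sup_{\tau\in M}\cL(u,\tau)=\Jp(u)$ and $\inf_{v\in K}\cL(v,\sigma)=\Jd(\sigma)$, so the hypotheses $\Jp(u)=\cL(u,\sigma)=\Jd(\sigma)$ translate verbatim into $\sup_{\tau}\cL(u,\tau)=\cL(u,\sigma)=\inf_{v}\cL(v,\sigma)$, which is exactly the statement that $(u,\sigma)$ is a saddle point of $\cL$ on $K\times M$.

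I do not anticipate a genuine obstacle: once the definitions of $\Jp$ and $\Jd$ are written out, both directions collapse to one-line manipulations of suprema, infima, and the saddle-point inequalities, and nowhere do I need the concave--convex structure, reflexivity, or the coercivity conditions (C1)--(C3$^*$) — those are required only to \emph{guarantee the existence} of a saddle point (the von Neumann minimax theorem quoted before the statement), not for this equivalence. The single point to state carefully is the logical status of the equality $\Jp(u)=\Jd(\sigma)$: it is \emph{assumed} as part of (ii) and \emph{derived} as part of (i), and it is exactly this strong-duality equality, rather than the weak inequality recorded above, that ties the optimizers of \eqref{pp1} and \eqref{pd1} to one common saddle point.
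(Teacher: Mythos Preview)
Your argument is correct and complete. The paper itself does not supply a proof of this theorem: it is simply quoted as Theorem~49.B of Zeidler~\cite{Zeidler:85} and used as a black box, so there is no ``paper's own proof'' to compare against. Your derivation---recording the weak-duality inequality, then collapsing the saddle-point inequalities against the definitions of $\Jp$ and $\Jd$ in both directions---is exactly the standard one found in the cited reference, and your observation that conditions (C1)--(C3$^*$) are needed only for existence of a saddle point and not for this equivalence is accurate.
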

The problem of finding the critical point $(u,\sigma)$ (the saddle-point) of $\cL$ is the so called mixed method, see \cite{BBF:13,Arnold:90}. 
The two primal-dual variational theories are closely related by introducing a Lagrangian for the primal problem \eqref{pphi}, see \cite{ET:76,NR:04}:
\beq \label{L}
\cL(v,\tau)=\langle \tau, \Lambda v\rangle_{Q^*,Q} - G^*(\tau) -  F(v).
\eeq  
More details about the saddle-point theory can be found in Chapters 49-53 of \cite{Zeidler:85}, Chapter VI of \cite{ET:76}, Section 7.16 of \cite{Ciarlet:13}, and Section 5.4 of \cite{NR:04}.




\section{Symmetric Coercive Linear Elliptic Mixed Boundary Value Problems}
\setcounter{equation}{0}
In this section, we discuss a class of symmetric coercive linear elliptic mixed boundary value problems and develop the primal and dual problems and error relations by the two version of theories. 

The discussion of this problem and other individual problems can also be found in \cite{NR:04,ET:76,Zeidler:85}. 
Our contribution on this problem are three-fold: first, instead of using the adjoint operator $\Lambda^*$ which is hard to define for non-homogeneous boundary problem (in \cite{ET:76}, most problems are of homogeneous boundary conditions; in \cite{NR:04}, all the problems are defined with $\Lambda^*$ which has no explicit definition for a concrete problem with non-homogeneous mixed boundary conditions), with the help of integration by parts formula and a formal adjoint operator $\Lambda^t$, the dual problem and its constraint set can be stated straightforwardly; second, we present both the convex conjugate function and Lagrangian approaches to develop the dual problems; third, we explain how the Lagrangian is constructed by the known relations between the primal and dual variables \eqref{lag}. A generalized Prager-Synge identity, which is the primal-dual gap error representation for the symmetric  coercive linear elliptic boundary value problem, is developed.

Let $\Omega$ be a bounded polyhedral domain in $\mathbb{R}^d$ ($d=2$ or $3$) with Lipschitz boundary. 
We use the notation $(\cdot,\cdot)$ to denote the inner product in $L^2$ space on the domain $\O$ and $\|\cdot\|_0$ to denote the $L^2$-norm. We do not specify the dimension of $L^2$ space, $L^2(\O)$ can be $L^2(\O)^n$, for some positive integer $n$. Let $V$ be a Hilbert space compactly embedded in $L^2(\O)$ and $\Lambda \in L(V,L^2(\O))$. We assume that $\cA$ is a self-adjoint (symmetric) operator on $L^2(\O)$.
Let $\Lambda^t$ be the formal adjoint for $\Lambda$ for smooth enough functions. Note that the definition of $\Lambda^t$ is different from $\Lambda^*$  in \eqref{defLambdastar} since $\Lambda^*$ may depend on boundary conditions.
Define the space $\Sigma$ as 
$$
\Sigma =\{\tau \in L^2(\O): \Lambda^t \tau \in L^2(\O)\}.
$$
Define two trace operators $\tr_0(v)$ on $\p\O$ for $v\in V$ and $\tr_1(\tau)$ on $\p\O$ for $\tau \in \Sigma$, such that the following integrations by parts formula (Green's theorem) holds,
\beq \label{ibp}
(\tau, \Lambda v)_\O - (\Lambda^t \tau, v)_\O = (\tr_1(\tau), \tr_0(v))_{\p\O}, \quad \forall (v, \tau)\in V\times \Sigma.
\eeq
Let the boundary $\p\O = \overline{\Gamma}_D \cup \overline{\Gamma}_N$ with $\Gamma_D \cap \Gamma_N =\emptyset$. We assume $meas({\Gamma_D}) \neq 0$. Define the following subspaces:
\begin{eqnarray}
&V_0: =\{v\in V: \tr_0(v) =0 \mbox{ on }\Gamma_D\}, &
\\ \label{def_S0_S00}
&\Sigma_0: =\{\tau \in \Sigma: \tr_1(\tau) =0 \mbox{ on }\Gamma_N\},
\quad\mbox{and}\quad
\Sigma_{00}=\{\tau \in \Sigma_0: \Lambda^t \tau = 0\}.
&
\end{eqnarray}
We also assume that $c_1\|v\|_0^2\leq (\cA v,v)\leq c_2\|v\|_0^2$ and $\|\Lambda v\|_0 \geq c_3 \|v\|_0$ for $v\in V_0$ with positive constants $c_1$, $c_2$, and $c_3$. Thus we can define the following two norms for $v\in V_0$ and $\tau \in L^2(\O)$ (of which $\Sigma$ is a subset), respectively:
\beq
\tri v \tri = \sqrt{(\cA \Lambda v, \Lambda v)}
\mbox{ and }
\Rnorm{\tau} = \sqrt{(\cA^{-1} \tau, \tau)} \quad \mbox{where } \cA^{-1} \mbox{ is the inverse of }\cA.
\eeq
We also assume the following inf-sup condition holds for some $\beta>0$:
\beq \label{infsup}
\sup_{\tau \in \Sigma} \dfrac{(\Lambda^t \tau,v)}{\|\tau\|_0+\|\Lambda^t \tau\|_0} \geq \beta \|v\|_0, \quad \forall v\in L^2(\O).
\eeq
Consider the {\bf symmetric coercive linear elliptic mixed boundary value problem}:
\beq
\label{ebvp}
\left\{
\begin{array}{lllll}
\Lambda ^t(\cA \Lambda u) = f \mbox{ in }\O,\\[2mm]
\tr_0(u) = g_D \mbox{ on }\Gamma_D\quad \mbox{and}\quad
-\tr_1(\cA \Lambda u) = g_N \mbox{ on }\Gamma_N.
\end{array}
\right.
\eeq
Here, we assume $f$, $g_D$, and $g_N$ are  $L^2$-functions.

\subsection{Primal and dual problems}
Let $u_g\in V$ be a given function with $\tr_0(u_g) =g_D$ on $\G_D$ and
$
V_g=V_0+u_g =
\{v\in V: \tr_0(v) =g_D \mbox{ on }\G_D \}.
$
Test both sides of \eqref{ebvp} by a $v\in V_0$ and use \eqref{ibp}, we have the problem in the weak form.
\begin{defn}[\bf (WP) Weak Primal Problem for \eqref{ebvp}]
\beq \label{ep_weak}
\mbox{Find  } u\in V_g: \quad
a(u,v) = (f,v) - (g_{N}, \tr_0(v))_{\G_N}, \quad \forall v\;\in V_0,
\eeq 
where the bilinear form is defined as
\beq \label{aform}
a(v,w)=(\cA \Lambda v, \Lambda w), \quad  \forall v, w\;\in V. 
\eeq

\end{defn}
In RBM, it is often preferred that the trial and test spaces are the same, so we rewrite the problem (WP) into the following equivalent form.
\begin{defn}{\em\bf ((EWP) Equivalent Weak Primal Problem for \eqref{ebvp})}
\beq \label{ep_weak2}
\mbox{Find  } u=u_0+u_g \mbox{ with } u_0\in V_0 \mbox{ satisfying}: \quad
a(u_0,v) = \varphi(v), \quad \forall v\;\in V_0, 
\eeq 
where the linear form is defined as
\beq \label{def_phi}
\varphi(v)= (f,v) - (g_{N}, \tr_0(v))_{\G_N}-a(u_g, v), \quad \forall v\;\in V. 
\eeq
\end{defn}
We immediately have that the bilinear form $a$ is coercive and continuous with constant being one with respect to $\tri \cdot \tri$ in $V_0$:
\beq \label{one_con_coer}
\tri v \tri^2 =a(v,v) \and a(v,w) \leq \tri v \tri \tri w \tri 
\quad \forall v,w \in V_0.
\eeq
By the Lax-Milgram theorem \cite{Ciarlet:78,Ciarlet:91}, the Problem (WP) and its equivalent Problem (EWP) are well-posed.
By the theory of calculus of variations \cite{ABM:14,Ciarlet:78,Ciarlet:91}, (WP) is the Euler-Lagrange equation of the following minimization problem, and is equivalent to it.
\begin{defn} [\bf (P) Primal Problem of \eqref{ebvp}]
\beq \label{pphi_elliptic}
\mbox{Find } u\in V_g:  
\Jp(u) = \inf_{v\in V_g} \Jp(v), \quad \Jp(v)= \frac{1}{2}a(v,v) - (f,v) +(g_{N}, \tr_0(v))_{\G_N}.
\eeq 
\end{defn}
To introduce the dual problem, define the bilinear form $b$ as:
\beq\label{bform}
b(\rho,\tau)= (\cA^{-1}\rho,\tau) \quad \rho,\tau \in L^2(\O),
\eeq
and define the set $\Sigma_{fg}$ as:
\beq \label{Sigfg}
\Sigma_{fg}=\{\tau \in \Sigma: \Lambda^t \tau = - f \and \tr_1(\tau) = g_N \mbox{  on } \G_N \}.
\eeq
The condition $\Lambda^t \tau = - f$ is often called the dual-feasibility condition.
\begin{defn}[\bf (D) Dual Problem of \eqref{ebvp}]
\beq \label{dphi_elliptic}
\mbox{ Find } \sigma \in \Sigma_{fg}:
\Jd(\sigma) = \sup_{\tau \in \Sigma_{fg}} \Jd(\tau), \quad \Jd(\tau)=  
-\dfrac{1}{2} b(\tau,\tau) -(\tr_1(\tau), g_D)_{\Gamma_D}. 
\eeq 
\end{defn}

\begin{thm} The dual variational problem of \eqref{pphi_elliptic} is \eqref{dphi_elliptic}, and the following two properties hold:
\begin{eqnarray} \label{sd}
\mbox{(strong duality)}&\quad&
\Jp(u) =\Jd(\sigma);\\
\label{relation}
\mbox{(extremal relation)}&\quad&
\sigma = -\cA \Lambda u.
\end{eqnarray}
\end{thm}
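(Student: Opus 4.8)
The plan is to prove all three assertions together by producing the candidate dual solution explicitly and establishing a single quadratic identity (a generalized Prager--Synge identity) relating the two functionals. Let $u\in V_g$ be the primal minimizer, whose existence and uniqueness follows from Lax--Milgram applied to \eqref{ep_weak}, and set $\sigma := -\cA\Lambda u$. The three claims will then fall out of the identity: dual feasibility of $\sigma$ makes it admissible; the identity furnishes weak duality; evaluating it at $(u,\sigma)$ gives strong duality and shows $\sigma$ attains the dual supremum; and strict concavity of $\Jd$ on $\Sigma_{fg}$ turns the defining relation $\sigma=-\cA\Lambda u$ into the claimed extremal relation.

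First I would verify $\sigma\in\Sigma_{fg}$. Testing \eqref{ep_weak} against $v\in V_0$ whose trace vanishes on all of $\partial\O$ gives $(\cA\Lambda u,\Lambda v)=(f,v)$, i.e.\ $\Lambda^t(\cA\Lambda u)=f$ distributionally; since $f\in L^2(\O)$ this shows $\cA\Lambda u\in\Sigma$ and hence $\Lambda^t\sigma=-f$. Feeding this back into the integration by parts formula \eqref{ibp} for a general $v\in V_0$ and comparing with \eqref{ep_weak} isolates $(\tr_1(\cA\Lambda u)+g_N,\tr_0(v))_{\Gamma_N}=0$ for all admissible traces, whence $\tr_1(\sigma)=g_N$ on $\Gamma_N$. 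Thus $\sigma\in\Sigma_{fg}$.

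The core step is the identity
\begin{equation}
\Jp(v)-\Jd(\tau)=\tfrac12\Rnorm{\tau+\cA\Lambda v}^2, \qquad \forall\, v\in V_g,\ \tau\in\Sigma_{fg}.
\end{equation}
To derive it, apply \eqref{ibp} to $(v,\tau)\in V_g\times\Sigma_{fg}$ and use $\Lambda^t\tau=-f$ to obtain $(\tau,\Lambda v)+(f,v)=(\tr_1(\tau),\tr_0(v))_{\partial\O}$. Splitting the boundary integral over $\Gamma_D\cup\Gamma_N$ and invoking the two constraints $\tr_0(v)=g_D$ on $\Gamma_D$ (from $v\in V_g$) and $\tr_1(\tau)=g_N$ on $\Gamma_N$ (from $\tau\in\Sigma_{fg}$) gives $(g_N,\tr_0(v))_{\Gamma_N}+(\tr_1(\tau),g_D)_{\Gamma_D}=(\tau,\Lambda v)+(f,v)$. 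Substituting into $\Jp(v)-\Jd(\tau)$ cancels the $\pm(f,v)$ terms together with all boundary terms, leaving $\tfrac12 a(v,v)+\tfrac12 b(\tau,\tau)+(\tau,\Lambda v)$; completing the square with $\cA^{-1}$ self-adjoint, so that $(\cA^{-1}\tau,\cA\Lambda v)=(\tau,\Lambda v)$, reproduces the right-hand side.

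Finally, the identity is nonnegative, so $\Jd(\tau)\le\Jp(v)$ for every admissible pair, i.e.\ weak duality. Taking $v=u$ and $\tau=\sigma=-\cA\Lambda u$ makes the argument $\tau+\cA\Lambda v=\sigma+\cA\Lambda u$ vanish, so the right-hand side is zero; this yields the strong duality $\Jp(u)=\Jd(\sigma)$ and simultaneously shows $\Jd(\sigma)=\Jp(u)\ge\Jd(\tau)$ for all $\tau\in\Sigma_{fg}$, so $\sigma$ solves \eqref{dphi_elliptic}. Since $\Jd$ is strictly concave on the affine set $\Sigma_{fg}$ (the term $-\tfrac12 b(\tau,\tau)=-\tfrac12\Rnorm{\tau}^2$ is strictly concave and the remaining term is affine in $\tau$), the maximizer is unique, so the dual solution is exactly $\sigma=-\cA\Lambda u$, the extremal relation. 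I expect the only delicate point to be the boundary bookkeeping in the two preceding steps: one must justify that the weak formulation forces $\tr_1(\sigma)=g_N$ on $\Gamma_N$ (which relies on the traces of $V_0$ ranging over a rich enough set on $\Gamma_N$) and keep careful track of which constraint annihilates which boundary term. An alternative route would be to identify $F$, $G$, and $\Lambda$ in the abstract setting and invoke Theorem \ref{pdframeworkthm}, but the direct identity above is cleaner and delivers the Prager--Synge representation as a byproduct.
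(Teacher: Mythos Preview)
Your proof is correct, but it takes a genuinely different route from either of the paper's two proofs. The paper offers (i) a proof via the conjugate convex function framework, computing $F^*(\Lambda^*\tau)$ and $G^*$ explicitly and invoking Theorem~\ref{pdframeworkthm} for strong duality and the extremal relations~\eqref{dr1}--\eqref{dr2}; and (ii) a proof via the Lagrangian \eqref{lag}, computing $\sup_\tau\cL$ and $\inf_v\cL$ and then sandwiching $\Jp(u)=\Jd(\sigma)$ by the saddle-point inequalities. You instead bypass both abstract frameworks: you verify directly from the weak formulation that $\sigma:=-\cA\Lambda u\in\Sigma_{fg}$, prove the Prager--Synge identity $\Jp(v)-\Jd(\tau)=\tfrac12\Rnorm{\tau+\cA\Lambda v}^2$ for every admissible pair, and read off weak duality, strong duality, attainment, and (via strict concavity) the extremal relation all at once. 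This is essentially the computation the paper carries out later in Theorem~3.9 for the error relations, pulled forward to do the work of the present theorem as well. Your approach is more elementary and self-contained (no Section~2 machinery needed) and yields the Prager--Synge identity as a byproduct; the paper's approaches, by contrast, emphasize how this problem instantiates the general primal-dual structure of Section~2, which pays off when treating the further examples in Section~6. As a minor overlap, the paper's Lagrangian proof does share your key step of substituting $-\cA\Lambda u$ into $\Jd$ and using integration by parts, but frames it inside the saddle-point argument rather than as a freestanding quadratic identity.
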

Two proofs are presented for the theorem. 
\begin{proof} 
\noindent {\bf (Conjugate convex function theory)} 
Set $Y=V$ and $Q=Q^* = L^2(\O)$, then the functional $\Jp(v)$ can be written as $F(v)+G(\Lambda v)$ with
\beq
F(v) =-(f,v)+ (g_{N}, \tr_0(v))_{\G_N}
\and
G(q) = \dfrac{1}{2}(\cA q,q),
\eeq 
for $ v\in dom(F)=  V_g$ and $q \in L^2(\O)$.
By the definition of $F^*$ and 
\eqref{ibp}, we have
\begin{eqnarray*}
F^*(\Lambda^* \tau) &=& \sup_{v\in \mbox{dom}(F)}\{(\Lambda v,\tau)-F(v)\} \\
&=& 
\sup_{v\in V_g}\{ (\Lambda^t \tau, v) + (\tr_1(\tau), \tr_0(v))_{\Gamma_N}+(f,v)- (g_{N}, \tr_0(v))_{\G_N}\} \\
 &=& 
\sup_{v\in V_g}\{ (\Lambda^t \tau +f, v) + (\tr_1(\tau)-g_N, \tr_0(v))_{\Gamma_N}+ (\tr_1(\tau), g_D)_{\G_D}\}.
\end{eqnarray*}
The constraint set is $
\{\tau \in L^2(\O): (\Lambda^t \tau +f, v) + (\tr_1(\tau)-g_N, \tr_0(v))_{\Gamma_N} =0, \mbox{ for } v \in V_g \}.
$
Since $f$ and $g_N$ are $L^2$ functions, the set is just $\Sigma_{fg}$ in \eqref{Sigfg}. Then, 
\begin{eqnarray*}
F^*(\Lambda^* \tau) &=& 
\left\{ \begin{array}{lc} (\tr_1(\tau),\ g_D)_{\G_D} & \mbox{if } \tau \in \Sigma_{fg}, \\[2mm]
+\infty & \mbox{otherwise.}
\end{array}
\right.
\end{eqnarray*}
A simple calculation (see e.g. \cite{Han:05,RW:98}) shows that 
$
G^*(\tau) = \dfrac{1}{2} (\cA^{-1}\tau,\tau).
$
Following \eqref{dphi}, we have the dual problem \eqref{dphi_elliptic}.
The strong duality \eqref{sd} follows from Theorem \ref{pdframeworkthm}.


Note that from the second duality relation \eqref{dr2}, we have 
\beq \label{usigma}
0=
(\cA\Lambda u, \Lambda u) + (\cA^{-1}\sigma, \sigma) + 2(\sigma, \Lambda u) = \|\cA^{1/2}\Lambda u+\cA^{-1/2}\sigma\|_0^2.
\eeq
Then \eqref{relation} is proved.
\end{proof}
\begin{proof}\noindent {\bf (Lagrangian/saddle-point theory)}
On the set $V_g \times L^2(\O)$, define the following Lagrangian (the choice is made since we know the relation \eqref{relation} for many problems with physical understandings):
\beq \label{lag}
\cL(v,\tau) = \Jp(v) - \frac{1}{2} \|\cA^{1/2} \Lambda v+ \cA^{-1/2} \tau\|_0^2, \quad (v,\btau) \in V_g\times L^2(\O).
\eeq
Clearly, we have 
$\sup_{\tau \in L^2(\O)} \cL(v, \tau) = \cL(v, -\cA\Lambda v)	= \Jp(v)$.
A simple calculation shows that
$$
\cL(v,\tau) = -\frac{1}{2}(\cA^{-1}\tau,\tau) - (\Lambda v,\tau) - (f,v) +(g_N, \tr_0(v))_{\Gamma_N},
$$
which is exactly \eqref{L}. With the integration by parts,  we have
$
\cL(v,\tau) =  -\frac{1}{2}(\cA^{-1}\tau,\tau) - (\Lambda^t\tau+f, v) +(g_N-\tr_1(\tau), \tr_0(v))_{\Gamma_N}-(\tr_1(\tau), g_D)_{\Gamma_D}$,
which implies
\beq
\inf_{v \in  V_g} \cL(v,\tau) = \left\{
\begin{array}{llll}
	-\frac{1}{2}(\cA^{-1}\tau,\tau)-(\tr_1(\tau), g_D)_{\Gamma_D},
&\mbox{if  }  \tau \in \Sigma_{fg},\\[2mm]
	- \infty, &\mbox{otherwise}. & 
\end{array}
\right.
\eeq
So the dual functional and the dual problem \eqref{dphi_elliptic} are derived.

To show \eqref{relation} via the saddle-point theory, we prove the strong duality \eqref{sd} first. Substituting $-\cA \Lambda u\in \Sigma_{fg}$ into $\Jd$ and using \eqref{ibp}, we have  
$\Jd(\sigma)\geq \Jd(-\cA \Lambda u) =\Jp(u)$. 
On the other hand, by the definition of primal and dual functionals \eqref{pp1} and \eqref{pd1} from $\cL$, 
$\Jd(\sigma)  \leq \cL(u,\sigma) \leq \Jp(u)$ is true.
Thus, $\Jp(u) = \Jd(\sigma)$. Substituting into definition and use integration by parts, we also get \eqref{usigma}, thus  \eqref{relation}.
As a byproduct, we have $(u,\sigma)$ is a saddle-point of $\cL$,
\beq \label{saddlepoint}
	\inf_{v\in V_g} \Jp(v)= \Jp(u) =\cL(u, \sigma)= \Jd(\sigma) 
	= \sup_{\tau\in \Sigma_{fg}} \Jd(\tau).
\eeq
\end{proof}

We also rewrite the dual problem (D) \eqref{dphi_elliptic} into the weak form.
\begin{defn}
[\bf (WD) Weak Dual Problem of \eqref{ebvp}] 
\beq \label{ed_weak}
\mbox{Find } \sigma \in \Sigma_{fg}:\quad
b(\sigma, \tau) = -(\tr_1(\tau), g_D)_{\Gamma_D},
\quad \forall \tau \;\in \Sigma_{00},
\eeq 
\end{defn}
where the space $\Sigma_{00}$ is defined in \eqref{def_S0_S00}.
We have $\Sigma_{fg} = \Sigma_{00}+\sigma_{fg}$, where $\sigma_{fg}\in \Sigma_{fg}$ is a given function. 
Similar to the primal problem, in order to develop RBM, we introduce the following equivalent form with the same trial and test spaces.
\begin{defn}
[\bf (EWD) Equivalent Weak Dual Problem of \eqref{ebvp}]
\beq \label{ed_weak_2}
\mbox{Find  } \sigma=\sigma_{00}+\sigma_{fg} \mbox{ with } \sigma_{00}\in \Sigma_{00} \mbox{ satisfying}: \;
b(\sigma_{00},\tau) = \psi(\tau), \; \forall \tau\;\in \Sigma_{00}, 
\eeq 
where the linear form is defined as
\beq \label{def_psi}
\psi(\tau)= -(\tr_1(\tau), g_D)_{\Gamma_D}-b(\sigma_{fg}, \tau), \quad \forall \tau\;\in \Sigma. 
\eeq
\end{defn}
The following coercivity and continuity of $b$ with constant being one are true:
\beq \label{one_con_coer_dual}
\Rnorm{\tau}^2 = b(\tau,\tau)\and b(\rho,\tau) \leq \Rnorm{\rho}\Rnorm{\tau}, \quad \forall \rho, \tau \in \Sigma_{00}.
\eeq
The equivalence of the Problems (D) and (WD) is also standard. The existence and uniqueness of the Problem (EWD) (thus (WD) and (D)) follow from \eqref{one_con_coer_dual} and the Lax-Milgram theorem. 

Since the dual problem (D) is a constraint minimization problem, we can also write its weak form as an equivalent saddle-point problem (or a mixed method, see \cite{BBF:13,Arnold:90}). Let 
\beq
c((\rho, w), (\tau,v))= (\cA^{-1} \rho, \tau) + (\Lambda^t \tau, w) +(\Lambda^t \rho, v)\  \mbox{ for  } (\rho, w), (\tau,v) \in \Sigma\times L^2(\O).
\eeq
Like before, we have an equivalent version with the same trial and test spaces. 
\begin{defn}
[\bf (ESDD) Equivalent Saddle-Point Dual Problem of \eqref{ebvp}]
Find 
$\sigma = \sigma_{00}+\sigma_{fg}$, such that $(\sigma_{00},w) \in \Sigma_0\times L^2(\O)$ satisfying
\beq \label{ed_weak_4}
c((\sigma_{00},w), (\tau,v)) = \psi(\tau) \quad \forall (\tau,v) \in \Sigma_0\times L^2(\O).
\eeq
\end{defn}
Choose $v=0$ in \eqref{ed_weak_4}, we get $\Lambda^t \sigma_{00}=0$ in the strong sense (since $\Lambda^t \Sigma\subset L^2(\O)$ by the definition of $\Sigma$). Combined with the fact $\sigma_{00}\in \Sigma_0$, we have $\sigma_{00}\in \Sigma_{00}$.

With the assumption \eqref{infsup}, the existence, uniqueness, and stability of the formulation (ESSD) are the results of the standard Babuska-Brezzi theory, see \cite{BBF:13}. Also, the solution $\sigma$ of Problem (ESDD) is the solution $\sigma$ of Problem (WD), and vice versa.

\subsection{Error relations and generalized Prager-Synge identity}
Defined the combined norm for the primal-dual problem:
\beq
\cnorm{(v,\tau)} = \sqrt{\tri v \tri^2 + \Rnorm{\tau}^2}, \quad v\in V_0, \tau\in \Sigma_{00}.
\eeq
We then have the following important {\em combined error relation} or {\em generalized Prager-Synge identity} \eqref{PS3} (see also \cite{PS:47,Synge:57,BS:08,CZ:12}), which is Proposition \ref{error} for this specific problem. The energy differences are now squares of respective error in energy norms.
\begin{thm} Let $u$ and $\sigma$ be the solutions of the prime problem (P) \eqref{pphi_elliptic} and the dual problem (D) \eqref{dphi_elliptic}, respectively. Then for any $v\in V_g$ and $\tau \in \Sigma_{fg}$, the following error relations hold:
\begin{eqnarray}
\label{PS1} (primal)\;
\tri u-v \tri^2  &=& 2(\Jp(v)-\Jp(u)) \leq 2 (\Jp(v) -\Jd(\tau)), \\
\label{PS2} (dual)\;
\Rnorm{\sigma-\tau}^2 &=& 2(\Jd(\sigma)-\Jd(\tau)) \leq 2 (\Jp(v) -\Jd(\tau)),\\
\label{PS3} (combined)\;
\cnorm{(u-v,\sigma-\tau)}^2
&=& 2 (\Jp(v) -\Jd(\tau))= \|\cA^{1/2}\Lambda v+\cA^{-1/2}\tau\|_0^2.
\end{eqnarray}
\end{thm}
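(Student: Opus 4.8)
The plan is to establish the two equalities in \eqref{PS1} and \eqref{PS2} separately by the standard quadratic-expansion argument, to combine them via strong duality to obtain the middle equality of \eqref{PS3}, and finally to verify the closed-form expression on the right of \eqref{PS3} by a direct computation together with the integration by parts formula \eqref{ibp}.

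First I would prove the primal equality. Since $u,v\in V_g$, the difference $w:=v-u$ lies in $V_0$. Expanding the quadratic functional and using the bilinearity of $a$ gives $\Jp(v)-\Jp(u)=a(u,w)+\tfrac12 a(w,w)-(f,w)+(g_{N},\tr_0(w))_{\Gamma_N}$. Because $w\in V_0$, the weak primal problem \eqref{ep_weak} says precisely that $a(u,w)-(f,w)+(g_{N},\tr_0(w))_{\Gamma_N}=0$, so the first-order term cancels and only $\tfrac12 a(w,w)=\tfrac12\tri w\tri^2$ survives, which is the asserted identity $\tri u-v\tri^2=2(\Jp(v)-\Jp(u))$. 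The inequality then follows from $\Jp(u)=\Jd(\sigma)\ge \Jd(\tau)$, where the equality is strong duality \eqref{sd} and the inequality holds because $\tau\in\Sigma_{fg}$ while $\sigma$ maximizes $\Jd$.

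The dual equality is entirely parallel. Here $\sigma,\tau\in\Sigma_{fg}$, so $\zeta:=\sigma-\tau\in\Sigma_{00}$, and expanding $\Jd$ gives $\Jd(\sigma)-\Jd(\tau)=-b(\sigma,\zeta)+\tfrac12 b(\zeta,\zeta)-(\tr_1(\zeta),g_D)_{\Gamma_D}$. The weak dual problem \eqref{ed_weak} states $b(\sigma,\zeta)=-(\tr_1(\zeta),g_D)_{\Gamma_D}$ for $\zeta\in\Sigma_{00}$, which cancels the first-order contribution and leaves $\tfrac12\Rnorm{\zeta}^2$, giving \eqref{PS2}; the inequality uses $\Jd(\sigma)=\Jp(u)\le \Jp(v)$. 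Adding \eqref{PS1} and \eqref{PS2} and invoking strong duality $\Jp(u)=\Jd(\sigma)$ to cancel the $-\Jp(u)+\Jd(\sigma)$ contribution yields the middle equality $\cnorm{(u-v,\sigma-\tau)}^2=2(\Jp(v)-\Jd(\tau))$.

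The step I expect to require the most care is the final closed-form identity $2(\Jp(v)-\Jd(\tau))=\|\cA^{1/2}\Lambda v+\cA^{-1/2}\tau\|_0^2$, since it is the only place where all boundary and load data must be matched at once. I would expand the right-hand side using the self-adjointness of $\cA^{1/2}$ into $\tfrac12(\cA\Lambda v,\Lambda v)+(\Lambda v,\tau)+\tfrac12(\cA^{-1}\tau,\tau)$, and separately expand $\Jp(v)-\Jd(\tau)$ into its two quadratic terms plus the data terms $-(f,v)+(g_N,\tr_0(v))_{\Gamma_N}+(\tr_1(\tau),g_D)_{\Gamma_D}$. Matching the two then reduces to showing that this combination of data terms equals the cross term $(\Lambda v,\tau)$. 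This is exactly where \eqref{ibp} enters: Green's formula rewrites $(\Lambda v,\tau)$ as $(\Lambda^t\tau,v)$ plus the full boundary term $(\tr_1(\tau),\tr_0(v))_{\partial\Omega}$, and then the defining constraints of $\Sigma_{fg}$, namely $\Lambda^t\tau=-f$ and $\tr_1(\tau)=g_N$ on $\Gamma_N$, together with $v\in V_g$ so that $\tr_0(v)=g_D$ on $\Gamma_D$, convert the interior and boundary pieces into precisely the required data terms. The main obstacle is thus bookkeeping: correctly splitting $\partial\Omega$ into $\Gamma_D$ and $\Gamma_N$ and applying the appropriate constraint on each piece, rather than any conceptual difficulty.
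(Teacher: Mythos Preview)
Your proposal is correct and follows essentially the same line as the paper's proof: both use the quadratic expansion together with the weak primal equation \eqref{ep_weak} and the weak dual equation \eqref{ed_weak} to kill the first-order terms, invoke strong duality \eqref{sd} for the inequalities and the combined identity, and then verify the closed form in \eqref{PS3} by integration by parts \eqref{ibp} together with the constraints defining $V_g$ and $\Sigma_{fg}$. One cosmetic slip: when you ``expand the right-hand side'' you actually wrote down half of $\|\cA^{1/2}\Lambda v+\cA^{-1/2}\tau\|_0^2$; this matches $\Jp(v)-\Jd(\tau)$ rather than $2(\Jp(v)-\Jd(\tau))$, so the bookkeeping is consistent once the missing factor of $2$ is restored.
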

\begin{proof}
In the sprit of Proposition \ref{error}, we only need to prove $\tri u-v \tri^2  = 2(\Jp(v)-\Jp(u))$, $\Rnorm{\sigma-\tau}^2 = 2(\Jd(\sigma)-\Jd(\tau))$, and $2 (\Jp(v) -\Jd(\tau))= \|\cA^{1/2}\Lambda v+\cA^{-1/2}\tau\|_0^2$.

By the definition of $\Jp$, we have
\begin{eqnarray*}
2(\Jp(v)-\Jp(u)) &=& \|\cA^{1/2} \Lambda v\|_0^2- \|\cA^{1/2} \Lambda u\|_0^2
	- 	2(f,v-u) + 2(g_N, \tr_0(v-u))_{\Gamma_N}\\[1mm]
&=& (\cA \Lambda(u+v), \Lambda (v-u)) - 2(f,v-u)+ 2(g_N, \tr_0(v-u))_{\Gamma_N}.
\end{eqnarray*}
By \eqref{ep_weak}, $(\cA \Lambda u,\Lambda (v-u)) =(f,v-u) -(g_N, \tr_0(v-u))_{\Gamma_N}$, for all $v-u\in V_0$,  then
$$ 
\tri u-v \tri^2  = (\cA \Lambda(u+v),\Lambda (v-u)) - 2(\cA \Lambda u,\Lambda (v-u))
= 2(\Jp(v)-\Jp(u)).
$$
By the definition of $\Jd$, we have
\begin{eqnarray*}
2(\Jd(\sigma)-\Jd(\tau)) = \|\cA^{-1/2} \tau \|_0^2- \|\cA^{-1/2} \sigma\|_0^2 + 	2(\tr_1(\tau-\sigma), g_D)_{\Gamma_D}.
\end{eqnarray*}
By \eqref{ed_weak},
$(\cA^{-1} \sigma, (\tau-\sigma)) = -(\tr_1(\tau-\sigma), g_D)_{\Gamma_D}$, for all $\tau-\sigma \in \Sigma_{00}$, then
$$
\Rnorm{\sigma-\tau}^2 = (\cA^{-1}(\tau+\sigma), (\tau-\sigma)) - 2(\cA^{-1} \sigma, (\tau-\sigma))
= 2(\Jd(\sigma)-\Jd(\tau)).
$$
The second identity in \eqref{PS3} can be proved by the integration by parts,
\begin{eqnarray*}
 2(\Jp(v)-\Jd(\tau)) &=& \tri v \tri^2 +\Rnorm{\tau}^2 - 	2(f,v) + 2(g_N, \tr_0(v))_{\Gamma_N} + 	2(\tr_1(\tau), g_D)_{\Gamma_D} \\
 &=& \tri v \tri^2 +\Rnorm{\tau}^2 + 2 (\Lambda v, \tau) = \|\cA^{1/2}\Lambda v+\cA^{-1/2}\tau\|_0^2.
\end{eqnarray*}
\end{proof}

%


\section{FEMs for Symmetric Coercive Linear Elliptic Mixed Boundary Value Problem}
\setcounter{equation}{0}

Let $V_h$ be a FE subspace of $V$, $D_h$ be a FE subspace of $L^2(\O)$, $\Sigma_h$ be a FE subspace of $\Sigma$ satisfying $\Lambda^t \Sigma_h = D_h$. We assume the pair $\Sigma_h$ and $D_h$ also satisfies a discrete inf-sup condition. Examples of such FE spaces can be found in Section 6.
To simplify our presentation, we make some assumption of the data.

\begin{asm}\label{asss}
We assume that $g_D$ is in the range of $\tr_0$ on $V_h$, $g_N$ is in the range of $\tr_1$ on $\Sigma_h$, and $f$ is in $D_h$.
\end{asm}
\begin{rem} \label{dataH1}
For the case that the boundary data and right-hand side not satisfying Assumption \ref{asss}, we can replace $g_D$, $g_N$, and $f$ by their $L^2$-projections  (or interpolations)  $g_{D,h}$, $g_{N,h}$, and $f_h$ in the corresponding discrete spaces. Since the PDE and the discrete problems are stable, it is easy to show that the difference of the solutions with the original data and the projected/interpolated data are in the same order of the data perturbations, and can be neglected if the mesh is reasonable fine, or can be added to the error estimator as an extra term, see p.181 of \cite{Braess:07} for the discussion on $f$ in mixed formulations and \cite{BCD:04} for analysis on approximations of inhomogeneous Dirichlet boundary conditions. This issue is often not discussed in the RBM since the classic RBM often assumes that the FE solution is a very accurate "truth" solution and these small errors are not worried about.
\end{rem}

Since basis functions of RBM can only be homogeneous functions (in the sense that boundary condition is zero and the constraint condition is homogeneous), we use two equivalent versions of problems (EWP) and (ESDD) with the same trial and dual spaces to find FE solutions.

Let $V_{0,h} = V_h\cap V_0$. We have $u_g\in V_h$ due to Assumption \ref{asss}.
\begin{defn}[\bf FEM for the primal problem (P)] \label{dfn_pfem}
The FEM of the primal problem \eqref{pphi_elliptic} is: 
\beq \label{ep_weak2_fem}
\mbox{Find  } u_h=u_{0,h}+u_g \mbox{ with } u_{0,h}\in V_{0,h} \mbox{ satisfying}: \;
a(u_{0,h},v) = \varphi(v), \; \forall v\;\in V_{0,h}.
\eeq
\end{defn} 
By the standard arguments of FE analysis (for example, Theorem 0.3.3 of \cite{BrSc:08}), \eqref{one_con_coer}, and the fact $u_h=u_{0,h}+u_g$ with $u_{0,h}\in V_{0,h}$, we have the following identity:
\beq \label{ap_primalfem2}
\tri u - u_h\tri = \inf_{v_{0,h}\in V_{0,h}} \tri u-u_g- v_{0,h}\tri.
\eeq
For the FE approximation of the dual problem, the saddle-point formulation is used.
Let $\Sigma_{0,h} = \Sigma_{h} \cap \Sigma_{0}$ and $\Sigma_{00,h} = \Sigma_{h} \cap \Sigma_{00}$. We have $\sigma_{fg}\in \Sigma_h$ due to Assumption \ref{asss}.


\begin{defn}[\bf FEM for the dual problem (D)]
The FEM of the dual problem \eqref{dphi_elliptic} is:
Find $\sigma_h = \sigma_{00,h}+\sigma_{fg}$ with $(\sigma_{00,h},w_h) \in \Sigma_{0,h}\times D_h$ satisfying
\beq \label{fem_d}
c((\sigma_{00,h},w_h), (\tau,v)) = \psi(\tau) \quad \forall (\tau,v) \in \Sigma_{0,h}\times D_h.
\eeq 
\end{defn}
Let $\tau=0$ in \eqref{fem_d} and by the assumption that $\Lambda^t \Sigma_h = D_h$, we have $\Lambda^t\sigma_{00,h} = 0$ in the strong sense, thus 
\beq \label{in00}
\sigma_{00,h} \in \Sigma_{00} \and \sigma_h = \sigma_{00,h}+\sigma_{fg}\in \Sigma_{fg}.
\eeq
By choosing $\tau\in \Sigma_{00}$ and $v=0$ in \eqref{fem_d}, it is easy to check that the solution $\sigma_{00,h}$ of \eqref{fem_d} is also the solution of the following discrete (EWD) problem:
\beq 
b(\sigma_{00,h},\tau) = \psi(\tau), \quad \forall \tau\;\in \Sigma_{00,h}. 
\eeq 
Similarly, we have the best approximation property of the dual FE solution with constant being one due to \eqref{one_con_coer_dual}:
\beq \label{ap_dualfem}
\Rnorm{\sigma - \sigma_h} = \inf_{\tau_{00,h}\in \Sigma_{00,h}} \Rnorm{ \sigma-\sigma_{fg}- \tau_{00,h}}.
\eeq
A standard FEM often tries to find $\sigma_h = \sigma_{0,h}+\sigma_g$, where $\sigma_g$ satisfies the boundary condition on $\Gamma_N$ but not the dual- feasibility condition, and $\sigma_{0,h}$ satisfies zero boundary condition on $\Gamma_N$. We choose to find $\sigma_{00,h}$ here directly due to this is the requirement of a RB basis function. An efficient construction of $\sigma_{fg}$ can be found in Section 5.

\subsection{Combined FE error estimate}
Viewing the primal-dual problem as one combined problem, from \eqref{ap_primalfem2} and \eqref{ap_dualfem}, we have the following best approximation in the combined norm with constant being one:
\beq \label{combined_apriori}
\cnorm{(u-u_h,\sigma-\sigma_h)}= \inf_{(v_{0,h},\tau_{00,h})\in V_{0,h}\times \Sigma_{00,h}}\cnorm{(u-u_g-v_{0,h},\sigma-\sigma_{fg}- \tau_{00,h})}
\eeq
This also suggests that we should choose the approximation order properly for the primal and dual FE spaces so that they have the same order of convergence.
 
Define the PD-FE global a posteriori error estimator and local indicator, respectively: 
\begin{eqnarray*}
\eta_{h}(u_h,\sigma_h) &=& \sqrt{2(\Jp(u_h) - \Jd(\sigma_h)} = \|\cA^{1/2}\Lambda u_h+\cA^{-1/2}\sigma_h\|_0,\\[2mm]
\eta_{h,T}(u_h,\bsigma_h) &=&\|\cA^{1/2}\Lambda u_h+\cA^{-1/2}\sigma_h\|_{0,T}, \quad\forall T\in \cT.
\end{eqnarray*}
By Theorem 3.9, $\eta_h$ is exact (reliable, efficient, and robust with constant being one) with respect to the error in the combined energy norm, also, $\eta_h$ can be used as a reliable and robust error estimator for the individual primal and dual FE problems.

\section{PD-RBM for the Parametric Symmetric Coercive  Linear Elliptic Mixed Boundary Value Problem}
\setcounter{equation}{0}

In this section, we develop PD-RBM for the following {\bf parametric symmetric  coercive linear elliptic mixed boundary value problem}:
\beq
\label{p_ebvp}
\left\{
\begin{array}{lllll}
\Lambda ^t(\cA(\mu) \Lambda u(\mu)) = f(\mu) \mbox{ in }\O,\\[2mm]
\tr_0(u(\mu)) = g_D(\mu) \mbox{ on }\Gamma_D\quad \mbox{and}\quad
-\tr_1(\cA(\mu)\Lambda u(\mu)) = g_N(\mu) \mbox{ on }\Gamma_N,
\end{array}
\right.
\eeq
where $\mu\in \cD$. $\cD$ is a parametric domain.
In this paper, we assume that $\cA$, $f$, $g_D$, and $g_N$ are $\mu$-dependent. Thus, we use $\mu$-dependent forms, functional, norms, and problems. The meanings of these notations are self-explicit.
We also suppose that a set of $N$ parameters $S_N =\{\mu_i\}_{i=1}^N$ is chosen.

\subsection{RBM for the primal problem (P)}
The RB space for the primal problem (P) is the span of the corresponding homogeneous FE solutions (by the primal FEM in Definition \ref{dfn_pfem}): $
V_{\rb}^N = \mbox{span}\{ u_{0,h}(\mu_i)\}_{i=1}^N \subset V_{0,h},
$
\beq 
u_{0,h}(\mu_i)\in V_{0,h} \mbox{ satisfying}: \quad
a(u_{0,h},v;\mu_i) = \varphi(v;\mu_i), \quad \forall v\;\in V_{0,h}.
\eeq
Assume that a $u_g(\mu) \in V_g(\mu)$ can be constructed efficiently (see Section 5.3), then the primal RB problem is: for any given $\mu \in \cD$, find $u_{\rb}^N(\mu) 
= u_{0,\rb}^N(\mu) + u_g(\mu)$ with $u_{0,\rb}^N(\mu) \in V_{\rb}^N $ satisfying
\beq \label{ep_weak2_rb}
	a(u_{0,\rb}^N(\mu), v_{\rb};\mu) = \varphi(v_{\rb};\mu),\quad \forall v_{\rb} \in V_{\rb}^N.
\eeq
Similar to the FE case, we have the following best approximation estimate with constant being one for any given $\mu\in \cD$ with respect to the $\mu$-dependent energy norm for the primal RB approximation:
\beq\label{ap_primalrb}
\tri u(\mu) - u_{\rb}^N(\mu)\tri_{\mu} = \inf_{v_{\rb}\in V_{\rb}^N} \tri u(\mu) -u_g(\mu)- v_{\rb}\tri_{\mu}.
\eeq

\subsection{RBM for the dual problem (D)}
The RB space for the dual problem (D) is the span of the corresponding homogeneous FE solutions: $\Sigma_{\rb}^N = \mbox{span}\{\sigma_{00,h}(\mu_i)\}_{i=1}^N \subset \Sigma_{00,h}$, where 
$(\sigma_{00,h}(\mu_i),w_h(\mu_i)) \in \Sigma_{0,h}\times D_h$ satisfying
\beq
c((\sigma_{00,h},,w_h(\mu_i)), (\tau,v);\mu_i) = \psi(\tau;\mu_i) \quad \forall (\tau,v) \in \Sigma_{0,h}\times D_h.
\eeq
The solution $\sigma_{00,h}(\mu_i)\in \Sigma_{00,h}\subset \Sigma_{0,h}$, due to \eqref{in00}. 

Assume that a function $\sigma_{fg}(\mu) \in \Sigma_{fg}(\mu)$ is constructed (see Section 5.3), the dual RB approximation is: for any given $\mu \in \cD$, find $\sigma_{\rb}^N(\mu) 
= \sigma_{00,\rb}^N(\mu) +\sigma_{fg}(\mu)$ with $\sigma_{00,\rb}^N(\mu) \in \Sigma_{\rb}^N $ satisfying
\beq \label{rbm_dual}
	b(\sigma_{00,\rb}^N(\mu), \tau_{\rb};\mu) = \psi(\tau_{\rb};\mu),\quad \forall \tau_{\rb} \in \Sigma_{\rb}^N.
\eeq 
With respect to the $\mu$-dependent energy norm, we have the following best approximation estimate with constant being one for any given $\mu\in \cD$:
\beq \label{ap_dualrb}
\Rnorm{\sigma(\mu) - \sigma_{\rb}^N(\mu)}_{\mu} = \inf_{\tau_{00,\rb}\in \Sigma_{\rb}^N} \Rnorm{\sigma(\mu) - \sigma_{fg}(\mu)- \tau_{00,\rb}}_{\mu} .
\eeq
\begin{rem}
It should be emphasized that for the dual FE problem, we need to use the saddle point formulation (ESDD), since a FE subspace of $\Sigma_{00}$ is often hard, if not impossible, to be constructed. On the other hand, for the dual RB problem, the basis function are already in $\Sigma_{00}$ (the fact that the solution of \eqref{fem_d} satisfies \eqref{in00} in the strong sense is  discussed in p.182 of \cite{Braess:07} for the hypercircle error estimator and is well-known for the mixed FEM), thus we should use the simpler formulation (EWD). 
\end{rem}

\begin{rem}
In \cite{Yano:15}, the dual solution is not solved as a global FE problem (i.e., a global optimization problem in the corresponding FE space), but is recovered though a local post-processing procedure. In our algorithm, we suggest to solve the dual FE approximation directly. This is due to three reasons: the first reason is that the quality of the a posteriori error estimator is based on how tight the primal-dual gap is. To make the primal-dual gap tight, we should solve the dual problem globally for both FEM and RBM. Also, to ensure the error estimates are robust, simple explicit construction often leads to non-robust results, see \cite{Ver:09,CZ:12}. 
The second reason is that for the standard FE a posteriori error estimators, we do not want the error estimator to be as expensive as the original problem. While in the RB scenario, all these FE steps are offline steps. A good quality of the RB set is more important than some computational cost of finding extra solutions of offline problems. Plus, our greedy algorithms developed in the next section will balance the RB tolerance and FE approximation errors to make sure that the FE problem is solved in an optimal mesh and we do not over-compute too many basis functions.  The third reason is that computing both the primal and dual unknowns accurately is very common in the least-squares approach, both in the FEM \cite{BG:09} and in the RBM \cite{Yano:16,Yano:18}.

Our dual RB problem is also different from the dual RB problem for the diffusion equation in \cite{Yano:15} where a saddle-point (constraint minimization) problem is considered. We believe our approach here is more appropriate since our version is based on a standard weighted projection. In fact, it uses the same trick as handling non-homogeneous boundary condition in primal RBM. Also,	our approach here is also easier to show the best approximation properties of the RB space.  Compared to \cite{Yano:18}, the method developed here can be viewed as an optimal case of choosing the penalty parameter $\delta=0$.
\end{rem}

\begin{rem}
In RBM, a Gram-Scmidt orthonormalization is often performed to increase the algebraic stability. Since the  orthonormalization has to be based on a fixed inner product, the parameter-dependent energy inner product can not be used. The condition number of the discrete system is still bounded by the ratio of the continuity and stability constants, as stated in Prop. 2.17 of \cite{Ha:17}. In our numerical experiments, since the number of basis functions is not big, the Gram-Scmidt orthonormalization is not used.
\end{rem}

\subsection{Efficient offline-online decomposition}
To ensure an efficient offline-online decomposition and efficient constructions of $u_g(\mu)$ and $\sigma_{fg}(\mu)$, we need to assume the following parameter separability. 

\begin{asm} [{\bf Parameter separability/Affine decomposition}] We assume the forms $a$ and $b$ to be parameter separable, i.e., there exist coefficient functions $\theta_q^a(\mu): \cD \to \mathbb{R}$ for $q=1,\cdots, Q_a$ and 
$\theta_q^b(\mu): \cD \to \mathbb{R}$ for $q=1,\cdots, Q_b$
with $Q_a$ and $Q_b \in \mathbb{N}$ and parameter-independent continuous bilinear forms
$a_q(\cdot, \cdot):V\times V\to \mathbb{R}$ and 
$b_q(\cdot, \cdot):\Sigma \times \Sigma \to \mathbb{R}$
such that
\begin{eqnarray}
a(w,v;\mu) &=& \sum_{q=1}^{Q_a}\theta^q_a(\mu)a^q(v,w), \quad \mu \in \cD, v,w\in V, \\
\mbox{and}\quad
b(\rho,\tau;\mu) &=& \sum_{q=1}^{Q_b}\theta^q_b(\mu)b^q(\rho,\tau), \quad \mu \in \cD, \rho, \tau\in \Sigma.
\end{eqnarray}
We also assume the boundary data and the right-hand side are parameter separable, 
\beq 
g_D(\mu) = \sum_{q=1}^{Q_{D}}\theta^q_{D}(\mu)g_{D}^q, \quad 
g_N(\mu) = \sum_{q=1}^{Q_{N}}\theta^q_{N}(\mu)g_{N}^q, \quad 
f(\mu) = \sum_{q=1}^{Q_{f}}\theta^q_{f}(\mu)f^{q}, \quad \mu \in \cD,
\eeq
where $\theta^q_{D}$, $\theta^q_{N}$, $\theta^q_{f}$, $g_{D}^q$, $g_{N}^q$, and $f^{q}$  are defined in a similar fashion as the terms for $a$ and $b$.
We assume that $g_{D}^q$ is in the range of $\tr_0$ on $V_h$, $g_{N}^q$ is in the range of $\tr_1$ on $\Sigma_h$, and $f^q$ is $D_h$, i.e., they satisfy Assumption \ref{asss}.
\end{asm}
\begin{rem}
 The assumption of $a$ (the $a$ form is defined in \eqref{aform}) and $b$ (the $b$ form is defined in \eqref{bform}) can also be stated in the parameter separabilities of $\cA(\mu)$ and $\cA^{-1}(\mu)$:
\beq \label{AandinvA}
\cA(\mu) = \sum_{q=1}^{Q_{A}}\theta^q_{A}(\mu)\cA^q, \quad 
\cA^{-1}(\mu) = \sum_{q=1}^{Q_{A^{inv}}}\theta^q_{A^{inv}}(\mu)\cA_{inv}^q, \quad \mu \in \cD,
\eeq
where $\theta^q_{A}(\mu)$ and $\theta^q_{A^{inv}}(\mu)$ are parameter-dependent functions while $\cA^q$ and $\cA_{inv}^q$ are parameter-independent. Then, if we further assume that the integration $(\cA^q v,w)$ for $v$, $w$ in $V_h$ can be exactly integrated, it is $a^q(v,w)$. Similar results are also true for $\cA^{-1}$ and $b$. For a simple case that $\cA(\mu)$ (and thus $\cA^{-1}(\mu)$) is piecewisely defined on the computational domain, \eqref{AandinvA} is clearly satisfied. 
\end{rem}
\begin{rem}
Same as the discussion in Remark \ref{dataH1}, the assumption that the data $g_{D}^q$, $g_{N}^q$, and $f^q$ satisfying the assumption \ref{asss} is only for simplicity and is not essential.
\end{rem}

For a given $\mu\in \cD$, we want to construct a $u_{g}(\mu)\in V_{g}(\mu)$. First, we find a set of parameter-independent  $u^q_D \in V_{h}$, such that 
\beq \label{uqD}
\tr_0(u^q_D) = g_{D}^q,  \quad  q=1, \cdots, Q_D.
\eeq
These $u^q_D$ usually can be explicitly constructed by letting the DOFs inside the domain to be zero while the DOFs on $\G_D$ to be appropriate values to match $g_D^q$, which is a common FE technique. Another common and probably better way to construct a function $u^q_D$ is to solve a fixed parameter version of the corresponding PDE \eqref{p_ebvp} with a zero right-hand side and a zero boundary condition on $\Gamma_N$ which satisfies the nonzero boundary condition \eqref{uqD}. The function $u^q_D$ constructed by the second way is smooth and this ensures that the associated parametric manifold of $V^N_{\rb}$ is smooth and improves the RBM convergence. 

Once this is done, let
\beq
 V_g(\mu) \ni u_g(\mu) = \sum_{q=1}^{Q_{D}}\theta^q_{D}(\mu)u^q_D.
\eeq
With the parameter separability of $f$, $g_N$, $a$, and $u_g$, the linear form $\varphi(v;\mu)$ (see \eqref{def_phi}) is also parameter separable, and the primal RBM has an offline-online decomposition.

For a given $\mu\in \cD$, in order to construct a $\sigma_{fg}(\mu)\in \Sigma_{fg}(\mu)$, a divide and conquer strategy is taken. We plan to find a  $\sigma_{fg}(\mu) \in \Sigma_{fg}(\mu)$ with the decomposition:
$
\sigma_{fg}(\mu) = \sigma_{f0}(\mu) +\sigma_{0g}(\mu),
$
where $\sigma_{f0}(\mu)$ and $\sigma_{0g}(\mu)$ $\in \Sigma_h$, satisfying:
\begin{eqnarray} \label{f0}
\Lambda^t \sigma_{f0}(\mu) = -f(\mu)\mbox{ in }\O, &\quad&  \tr_1(\sigma_{f0}(\mu)) =0  \mbox{ on } \Gamma _N,\\[2mm]
\label{0g}
\and\Lambda^t\sigma_{0g}(\mu) =0\mbox{ in }\O,  && \tr_1(\sigma_{0g}(\mu)) =g_N(\mu)  \mbox{ on } \Gamma _N.
\end{eqnarray}
First, find a set of parameter-independent  $\sigma_{f0}^q \in \Sigma_{h}$, such that 
\beq
\Lambda^t \sigma_{f0}^q = -f^q \mbox{ in } \O \and \tr_1(\sigma_{f0}^q) =0 \mbox{ on }  \Gamma _N,\quad  q=1, \cdots, Q_f.
\eeq
This can be done by finding a FE solution $\sigma_{f0}^q \in \Sigma_{0,h}$, such that
\beq 
c((\sigma_{f0}^q ,w_h), (\tau,v)) = -(f_q,\,v)\quad \forall \,\, (\tau,v)\in \Sigma_{0,h}\times D_h, \quad  q=1, \cdots, Q_f.
\eeq
Then, find a set of parameter-independent  $\sigma_{0g}^q \in \Sigma_{h}$, such that 
\beq
\Lambda^t \sigma_{0g}^q = 0  \in \O \and   \tr_1(\sigma_{0g}^q) =g_N^q  \mbox{ on } \Gamma _N,  \quad q=1, \cdots, Q_N.
\eeq
This can be done by finding a FE solution $\sigma_{0g}^q \in \Sigma_{h}$,  $\tr_1(\sigma_{0g}^q) =g_N^q$  on $\Gamma _N$, such that
\beq 
c((\sigma_{0g}^q ,w_h), (\tau,v)) = 0\quad \forall \,\, (\tau,v)\in \Sigma_{0,h}\times D_h, \quad  q=1, \cdots, Q_N.
\eeq
Let
$
\sigma_{f0}(\mu) = \sum_{q=1}^{Q_{f}}\theta^q_{f}(\mu)\sigma^q_{f0}$
and
$
\sigma_{0g}(\mu) = \sum_{q=1}^{Q_{N}}\theta^q_{N}(\mu)\sigma^q_{0g},
$
then $\sigma_{f0}(\mu)$ satisfies \eqref{f0} and $\sigma_{0g}(\mu)$ satisfies \eqref{0g}.  The sum $\sigma_{fg}(\mu) = \sigma_{f0}(\mu) +\sigma_{0g}(\mu)$ is in $\Sigma_{fg}(\mu)$ and it is parameter separable. Together with the assumption that $g_D$ and $b$ are parameter separable,  the dual RB problem has an offline-online decomposition.

Note that all online computations only depend on the dimensions of RB basis $N$, and various $Q$'s, but not depend on the FEM's number of DOFs. The computations of online operation counts in a similar setting can be found  in \cite{Yano:15,Yano:16}, we skip them here for simplicity.

\subsection{Combined RB error estimate}
If we view the primal-dual problem as one combined problem, for any fixed $\mu\in \cD$, we have the same best approximation result in the combined $\mu$-dependent norm as \eqref{combined_apriori}.

Define the PD-RB a posteriori error estimator: 
$$
\eta_{\rb}(u_{\rb}^N,\sigma_{\rb}^N;\mu) = \sqrt{2(\Jp(u_{\rb}^N(\mu);\mu) - \Jd(\sigma_{\rb}^N(\mu);\mu))} = \|\cA^{1/2}\Lambda u_{\rb}^N(\mu)+\cA^{-1/2}\sigma_{\rb}^N(\mu)\|_0.
$$
By \eqref{PS3}, it is an exact error estimator for the combined reduced basis energy error
since 
\beq \label{exact_rb}
\eta_{\rb}(u_{\rb}^N,\sigma_{\rb}^N;\mu) = \cnorm{(u(\mu)-u_{\rb}^N(\mu) , \bsigma(\mu)-\bsigma_{\rb}^N(\mu))}_{\mu}.
\eeq
With Assumption 5.4 on the parameter separability, the primal and dual functionals are also parameter separable, thus the online evaluation of $\eta_{\rb}(u_{\rb}^N,\sigma_{\rb}^N;\mu)^2$ also only depends on $N$ and various $Q$'s, see also \cite{Yano:15}.

\subsection{Relation between FE and RB errors}

By the best approximation properties \eqref{ap_primalfem2},\eqref{ap_dualfem}, \eqref{ap_primalrb}, and \eqref{ap_dualrb} (note that they are all identities) and the facts that the RB spaces are subspaces of the corresponding FE spaces, $V_{\rb}^N \subset V_{0,h}$ and  $\Sigma_{\rb}^N \subset \Sigma_{00,h}$, we have the following comparison theorem between FE and RB solutions.

\begin{thm}[Comparison of RB and FE errors] \label{comp_fe_rb}
For a given $\mu\in \cD$, with the same data, let $u_{h}(\mu)$ and $u_{\rb}^N(\mu)$ be the primal FE  and RB solutions of \eqref{ep_weak2_fem} and \eqref{ep_weak2_rb},  and $\sigma_{h}(\mu)$ and $\sigma_{\rb}^N(\mu)$ be the dual FE  and RB solutions of \eqref{fem_d} and \eqref{rbm_dual}, respectively, then we have the following bounds:
\begin{eqnarray} \label{com1}
\tri u(\mu) - u_{h}(\mu)\tri_{\mu} &\leq& \tri u(\mu) - u_{\rb}^N(\mu)\tri_{\mu}, \\
\Rnorm{\sigma(\mu) - \sigma_h(\mu)}_{\mu}  &\leq& \Rnorm{\sigma(\mu) - \sigma_{\rb}^N(\mu)}_{\mu}, \\
 \cnorm{(u(\mu)-u_h(\mu) , \bsigma(\mu)-\bsigma_h(\mu))}_{\mu} &\leq& 
  \cnorm{(u(\mu)-u_{\rb}^N(\mu) , \bsigma(\mu)-\bsigma_{\rb}^N(\mu))}_{\mu}.
\end{eqnarray}
For the special case that $\mu\in S_N$, the above inequalities become equalities.
\end{thm}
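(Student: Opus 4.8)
The plan is to reduce the entire statement to the best-approximation identities already established, combined with the fact that each reduced space sits inside the corresponding finite element space. First I would record the two structural ingredients that drive everything: by construction the reduced bases are spanned by homogeneous FE snapshots, so $V_{\rb}^N \subset V_{0,h}$ and $\Sigma_{\rb}^N \subset \Sigma_{00,h}$, and the \emph{same} affine lifts $u_g(\mu)$ and $\sigma_{fg}(\mu)$ are used in both the FE and the RB formulations. This last point is what makes the comparison clean: in both the FE identity \eqref{ap_primalfem2} and the RB identity \eqref{ap_primalrb} the element being approximated is exactly $u(\mu)-u_g(\mu)$, and likewise $\sigma(\mu)-\sigma_{fg}(\mu)$ in the dual identities \eqref{ap_dualfem} and \eqref{ap_dualrb}.

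Then the primal bound \eqref{com1} is immediate: the infimum of a fixed quantity over a smaller set cannot be smaller than the infimum over a larger set, so the inclusion $V_{\rb}^N \subset V_{0,h}$ gives
\[
\inf_{v_{0,h}\in V_{0,h}} \tri u(\mu)-u_g(\mu)-v_{0,h}\tri_\mu
\;\leq\;
\inf_{v_{\rb}\in V_{\rb}^N} \tri u(\mu)-u_g(\mu)-v_{\rb}\tri_\mu ,
\]
and substituting the two best-approximation identities for the left and right sides yields $\tri u(\mu)-u_h(\mu)\tri_\mu \leq \tri u(\mu)-u_{\rb}^N(\mu)\tri_\mu$. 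The dual bound follows verbatim from $\Sigma_{\rb}^N \subset \Sigma_{00,h}$ together with \eqref{ap_dualfem} and \eqref{ap_dualrb}. For the combined bound I would either invoke the combined best-approximation identity \eqref{combined_apriori} over the nested product space $V_{\rb}^N\times\Sigma_{\rb}^N \subset V_{0,h}\times\Sigma_{00,h}$, or simply observe that the two scalar inequalities feed into the monotonicity of $\sqrt{a^2+b^2}$ in its nonnegative arguments through the definition of $\cnorm{\cdot}$.

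For the equality case $\mu\in S_N$, I would argue that the relevant FE snapshot lies in the reduced space and therefore already solves the reduced problem. Writing $\mu=\mu_i$, the snapshot $u_{0,h}(\mu_i)$ is one of the generators of $V_{\rb}^N$ and satisfies the FE Galerkin equation against all of $V_{0,h}$, in particular against the subspace $V_{\rb}^N$; since $u_{0,h}(\mu_i)\in V_{\rb}^N$, it is by uniqueness \emph{the} solution of the reduced problem \eqref{ep_weak2_rb}, so $u_{\rb}^N(\mu_i)=u_h(\mu_i)$ and the primal error coincides. The identical reasoning applied to $\sigma_{00,h}(\mu_i)\in\Sigma_{\rb}^N$ forces equality in the dual, hence in all three estimates.

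The argument is essentially a one-line consequence of ``infimum over a subspace is monotone,'' so there is no genuine analytic obstacle; the only points warranting care are bookkeeping. I must verify that the same lifts $u_g(\mu)$ and $\sigma_{fg}(\mu)$ appear on both sides, so that the two infima are taken over nested sets approximating the \emph{same} target element, and that the strong-form constraint reduction \eqref{in00} guarantees the dual snapshots actually lie in $\Sigma_{00,h}$ (rather than merely $\Sigma_{0,h}$), which is precisely what legitimizes the inclusion $\Sigma_{\rb}^N \subset \Sigma_{00,h}$ used throughout.
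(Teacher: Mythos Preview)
Your proposal is correct and follows essentially the same approach as the paper: the paper likewise derives the comparison directly from the best-approximation identities \eqref{ap_primalfem2}, \eqref{ap_dualfem}, \eqref{ap_primalrb}, \eqref{ap_dualrb} together with the inclusions $V_{\rb}^N\subset V_{0,h}$ and $\Sigma_{\rb}^N\subset\Sigma_{00,h}$, and handles the $\mu\in S_N$ case by the reproduction-of-solutions argument. Your additional care about the shared lifts and the role of \eqref{in00} is accurate bookkeeping that the paper leaves implicit.
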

\begin{rem}
For the case that $\mu\in S_N$, we have the reproduction of solutions (Prop. 2.21 of \cite{Ha:17}), the RB solutions are the FE solutions.

The above results show that the true RB errors measured in energy norms are always worse than (or equal to) their corresponding FE errors. Note that, since the primal RBM is essentially the standard RBM, so \eqref{com1} is also true for the standard primal RBM. The difference is that in the standard RBM, the error estimator is not designed to measure this specific error. On the other hand, in the PD-RBM, the combined RB energy error can be explicitly and exactly computed by the primal-gap gap a posteriori error estimator \eqref{exact_rb}. This sets up the foundation of our balanced greedy algorithms to determine the RB stopping criteria.

In principle, for the true RB and FE errors in the non primal-dual setting, we should expect the similar results, that the RB errors are always worse than (or equal to) their corresponding FE errors for the same $\mu$. But the constant may not being one, and it may depend on parameters and be non-robust with respect to parameters.
\end{rem}

\section{Several Examples of Primal-Dual Variational Problems}
\setcounter{equation}{0}
In this section, we first present several examples of the symmetric coercive linear elliptic mixed boundary value problem, their corresponding primal and dual problems, Lagrangians, and error relations. Then we discuss some examples beyond the linear elliptic boundary value problems.

First, we define some function spaces and sets. Let
$
	H^1_{D,g}(\O)= \{v\in H^1(\O):\, v= g_{D} \; \text{ on }\, \G_D \}$ 
	and
	$H^1_{D}(\O) = \{v\in H^1(\O):\, v= 0 \; \text{ on }\, \G_D \}$.
Define $H(\divvr;\Omega)= \{ \btau\in L^2(\O)^d : \gradt\btau \in L^2(\O)\}$, then let 
$
H_{N,g}(\divvr;\Omega)= \{ \btau \in H(\divvr;\O): \btau \cdot \bn = g_N  \mbox{ on }\G_N\}
$ and $H_{N}(\divvr;\Omega)= \{ \btau \in H(\divvr;\O): \btau \cdot \bn = 0  \mbox{ on }\G_N\}$,
where $\bn$ is the unit outer normal on $\p\O$.
We assume that $A(x)$ is a symmetric positive definite matrix and $0<\kappa(x)\in L^{\infty}(\O)$. 

In the following subsections, we use the subscripts $d$, $rd$, and $le$ to denote the functionals and norms are for the diffusion problem, the reaction-diffusion problem, and the linear elasticity problem, respectively.

\subsection{Diffusion Problem}

Consider the diffusion problem:
\begin{equation}
	-\gradt(A \nabla u) = f  \text{ in }\, \O,	\quad
			u = g_D  \text{ on }\, \G_D,		\quad
	-A \nabla u \cdot \bn  = g_{N} 	\text{ on }\, \G_N.
\end{equation}
For this problem, let $V= H^1(\O)$,
$V_0 = H^1_D(\O)$, $\Lambda  = \nabla$,  $\Lambda^t  = -\gradt $, and $\cA =A$, thus $\cA\Lambda v =  A\nabla v$, $\Sigma = H(\divvr;\O)$. The trace $\tr_1(\btau)$ for $\btau\in H(\divvr;\O)$ is $\btau\cdot\bn$ on $\p\O$, and the trace $\tr_0$ for $v\in H^1(\O)$ is the standard trace. Then we have the following integration by parts formula (corresponding to \eqref{ibp}):
\beq \label{ibp2}
(\btau, \nabla v)_\O + (\gradt \btau, v)_\O = (\btau\cdot\bn, v)_{\p\O}, \quad \forall (v, \tau)\in H^1(\O)\times H(\divvr;\O).
\eeq
The weak problem is:
\beq
\mbox{Find } u\in H_{D,g}^1(\O): \quad (A\nabla u,\nabla v) = (f,v)- (g_{N}, v)_{\G_N}, \quad \forall v\in H_D^1(\O).
\eeq
The primal variational problem is:
\begin{eqnarray} \label{pphi_d}
\mbox{Find } u\in H^1_{D,g}(\O): \;
\Jp_{d}(u) = \inf_{v\in H^1_{D,g}(\O)} \Jp_{d}(v), \\ \nonumber
\Jp_{d}(v)= \frac{1}{2}\|A^{1/2}\nabla v\|_0^2 - (f,v) + (g_{N}, v)_{\G_N}.
\end{eqnarray} 
To derive the dual problem (D), we set the constraint set $\Sigma_{fg}$ as
\beq
 H_{N,g}(\divvr;\O;f) = \{\btau \in H(\divvr;\O): \gradt \btau = f\and \btau\cdot \bn =g_N \mbox{ on } \Gamma_N\}.
\eeq
The space $\Sigma_{00}$, which is $H_{N}(\divvr;\O;0)$ in the diffusion equation case, can de defined accordingly.
The dual problem is:
\begin{eqnarray} \label{dphi_d}
\mbox{Find } \bsigma\in H_{N,g}(\divvr;\O;f):  \quad
\Jd_d(\bsigma) = \inf_{\btau\in H_{N,g}(\divvr;\O;f)} \Jd_d(\btau),\\ \nonumber
	\Jd_d(\btau) = - \frac{1}{2} (A^{-1}\btau,\btau) -(\btau\cdot\bn, g_D)_{\Gamma_D}.
\end{eqnarray}
The energy norms are
\beq
\tri v \tri_d = \|A^{1/2}\nabla v\|_0 \mbox{ for } v\in H^1_D(\O) 
\quad\mbox{and}\quad
\Rnorm{\btau}_d = \|A^{-1/2}\btau\|_0  \mbox{ for }\btau \in H_{N}(\divvr;\O;0). 
\eeq
The relation \eqref{relation} for the diffusion equation is:
\beq
\mbox{(extremal relation)}\quad 
\bsigma = -A\nabla u.
\eeq
The combined error relation (the Prager-Synge identity) \eqref{PS3} is: For all $v\in H_{D,g}(\O)$ and $\btau\in H_{N,g}(\divvr;\O;f)$,
\beq \label{PS}
\|A^{1/2}\nabla (u-v)\|_0^2 + \|A^{-1/2}(\bsigma-\btau)\|_0^2 
= \|A^{1/2}\nabla v+ A^{-1/2}\btau\|_0^2.
\eeq
We can also derive the prime-dual problems by setting the Lagrangian as in \eqref{lag}:
\beq
\cL_d(v,\tau) = \Jp_d(v) - \frac{1}{2} \|A^{1/2} \nabla v+ A^{-1/2} \btau\|_0^2,
\quad (v,\btau)\in H_{D,g}^1(\O) \times H(\divvr;\O).
\eeq
The saddle-point relation \eqref{saddlepoint} (also the strong duality) is:
\beq
 \inf_{v\in H^1_{D,g}(\O)} \Jp_{d}(v) =
\Jp_d(u) = \cL_{d}(u,\bsigma) = \Jd_d(\bsigma) = \inf_{\btau\in H_{N,g}(\divvr;\O;f)} \Jd_d(\btau).
\eeq
Let $k\geq 1$ be an integer, 
$S_k\subset H^1(\O)$, the global continuous degree-$k$ piecewise polynomial space, and $RT_{k-1}\subset H(\divvr;\O)$, the Raviart-Thomas space with index $k-1$, constitute a good pair of FE approximation spaces for $H^1(\O)$ and $H(\divvr;\O)$. Standard FE convergence results can be found in \cite{Ciarlet:78,BBF:13}. Robust a priori FE approximation results for the primal and dual FE problems can be found in \cite{CCZ:20,zhang:20mixed}. Robust a posteriori FE error estimators based on the Prager-Synge identity (the so-called equilibrated error estimator, which is a primal-dual gap error estimator) can be found in \cite{BS:08,CZ:12,CCZ:20} for the primal problem and in \cite{CCZ:20mixed} for the dual mixed problem.

%

\subsection{Reaction-Diffusion Problem}
Consider the reaction-diffusion equation:
\begin{equation} \label{rd_equation}
	-\gradt(A \nabla u) + \kappa^2 u = f  \text{ in }\, \O,	\quad
			u = g_D  \text{ on }\, \G_D,		\quad
	-A \nabla u \cdot \bn  = g_{N} 	\text{ on }\, \G_N.
\end{equation}
For this problem, let $V= H^1(\O)$, $V_0 = H^1_D(\O)$, $\Lambda v = [\nabla v, v]$ and $\cA =[A,\kappa^2]$, thus $\cA\Lambda v = [A\nabla v, \kappa^2 v]$. Let $\tau = [\btau, w]$, then $\Lambda^t [\btau, w] = -\gradt\btau+w$, and $\Sigma= H(\divvr;\O)\times L^2(\O)$.
The trace $\tr_1([\btau, w]) = \btau\cdot\bn$ on $\p\O$, and the trace $\tr_0$ for $v\in H^1(\O)$ is the standard.
Let $\tau = [\btau, w] \in \Sigma =H(\divvr;\O)\times L^2(\O) $ in the integration by parts formula \eqref{ibp}, we have
$$ 
(\btau, \nabla v)+(w,v) +(\gradt \btau, v) - (w,v) = (\btau\cdot\bn, v)_{\p\O}, \quad \forall (v, \btau)\in H^1(\O)\times H(\divvr;\O), w\in L^2(\O),
$$
which is exactly \eqref{ibp2}.
The weak primal problem is:
\beq \label{pphi_rd_weak}
\mbox{Find } u\in H_{D,g}^1(\O): \; (A\nabla u,\nabla v)+(\kappa^2 u, v) = (f,v) - (g_{N}, v)_{\G_N},  \quad \forall v\in H_D^1(\O).
\eeq
The primal problem is:
\begin{eqnarray} \label{pphi_rd}
&&\mbox{Find } u\in H_{D,g}^1(\O):  \quad
\Jp_{rd}(u) = \inf_{v\in H_{D,g}^1(\O)} \Jp_{rd}(v), \\ \nonumber
 && \Jp_{rd}(v)= ((A\nabla v,\nabla v)+(\kappa^2v,v))/2 - (f,v)+(g_{N}, v)_{\G_N}.
\end{eqnarray}
To derive the dual problem, we first check the conditions in $\Sigma_{fg}$. 
The condition $\Lambda^t [\btau, w] = -\gradt\btau+w = -f$, we get the constraint set $\Sigma_{fg}$ is 
$$
\{\btau \in H(\divvr;\O), w\in L^2(\O): w -\gradt\btau = -f, \btau\cdot\bn =g_N \mbox{ on }\G_N\}.
$$
The dual functional is 
$
	- \frac{1}{2} (A^{-1}\btau,\btau) 
	-\frac{1}{2}(\kappa^{-2} w,w)
	-(\btau\cdot\bn, g_D)_{\Gamma_D}.
$ 
We can eliminate $w$ by enforcing the condition $w = \gradt\btau  -f$. Then the $\btau$-only dual functional is 
\beq \label{rd_d}
\Jd_{rd}(\btau)=  
- \frac{1}{2} (A^{-1}\btau,\btau) 
	-\frac{1}{2}(\kappa^{-2} (\gradt\btau  -f),\gradt\btau  -f)
	-(\btau\cdot\bn, g_D)_{\Gamma_D}.
\eeq
So the dual problem is:
\beq \label{dphi_rd}
\mbox{Find } \bsigma\in H_{N,g}(\divvr;\O):\quad
\Jd_d(\bsigma) = \inf_{\btau\in H_{N,g}(\divvr;\O)} \Jd_{rd}(\btau).
\eeq
Note that in this formulation, the dual-feasibility condition is removed, and there is no need to use the saddle-point formulation (SDD) or (ESDD). 
The weak problem of \eqref{rd_d} is: Find $\bsigma\in H_{N,g}(\divvr;\O)$ s.t.
\beq  \label{dphi_rd_weak}
(A^{-1}\bsigma,\btau)+(\kappa^{-2} \gradt \bsigma, \gradt \btau) = (\kappa^{-2} f, \gradt \btau)-(\btau\cdot\bn, g_D)_{\Gamma_D}, \quad \forall \btau\in H_{N}(\divvr;\O).
\eeq
The energy norms are
\begin{eqnarray*}
\tri v \tri_{rd} &=& (\|A^{1/2}\nabla v\|_0^2+\|\kappa v\|_0^2)^{1/2} \quad v\in H^1_D(\O), \\ [2mm]
\and
\Rnorm{\btau}_{rd} &=& (\|A^{-1/2}\btau\|_0+\|\kappa^{-1} \gradt\btau\|_0^2 )^{1/2}\quad \btau \in H_{N}(\divvr;\O). 
\end{eqnarray*}
The relation \eqref{relation} for the reaction-diffusion equation is:
$
[\bsigma, \gradt \bsigma-f] = -\cA\Lambda u = [-A \nabla u, -\kappa^2 u].
$
That is,
\beq \label{relation_rd}
\mbox{(extremal relation)}\quad 
\bsigma = -A\nabla u
\and
\gradt \bsigma + \kappa^2 u= f.
\eeq
Note that \eqref{relation_rd} is just the first-order system reformulation of \eqref{rd_equation}.

Substitute the definitions of $\tau$, $\Lambda$, and $\cA$ into \eqref{PS3},  the combined error relation is: For all $v\in H_{D,g}(\O)$ and $\btau\in H_{N,g}(\divvr;\O)$,
\begin{eqnarray*}
\tri u-v\tri_{rd}^2 + \Rnorm{\bsigma-\btau}_{rd}^2
 = \|A^{1/2}\nabla v+ A^{-1/2}\btau\|_0^2 +
 \|\kappa^{-1}\gradt(\bsigma-\btau)+\kappa (u-v)\|_0^2.
\end{eqnarray*}
Since the dual functional is obtained by enforcing the condition $w =\gradt\btau -f$ directly, the Lagrangian can be defined by adding only one term from \eqref{relation_rd}:
\beq
\cL_{rd}(v,\tau) = \Jp_{rd}(v) - \frac{1}{2} \|A^{1/2} \nabla v+ A^{-1/2} \btau\|_0^2,  \quad (v,\btau)\in H_{D,g}^1(\O) \times H(\divvr;\O).
\eeq
We can check that the primal and dual functionals obtained by $\cL_{rd}$ are the same as the ones in \eqref{pphi_rd} and \eqref{rd_d}. It is also easy to check that the saddle-point relation (also the strong duality) holds:
\beq
 \inf_{v\in H^1_{D,g}(\O)} \Jp_{rd}(v) =
\Jp_{rd}(u) = \cL_{rd}(u,\bsigma) = \Jd_{rd}(\bsigma) = \inf_{\btau\in H_{N,g}(\divvr;\O)} \Jd_{rd}(\btau).
\eeq
\begin{rem}
In the literature, for example, \cite{Yano:15} and  p.185 of \cite{Braess:07}, when constructing the dual problem, a condition $\gradt \bsigma + \kappa^2 u_h =f$ is enforced, where $u_h$ is the approximation to $u$. This adds extra complexity to the dual problem. In \cite{AV:19}, the right and simpler dual problem \eqref{dphi_rd_weak} is chosen.
\end{rem}
For the primal and dual problems \eqref{pphi_rd_weak} and \eqref{dphi_rd_weak}, the same spaces $S_k\subset H^1(\O)$ and  $RT_{k-1}\subset H(\divvr;\O)$, $k\geq 1$,  as the discrete spaces for the diffusion problem can be used to approximate $u$ and $\bsigma$, respectively. The robust a posteriori FE error estimator based on the primal-dual gap can be found in \cite{AV:19}.

\subsection{Linear Elasticity Problem}
Let $\beps(\bu) = (\nabla \bu + (\nabla \bu)^T)/2$ be the strain tensor. For an isotropic material, we have the following constitutive relation between the strain tensor $\beps(\bu)$ and the Cauchy stress tensor $\bsigma$, 
$\bsigma = \bbC\beps(\bu)$ or $\beps(\bu) = \bbA \bsigma,
$
where the elasticity tensor $\bbC$ and the compliance tensor $\bbA=\bbC^{-1}$ are ($\lambda_1>0$ and $\lambda_2>0$):
\beq
\bbC\beps(\bu)=\lambda_1 \tr (\beps(\bu))I + 2\lambda_2 \beps(\bu)
\and
\bbA \bsigma=\dfrac{1}{2\lambda_2} (\bsigma- \dfrac{\lambda_1}{d\lambda_1+2\lambda_2} (\tr \bsigma)I).
\eeq
Consider the linear elasticity problem:
\beq
-\gradt (\bbC \beps(\bu))= \bff \mbox{ in } \O,
\eeq
with boundary conditions (we use homogeneous boundary conditions for simplicity, non-homogenous conditions can be easily added):
$$
\bu =0 \quad \mbox{on }\Gamma_D \and \bbC\beps(\bu)\cdot\bn =0 \quad \mbox{on }\Gamma_N. 
$$
For this problem, let $V= H^1(\O)^d$,
$V_0 = H^1_D(\O)^d$, $\Lambda \bv  = -\beps(\bv)$,  $\Lambda^t  = \gradt $, $\cA =\bbC$, and $\cA^{-1} =\bbA$ thus $\cA\Lambda v =  -\bbC \beps(\bv)$.
Let $H(\divvr;\O;\mathbb{S})$  be the space of square-integrable symmetric tensors with square-integrable divergence (the divergence is taken row-wise):
$$
H(\divvr;\O;\mathbb{S}) := \{\btau \in L^2(\O)^{d\times d}:  \gradt\btau \in L^2(\O)^d, \btau = \btau^T\}.
$$
Let $\Sigma = H(\divvr;\O;\mathbb{S})$. 
We have the following integration by parts formula (see for example, page xxix and page 288 of \cite{Ciarlet:88} and Theorem 3.1 of \cite{CGK:12}):
\beq 
(\btau, \beps(\bv))_\O + (\gradt \btau, \bv)_\O = (\btau\bn, \bv)_{\p\O}, \quad \forall (\bv, \btau)\in H^1(\O)^d\times H(\divvr;\O;\mathbb{S}).
\eeq
The primal problem is:
\beq \label{pphi_le}
\mbox{Find } \bu\in H^1_D(\O):  \;
\Jp_{le}(\bu) = \inf_{\bv\in H^1_D(\O)^d} \Jp_{le}(\bv), \;
\Jp_{le}(\bv)= \frac{1}{2}(\bbC\beps(\bv), \beps(\bv)) - (\bff,\bv).
\eeq
By the derivation of the dual problem (D), we set the constraint set as
\beq
 H_{N}(\divvr;\O;S;f) = \{\btau \in H(\divvr;\O;\mathbb{S}): \gradt \btau = -f\and \btau \bn = 0 \mbox{ on } \Gamma_N\}.
\eeq
The dual problem is:
\beq \label{dphi_le}
\mbox{Find } \btau\in  H_{N}(\divvr;\O;\mathbb{S};f):  \;
\Jd_{s}(\bsigma) = \inf_{\btau\in  H_{N}(\divvr;\O;\mathbb{S};f)} \Jd_{le}(\bv), \; \Jd_{le}(\btau)= -\frac{1}{2}(\bbA \btau, \btau).
\eeq

\begin{rem}
As it is well-known in the mechanics \cite{Ciarlet:88,OR:83,Reddy:17}, the primal problem represents the principle of virtual work. The functional $\Jp_{le}$ is the total potential energy, which is the sum of the strain energy $ \frac{1}{2}(\bbC\beps(\bv), \beps(\bv))$ and of the potential energy of the exterior forces $- (\bff,\bv)$, see p.27 of \cite{Ciarlet:78}. The functional $\Jd_{le}$ is called the complementary energy functional. Similar explanations also hold for the diffusion and reaction diffusion problems. This is also the reason that the norms $\tri\cdot \tri $ and $\Rnorm{\cdot}$ are called energy norms.
\end{rem}
The relation \eqref{relation} for the linear elasticity equation is:
\beq \mbox{(extremal relation)}\quad 
\bsigma = \bbC \beps (u).
\eeq
The energy norms are
$$
\tri \bv \tri_{le} = \|\bbC^{1/2}\beps (\bv)\|_0 \mbox{ for } \bv \in H_D^1(\O)^d
\and
\Rnorm{\btau}_{le} =\|\bbA^{1/2}\btau\|_0 \mbox{ for }  \btau \in H_N(\divvr;\O;\mathbb{S};0).
$$
The combined error relation \eqref{PS3} is 
\beq 
\tri \bu-\bv \tri_{le}^2 + \Rnorm{\bsigma-\btau}_{le}^2 
= \|\bbC^{1/2}\beps( \bv) - \bbA^{1/2}\btau\|_0^2,
\; v\in H^1_{D}(\O)^d, \btau\in H_{N}(\divvr;\O;\mathbb{S};f).
\eeq
The Lagrangian for the linear elasticity problem is 
\beq 
\cL_{le}(\bv,\btau) = \Jp_{le}(\bv) - \frac{1}{2} \|\bbC^{1/2} \eps (\bv)- \bbA^{1/2} \btau\|_0^2
 = -\frac{1}{2}(\bbA\btau,\btau) + (\gradt\btau-f, v),
\eeq
for $(\bv, \btau) \in H_{D}^1(\O)^d\times  H_{N}(\divvr;\O;\mathbb{S})$.
This is the renown Hellinger-Reissner principle.

We can check that the primal and dual functionals obtained by $\cL_{le}$ are the same as the ones in \eqref{pphi_le} and \eqref{dphi_le}. It is also easy to check that the saddle-point relation (also the strong duality) holds:
\beq
 \inf_{\bv\in H^1_{D}(\O)^d} \Jp_{le}(v) =
\Jp_{le}(u) = \cL_{le}(\bu,\bsigma) = \Jd_{le}(\bsigma) = \inf_{\btau\in H_{N}(\divvr;\O;\mathbb{S};f)} \Jd_{le}(\btau).
\eeq

\begin{rem}
They are several other ways to develop the dual problems for the linear elasticity problems, see for example \cite{Ciarlet:88,OR:83,CGK:11,CGK:12}.  
\end{rem}
The FE approximations for the primal problem is standard, see \cite{Ciarlet:78,Braess:07}. For the dual problem with symmetric tensors, the mixed FEM is developed in \cite{AW:02}.

\subsection{Other linear and nonlinear examples}
The 4th-order Kirchhoff-Love plate problem, is also in the framework of the linear elliptic boundary value problem, see \cite{Arnold:90}. Its FE a posteriori error estimator based on the primal-dual variational principle can be found in \cite{BPS:20}.

Several variational inequality problems including the obstacle problem (Example 1.26 of \cite{Han:05}), contact problems in elasticity (Example 1.27 of \cite{Han:05}), are in the framework of primal-dual variational principles discussed in Section 2, see \cite{ET:76,HHNL:88,Glow:84,Bar:15}. FE a posteriori error estimator based on the primal-dual gap for the obstacle problem can be found in \cite{BHS:08}.

Nonlinear equations, such as the nonlinear Dirichlet problem (p-Laplacian problem) (see p.81 of \cite{ET:76}), the  Rudin-Osher-Fatemi model of total variation denoising \cite{Bar:15_mathcomp,BM:19}, and the Monge–Kantorovich problem \cite{BS:17} are also examples of primal-dual variational problems in the framework of Section 2. In \cite{ET:76,Zeidler:85,NR:04,Han:05}, there are many other physically and economically interesting problems in the framework.

In principle, the idea of using primal-dual gap error estimator as both the FE and RB a posteriori error estimators can be applied to these more challenging problems too. For RBM, advanced tools like the EIM (Empirical Interpolation Method) are needed. The rigorous upper bound is still true as long as the primal and dual approximations are in the right approximation spaces and the nonlinear functionals are exactly evaluated. If EIM is used to approximate nonlinear primal and dual functionals, we should also design algorithms carefully to balance the approximation error of EIM and other error contributions of RB problems, as we will do in Section 7 for FE and RB errors.

\subsection{Least-squares variational principle as a special case}
For many linear or nonlinear problems, for examples, non-symmetric problems, there is no natural minimization principle associated, then the primal-dual framework does not apply. In some cases, the dual problem can be very complicated, for example, the Stokes equation, see \cite{ET:76}. One remedy is constructing an artificial minimization principle: the least-squares principle \cite{BG:09}. For example, for the equation $-\gradt(A\nabla u) + Xu =f$, with $Xu = \bb\cdot\nabla u + cu$, $\bb(x)\in C^1(\O)^d$, $c(x)\in L^{\infty}(\O)$, with $u=0$ on $\p\O$, the least-squares functional is 
\beq
J^{ls}(\btau,v) = \|A^{-1/2}\btau+ A^{-1/2}\nabla v\|_0^2 + \|\gradt \btau +Xv -f\|_{0}^2, 
\; (\btau,v) \in H(\divvr;\O)\times H^1_0(\O).
\eeq
If we view $J^{ls}(\btau,v) =\Jp(\btau,v)$ as the primal functional, then its dual functional is simply $\Jd(\btau,v) = -J^{ls}(\btau,v)$. Both the primal and dual problems in the sense of \eqref{pp} and \eqref{pd} are the same least-squares optimization problem. The optimizer is the solution $(\bsigma, u)$ with $\bsigma = -A \nabla u$, and $J^{ls}(\bsigma,u)=0$. Thus, the strong duality \eqref{pequal} is simply
$$
\Jp(\bsigma,u) = J^{ls}(\bsigma, u) = 0 =-J^{ls}(\bsigma, u)=\Jd(\bsigma,u).
$$
By choosing the dual approximation the same as the primal approximation, the gap of the primal and dual approximations \eqref{gap}, is simply 
$2J^{ls}(\bsigma_h, u_h)$, while it is well known that
\beq
J^{ls}(\bsigma_h, u_h) = \tri (\bsigma-\bsigma_h, u-u_h)\tri_{ls}^2
\eeq
with $\tri (\btau, v) \tri_{ls}^2=  \|A^{-1/2}\btau+ A^{-1/2}\nabla v\|_0^2 + \|\gradt \btau +Xv\|_{0}^2$, the square of norm induced by the least-squares functional. The global error estimator and local error indicator can be naturally defined.

In this understanding, the ideas, methods, and algorithms developed in the paper for the primal-dual variational principle can be applied to least-squares variational principle based-RBM too. The small and important difference is that the natural primal-dual variational principle is based on energy minimization principles of the real world, while the least-squares energy functional is an artificial energy functional (thus the related least-squares energy norm is an artificial energy norm) and needs a careful construction (for example, choosing weights of different terms) and analysis. Results of RBM based on least-squares variational principles can be found in \cite{Yano:16,Olson:20}.


\section{RB Greedy Algorithms and Numerical Results}
In this section, we discuss three greedy algorithms for our PD-RBM.
We use the following parametric diffusion problem on an L-shape domain as our test problem.

\noindent ({\bf Test Parametric Diffusion Problem})
{\em
Consider a two-parameter diffusion equation on an L-shape domain $\O = (-1,1)^2\backslash (-1,0]^2$ with a two dimensional parameter} $\mu = (\mu_{[1]},\mu_{[2]})$:
$$
-\gradt (A(\mu) \nabla u)  =1 \quad \mbox{in }\O, \quad u=0\mbox{ on }\p\O,
$$
$$
\mbox{where }\quad A =\a I, \quad 
\a(\mu) = \left\{ \begin{array}{lll}
10^{\mu_{[1]}} & \mbox{if} & xy>0,\\[1mm]
10^{\mu_{[2]}}&\mbox{if} & xy\leq 0,
\end{array}
\right.
\quad \mbox{for}\quad
\mu\in \cD = [-2,2]^2.
$$
We use standard linear continuous FE space to approximate the primal FE problem and $RT_0$ (Raviart-Thomas element of index $0$) and $P_0$ (piecewise constant) pair of mixed FE spaces to approximate the dual problem.

For the combined primal-dual problem, we use $\eta_{h}(u_h,\sigma_h;\mu)$ as the notation for the global PD-FE a posteriori estimator and $\eta_{h,T}(u_h,\sigma_h;\mu)$ as the notation for the local PD-FE a posteriori error indicator on an element $T\in\cT$. The notation $\eta_{\rb}(u_{\rb}^N,\sigma_{\rb}^N;\mu)$ is used as PD-RB a posteriori error estimator. They are all from the same primal-dual gap error estimator.


\subsection{Comparison with the method in \cite{Yano:18}}
In \cite{Yano:16,Yano:18}, minimum-residual mixed (or least-squares) RBMs are introduced. These formulations are introduced to avoid the dual-feasibility condition $\Lambda^t \sigma = -f$. In \cite{Yano:18}, for the diffusion equation, a term 
$\delta^{-1}\|\gradt \btau+f\|_0^2$ is appeared in the least-squares formulation (the boundary condition is also penalized in \cite{Yano:18}). The coefficient $\delta$ is chosen to be smaller than $\inf_{v\in H_0^1}\dfrac{\|\a^{1/2}\nabla v\|_0^2}{\|v\|_0^2}$ to ensure the coercivity constant of the least-squares problem is close to one with respect to the energy norm. The $\delta$ is not $0$ to ensure that the dual-feasibility condition is not exactly enforced in \cite{Yano:18}, although, in theory $\delta=0$ is the perfect constant since the exact solution satisfies the dual-feasibility condition. In this understanding, the PD-RBM in this paper can be viewed as the theoretically perfect case of $\delta =0$ in \cite{Yano:18}. Thus, many numerical observations in \cite{Yano:18} are also true for our method.

\subsection{Greedy algorithm on a fixed mesh with an adaptive RB tolerance (Algorithm 1)}

In practice, many model reduction problems are computed on a fixed mesh. It is important to re-examine the greedy algorithm to see with an exact primal-dual energy error estimator, what are the best stopping criteria and greedy strategy. 

First, we propose a greedy algorithm on a fixed mesh with an adaptive RB tolerance. 
In the classic RB greedy algorithm, we choose a fixed RB error tolerance $\eps_{\rb}$, and stop the greedy algorithm when all error estimators for $\mu$ in the training set $\Xi_{\mathtt{train}}$ are smaller than $\eps_{\rb}$. In the classic algorithm, the estimator estimates the difference between the RB solution and the "truth" FE solution. In the extreme case, when all points in $\Xi_{\mathtt{train}}$ are used to compute RB snapshots, the estimator is zero. So the greedy algorithm will stop no matter how small the RB tolerance is. On the other hand, if the PD-RB error estimator is used, it is a measure of the true energy error, which will never be zero or converge to zero on a fixed mesh (except for some trivial cases). So we need to adaptively choose the RB error tolerance $\eps_{\rb}$ to make the algorithm stoppable. 

The notation $\eps_h$ is used to represent the FE error tolerance. Suppose the RB parameter set $S_N =\{\mu_i\}_{i=1}^N$ is chosen, define:
$$
\eps_h^0 = 0
\quad\mbox{and}\quad
\eps_h^n = \max\{\eta_{h}(u_h,\sigma_h;\mu_1),\cdots, \eta_{h}(u_h,\sigma_h;\mu_n)\}, \quad 1\leq n \leq N.
$$
Here, we emphasize that $\eta_{h}(u_h,\sigma_h;\mu_i)$ is understood as $\eta_h(u_h(\mu_i),\sigma_h(\mu_i))$.
It is easy to see that 
$$
\eps_h^n = \max\{\eps_h^{n-1}, \eta_{h}(u_h,\sigma_h;\mu_n) \}.
$$ 
So $\eps_h^n \geq \eps_h^{n-1}$, $\eps_h^n$ is monotonically increasing with respect to the RB dimension  $n$.

We use the notation $\eps_\rb$ for the RB error tolerance and want to determine a reasonable $\eps_\rb^N$, such that when 
$
\max_{\mu \in \Xi_{\mathtt{train}}} \eta_{\rb}(u_{\rb}^N,\sigma_{\rb}^N;\mu) \leq \eps_\rb^N 
$,
the greedy algorithm can be terminated.
By Theorem \ref{comp_fe_rb},  for those $\mu_i \in S_N$, their FE combined energy error and RB combined energy error are the same; and for all $\mu\in\cD$, the FE combined energy error is at least as good as the RB combined energy error, thus we define $\eps_\rb^n$ such that
$$
\eps_\rb^n \geq \eps_h^n.
$$
On the other hand, choosing $\eps_\rb^n$ too close to $\eps_h^n$ will result more RB basis functions needed to be computed. From the triangle inequality, for a fixed $\mu$, and a combined norm $\|(\cdot,\cdot)\|$ for both the primal and dual variables, we have
\beq \label{normtri}
\|(u,\sigma) - (u_{\rb}^N,\sigma_{\rb}^N)\| \leq 
\|(u,\sigma) - (u_{h},\sigma_{h})\| 
+ \|(u_h,\sigma_h) - (u_{\rb}^N,\sigma_{\rb}^N)\|.
\eeq
Thus, to balance of the FE tolerance $\eps_h$ and RB tolerance $\eps_{\rb}$, one simple and reasonable choice is 
\beq \label{epsrb}
\eps_{\rb}^n: = \max\{\rferb \eps_h^n, \eps_{\rb}^{n-1}\}, \quad 1\leq n\leq N,
\eeq
where $\rferb >1$ is a fixed number, which is the ratio between the $\eps_\rb^n$ and $\eps_h^n$. 
In this paper, we test two values of $\rferb$, $2$ and $1.1$. 
From \eqref{normtri}, 
the definition \eqref{epsrb} means that for a parameter $\mu\in \Xi_{\mathtt{train}}\backslash S_N$, we keep the norm difference between the RB solution and the FE solution roughly the same ($\rferb=2$) or a fraction ($\rferb=1.1$) of the FE error.
If $\eps_{\rb}^0$ is reasonably small, in the end, $\eps_{\rb}^N$ is $\rferb\eps_h^N$, and they are of the same magnitude.
Note that, if $\eps_{\rb}^0$ is achievable in this given mesh, then the algorithm proceed as the standard greedy method. 

Algorithm \ref{greedy_fixed} is an exact energy error certified PD-RB greedy algorithm on a fixed mesh.
\begin{algorithm}[!ht]
\KwIn{
Training set $\Xi_{\mathtt{train}} \subset \cD$, 
initial FE accuracy tolerance $\eps_h^0 = 0$, 
initial RB tolerance $\eps_{\rb}^{0}$, 
some $\rferb >1$,
maximal RB dimension $N_{\max}$, initial parameter $\mu_1$, $N=1$.
}
\KwOut{
$N$, $\eps_h^N$, $\eps_{\rb}^N$, $V_{\rb}^N$, $\Sigma_{\rb}^N$.
}
\BlankLine
\begin{algorithmic} [1]
\STATE 
Compute FE solutions $u_h(\mu_N)$ and $\sigma_h(\mu_N)$.

\STATE
Let $\eps_h^N=\max\{\eta_{h}(u_h,\sigma_h;\mu), \eps_h^{N-1}\}$ 
and  $\eps_{\rb}^N = \max\{ \rferb \eps_h^N, \eps_{\rb}^{N-1}\}$.

\STATE Update $V^N_{\rb} $ and $\Sigma^N_{\rb}$. 

\STATE 
Choose 
$\mu_{N+1} = \mbox{argmax}_{\mu\in \Xi_{\mathtt{train}}} \eta_{\rb}(u_{\rb}^N,\sigma_{\rb}^N;\mu).
$

\STATE If $\eta_{\rb}(u_{\rb}^N,\sigma_{\rb}^N;\mu_{N+1}) > \eps_{\rb}^N$ and $N<N_{\max}$, then set $N=N+1$, {\bf goto} 1. Otherwise, {\bf terminate}. 
\end{algorithmic}\caption{An exact energy error certified PD-RB greedy algorithm with an adaptive RB tolerance on a fixed mesh (an adaptive RB tolerance greedy algorithm)}
\label{greedy_fixed}
\end{algorithm}

Since the computations of RB primal and dual solutions are based on optimizations, for a fixed $\mu$, the following is true: 
\beq \label{SA}
\eta_{\rb}(u_{\rb}^N,\sigma_{\rb}^N;\mu) \leq \eta_{\rb}(u_{\rb}^{N-1},\sigma_{\rb}^{N-1};\mu).
\eeq
Which means, the more basis functions, the smaller the energy error. The saturation assumption (Definition 3.3 of \cite{HSZ:14}) is satisfied with the  saturation constant being exact one. Then
the improved algorithm based on the saturation trick (Algorithm 2 of \cite{HSZ:14}) can be used to avoid going through all $\mu\in \Xi_{\mathtt{train}}$ in Step 4 of Algorithm \ref{greedy_fixed}. 
In the improved algorithm of saturation trick, when looping for all $\mu\in \Xi_{\mathtt{train}}$, we can skip those $\mu$'s whose error estimators in the previous stages (with smaller RB dimensions) are already smaller than the current temporary maximum error, since these parameters will not be chosen in this round.

%

\subsection{Numerical experiments for Algorithm 1}
In our numerical experiments, we use the notation {\tt maxerror} in the tables and figures:
$$
{\tt maxerror} = \mbox{max}_{\mu\in \Xi_{\mathtt{train}}} \eta_{\rb}(u_{\rb}^N,\sigma_{\rb}^N;\mu).
$$

\noindent ({\bf Setting for the numerical experiment for Algorithm 1})
{\em For the test problem given at the beginning of the section,
we use a uniform mesh with  $393216$ elements.  Thus the number of degrees of freedom (DOFs) for the primal problem is $197633$, and the number of DOFs for the dual problem is $984064$. We choose a fixed $\Xi_{\mathtt{train}}$ to be a random uniform discretization of $[-2,2]^2$ of $10^5$ points.

%


The original $\eps_{\rb}^0$ is chosen to be $10^{-3}$, which is a too strong requirement for the current mesh. We choose $\mu_1 = [0,0]$. The max RB dimension $N_{\max}$ is chosen to be 20, which is more than enough for all our tests in the paper.}


\begin{table}[htp]
\caption{Convergence results of PD-RBM for test diffusion problem on a fixed mesh with Algorithm 1 ($\mathtt {r_{rb,fe}}=2$)}
\begin{center}
\begin{tabular}{|c|r|r|r|r|r|}
 \hline
       RB Dim  & 1 & 2 & 3&4\\
 \hline
$\mu_{[1]}$ &    0.0000&
    -1.9996&
    1.9936&
   -1.9970\\
 \hline
$\mu_{[2]}$ &    0.0000&
       1.9808&
   -1.9999&
   -1.0199\\
 \hline
 $\eps_{h}$ &          
 	0.0077&
    0.0354&
    0.0354&
    0.3261\\
       \hline
 $\eps_{\rb}$ &      
 	0.0154&
    0.0708&
    0.0708&
    0.6522\\
    \hline
       maxerror &      3.6569&
    2.3279&
    0.6066&
    0.3506\\
 \hline
  skipped &         0&
        90693&
      55251&
      83013\\
    \hline
 test error&
  && &
    0.3461
    \\
    \hline
\end{tabular}
\end{center}
\label{tab-d2}
\end{table}%

\begin{figure}[!ht]
    \centering
    \begin{minipage}[!hbp]{0.33\linewidth}
        \includegraphics[width=0.99\textwidth,angle=0]{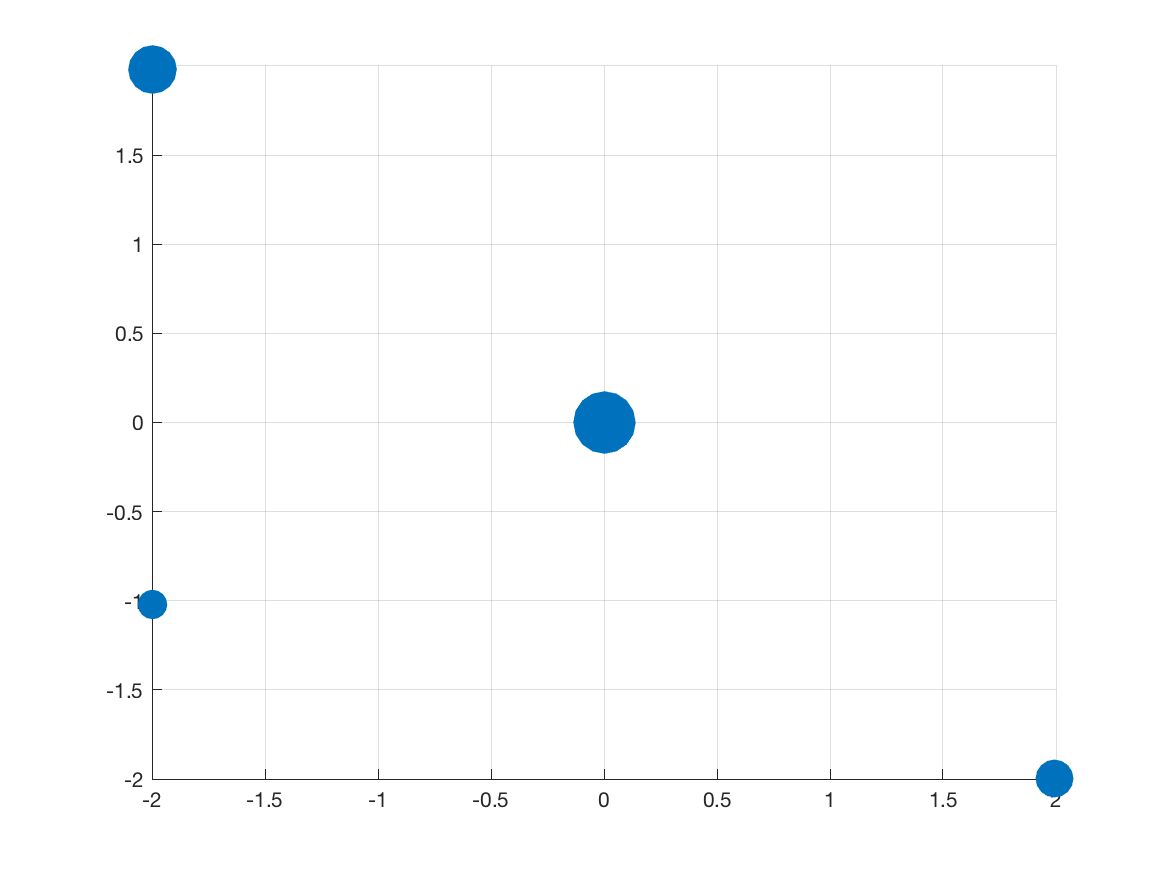}
        \end{minipage}%
        \quad
    \begin{minipage}[!htbp]{0.33\linewidth}
        \includegraphics[width=0.99\textwidth,angle=0]{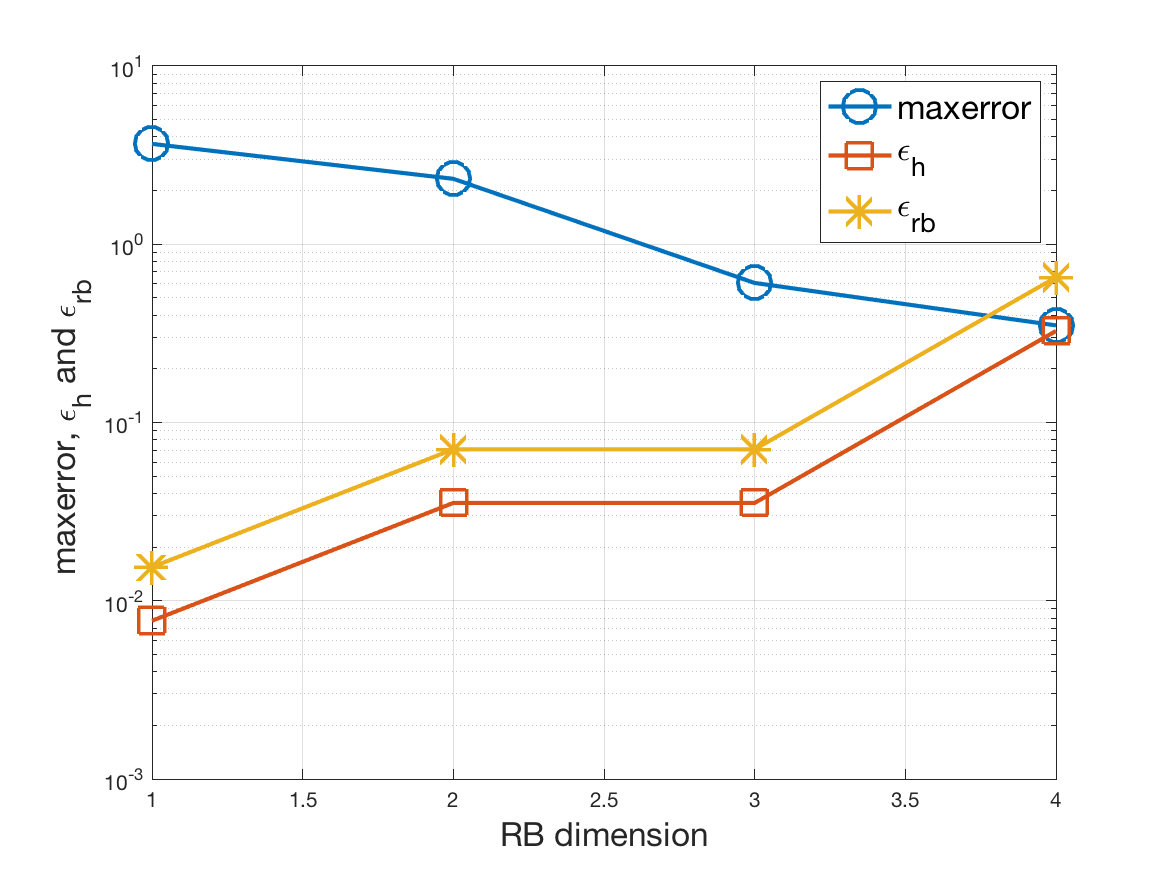}
        \end{minipage}
        \caption{Selection of RB points $S_N$ (left) and convergence history of RB maxerror (right) for test diffusion problem on a fixed mesh with PD-RBM and  Algorithm 1 ($\mathtt {r_{rb,fe}}=2$)}%
            \label{rberror_d_fixed}
\end{figure}
From computational results on Table \ref{tab-d2}, we see that with one RB function, the RB tolerance is already updated to $0.0154$. After the second RB function is picked, due to boundary and interior layers and corner singularity, this parameter one has big FE approximation error, thus the RB tolerance is further updated to $0.0708$. The algorithm terminated with $4$ basis function.

%
%
On the left of Fig. \ref{rberror_d_fixed}, we depict the RB parameter set $S_N$. The bigger the size, the earlier the point is selected.  We also show the numerical values of the $4$ $\mu_i$ on the second and third rows of Table \ref{tab-d2}. On the 4th and 5th rows of Table \ref{tab-d2}, $\eps_{h}^i$ and $\eps_{\rb}^i$ are shown. 
It is clear that both of them are monotonically non-decreasing. The final $\eps_\rb$ is quite big compared to the common practice  of the standard residual type RB a posteriori error estimator. 

On the right of Fig. \ref{rberror_d_fixed}, we show the decay of {\tt maxerror} and the increasing of $\eps_h$ and $\eps_{\rb}$ with respect to RB space dimensions. We can see a clear spectral convergence of {\tt maxerror} and the adaptivity of  $\eps_h$ and $\eps_{\rb}$ from the figure. The algorithm stops when {\tt maxerror} $\leq \eps_\rb$. 

The 7th row of Table \ref{tab-d2} , the row of "skipped", denotes the number of parameter in the train set being skipped in step 4 of Algorithm 1 by the algorithm of saturation trick. We find that the algorithm of saturation trick saves a large amount computations of the loop in $\Xi_{\mathtt{train}}$.

We also test the quality of the resulting RB set by running 10000 random online test cases. The max error of these 10000 test cases is $0.3461$, which is very similar to the final {\tt maxerror} $0.3506$, and is less than our $\eps_{\rb} = 0.6522$.  For the later numerical tests, we test the online RB problems with the same set of random parameters, and we list the maximum test combined RB energy error on the last row of the numerical results table.


On Fig. \ref{rberror_d_fixed11}, we show the numerical results for the same test setting with $\mathtt {r_{rb,fe}}=1.1$. Note that for this case, the same number of RB basis function is computed as the $\mathtt {r_{rb,fe}}=2$ case.

In both experiments  ($\mathtt {r_{rb,fe}}=2$ and $1.1$) for the test parametric diffusion problem on a uniform mesh with PD-RBM and Algorithm 1, we clearly find that for a non-optimal uniform mesh, even with quite a large number of DOFs, the final $\eps_{h}$ and $\eps_{\rb}$ can be quite big, and the RB dimension is often small.

\begin{figure}[!ht]
    \centering
    \begin{minipage}[!hbp]{0.33\linewidth}
        \includegraphics[width=0.99\textwidth,angle=0]{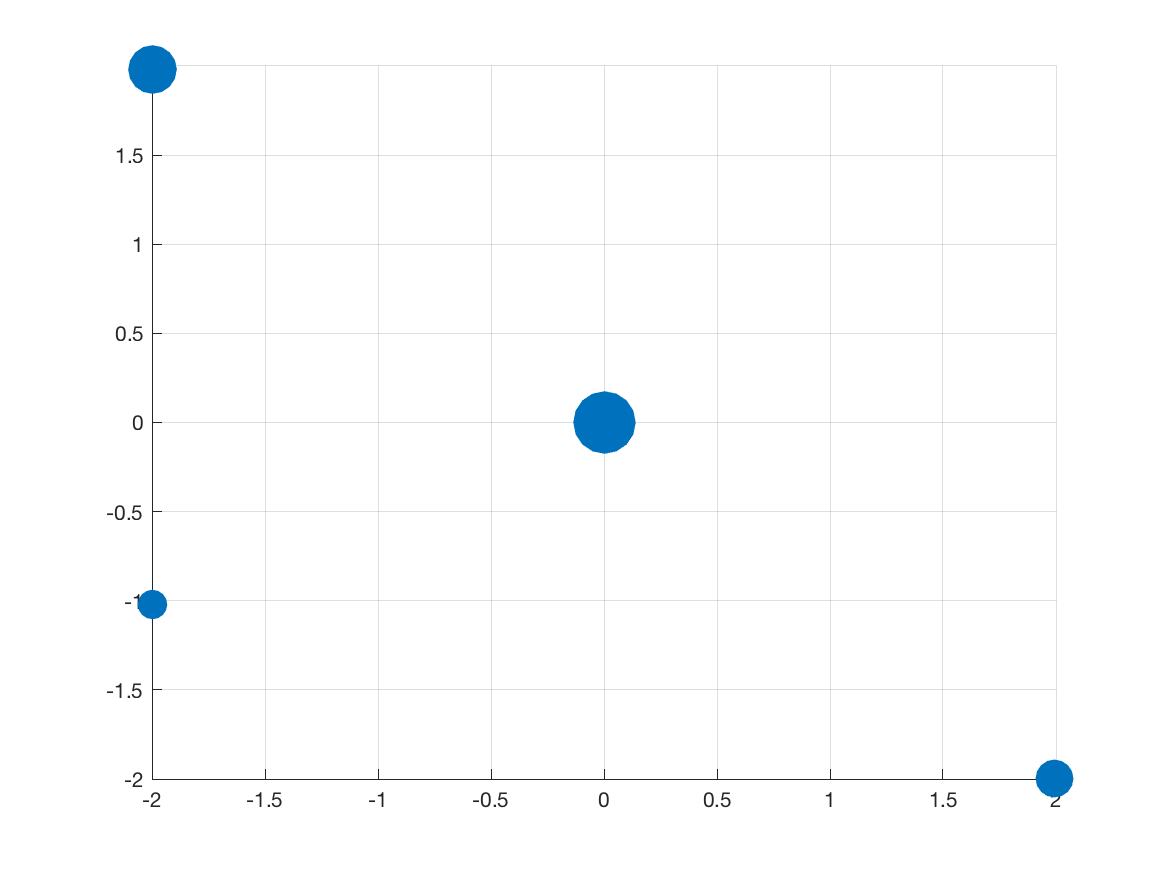}
        \end{minipage}%
        \quad
    \begin{minipage}[!htbp]{0.33\linewidth}
        \includegraphics[width=0.99\textwidth,angle=0]{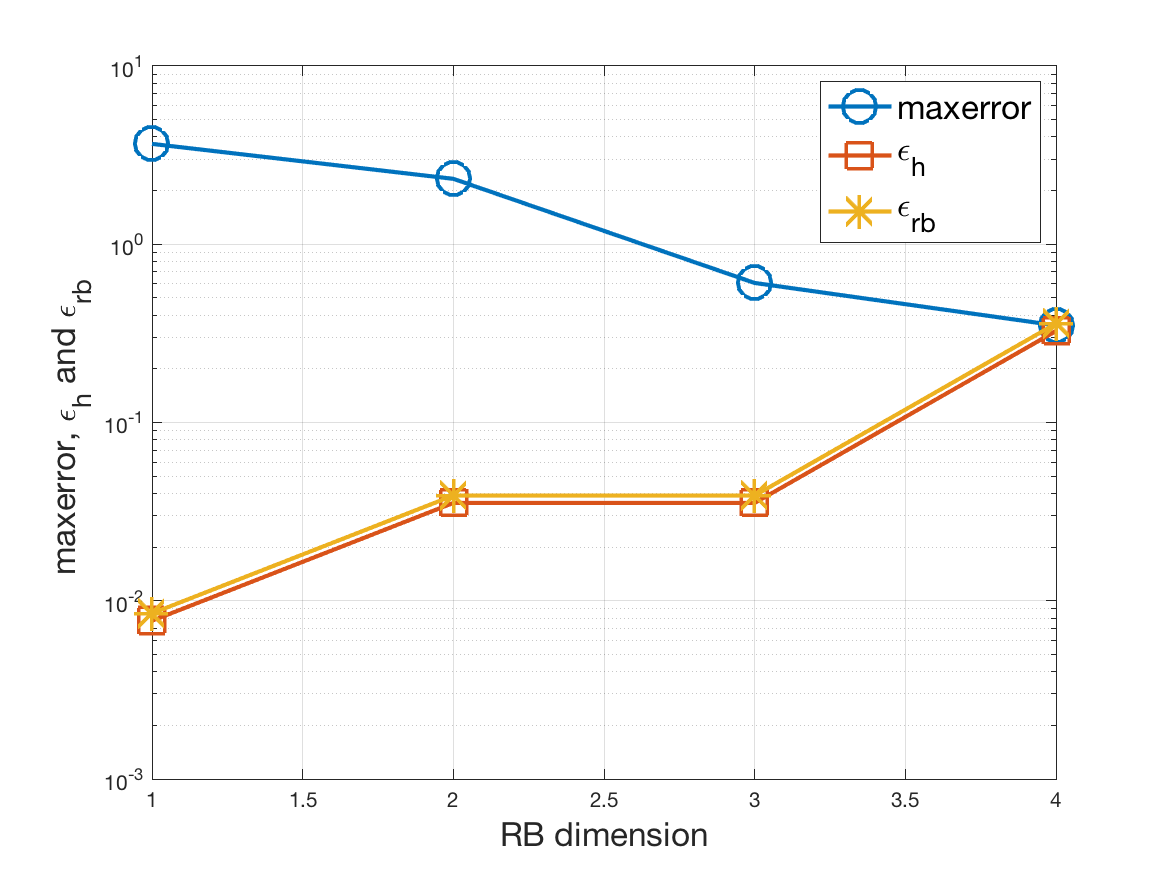}
        \end{minipage}
        \caption{Selection of RB points $S_N$ (left) and convergence history of RB maxerror (right) for test diffusion problem on a fixed mesh with PD-RBM and  Algorithm 1 ($\mathtt {r_{rb,fe}}=1.1$)}%
            \label{rberror_d_fixed11}
\end{figure}

\begin{rem}
From the experiments, we can see that eventually, the quality of the RBM is controlled by the last {\tt maxerror} (the number depends on $\Xi_{\mathtt{train}}$). This number can be used as the quality indicator of the final RB set, instead of $\eps_{\rb}^N$, although these two numbers are often of the same scale and very close, since
$$
\eps_{h}^N \leq \mathtt{maxerror} \leq \eps_{\rb}^N,
$$
where $N$ is the final RB space dimension.
\end{rem}

\begin{rem}
As already did in the numerical experiments of Section 7.1 of \cite{Yano:18}, even with more RB basis functions, the exact error of a RB solution will stop decreasing at some point. This is due to the result of Theorem \ref{comp_fe_rb}, that a RB solution cannot out-perform  its corresponding FE solution. From our 10000 random test, we also see the max error of the 10000 test is 0.3461, which is between the final $\eps_{h}^N$ and $\eps_{\rb}^N$. This suggests that adaptive mesh refinement is necessary to improve the accuracy.
\end{rem}

\subsection{Greedy algorithm with an adaptive mesh and a fixed RB tolerance (Algorithm 2)}
In this subsection, we discuss a greedy algorithm with a spatial adaptivity for the field variable and a fixed RB tolerance. 

As seen in numerical examples in Section 7.3, for parameter-dependent PDEs, a uniform mesh is often not the best choice for approximations due to possible singularities, sharp layers, and other solution features. So it is reasonable to use an adaptive refined mesh to compute RB snapshots. On the other hand, when assembling the RB offline matrices, we need to compute the inner products of different RB snapshots. It is preferred to make sure that all RB snapshots to be computed on a same mesh. The following algorithm is basically the same spatio-parameter adaptivity algorithm of \cite{Yano:16,Yano:18}. In our context, all error estimators (FE or RB) are based on the same primal-dual gap estimator.  Other approaches combined RBM and adaptive FEM can be found in \cite{OS:15,URL:16,ASU:17}. 
We list the detailed algorithm in Algorithm  \ref{greedy_adaptive}.

\begin{algorithm}[!ht]
\KwIn{
Training set $\Xi_{\mathtt{train}} \subset \cD$, 
FE  tolerance $\eps_h$, 
some $\rferb >1$,
RB tolerance $\eps_{\rb} = \rferb\eps_h$, 
maximal RB dimension $N_{\max}$, 
initial coarse FE mesh $\cT_0$,
$N=1$,  $\mu_1$,  flag \texttt{refined}=0.
}
\KwOut{
$N$, 
$V_{\rb}^N$, $\Sigma_{\rb}^N$.
}
\BlankLine
\begin{algorithmic} [1]
\STATE Starting from mesh $\cT_{N-1}$, adaptively compute FE solutions $u_h(\mu_N)$ and $\sigma_h(\mu_N)$
with $\eta_{h,T}(u_h,\sigma_h;\mu)$ as the FE error indicator, compute until 
$\eta_{h,T}(u_h,\sigma_h;\mu)\leq \eps_h$.
The final mesh is $\cT_N$. Modify the flag \texttt{refined} $=1$, if $\cT_N \neq \cT_{N-1}$.


\STATE If \texttt{refined} $=1$, recompute $u_h(\mu_i)$ and $\sigma_h(\mu_i)$, $i=1, \cdots, N-1$ on the current mesh
$\cT_N$ and reset the flag \texttt{refined} $=0$.

\STATE 
Update $V^N_{\rb} $ and $\Sigma^N_{\rb}$.

\STATE 
Choose 
$\mu_{N+1} = \mbox{argmax}_{\mu\in \Xi_{\mathtt{train}}} \eta_{\rb}(u_{\rb}^N,\sigma_{\rb}^N;\mu).$

\STATE If $\eta_{\rb}(u_{\rb}^N,\sigma_{\rb}^N;\mu) > \eps_{\rb}$ and $N<N_{\max}$, then set $N=N+1$, {\bf goto} 1. Otherwise, {\bf terminate}. 

\end{algorithmic}\caption{A primal-dual exact energy error certified PD-RB greedy algorithm with an adaptive mesh  and a fixed RB tolerance (an adaptive mesh refinement greedy algorithm)}
\label{greedy_adaptive}
\end{algorithm}

Since 
$V_{\rb}^N = \mbox{span} \{ u_h(\mu_i)\}_{i=1}^N$ is computed on $\cT_N$ and $\cT_N$ is a refinement of $\cT_{N-1}$, we have confidence that $V_{\rb}^{N}$ has a better quality than $V_{\rb}^{N-1}$. Similarly, we have confidence that $\Sigma_{\rb}^{N}$ is better than $\Sigma_{\rb}^{N-1}$. Thus, for a fixed $\mu$, the following result is still true:
$$
\eta_{\rb}(u_{\rb}^N,\sigma_{\rb}^N;\mu) \leq \eta_{\rb}(u_{\rb}^{N-1},\sigma_{\rb}^{N-1};\mu).
$$
Thus, in the step 4 of Algorithm \ref{greedy_adaptive}, the saturation trick can still be used to avoid computations of all $\mu\in \Xi_{\mathtt{train}}$.

\begin{rem}[Computational Cost]
In Step 2 of Algorithm  \ref{greedy_adaptive}, we need to recompute $N$ (number of RB basis functions) both primal and dual FE solutions. 
Since an optimal adaptive mesh is used, for a complicated problem requiring adaptive mesh, to achieve the same accuracy, a uniform or not optimized mesh requires much more number of points, see for example p.521 of \cite{NSV:09}, where a similar diffusion problem with discontinuous coefficients is discussed. A uniform FEM requires  $10^{20}$ elements to achieve the similar resolution with $2\times 10^3$ element in adaptive FEM. Thus, with a reasonable number of RB basis $N$ (less than 100 typically), the computational cost of Algorithm 2 with adaptive mesh refinements is still much cheaper than a method on a non-optimal mesh. 

In fact, it is a common practice that we re-compute solutions after refinements in the adaptive FEM, see \cite{AO:00,NSV:09,Ver:13}, since an adaptive optimal mesh is much efficient than a bad mesh.

Instead of re-compute the old $u_h(\mu_i)$ and $\sigma_h(\mu_i)$ in Step 2, using the idea from \cite{Yano:16}, we can also interpolate the solution into the final common mesh, since each old mesh is a sub-mesh of the final common mesh. 
This is definitely cheaper than the recomputing approach, although the best approximation properties of \eqref{ap_primalfem2} and \eqref{ap_dualfem} are lost and the interpolation operator between two unstructured grids may not be simple to construct. In this paper, we choose the recomputing approach to keep the theoretical optimal results.
\end{rem}

\subsection{Numerical experiments for Algorithm 2}

In this subsection, we test the Algorithm \ref{greedy_adaptive}. 

\noindent ({\bf Setting for the numerical experiment for Algorithm 2})
{\em 
Choose $\eps_h = 0.08$. The initial mesh $\cT_0$ is shown as the first mesh on Fig. \ref{d_2d_afem_mesh}. The  rest is the same as the setting for the numerical experiment for Algorithm 1.
} 

\begin{figure}[!ht]
    \centering
       \begin{minipage}[!hbp]{0.32\linewidth}
        \includegraphics[width=0.99\textwidth,angle=0]{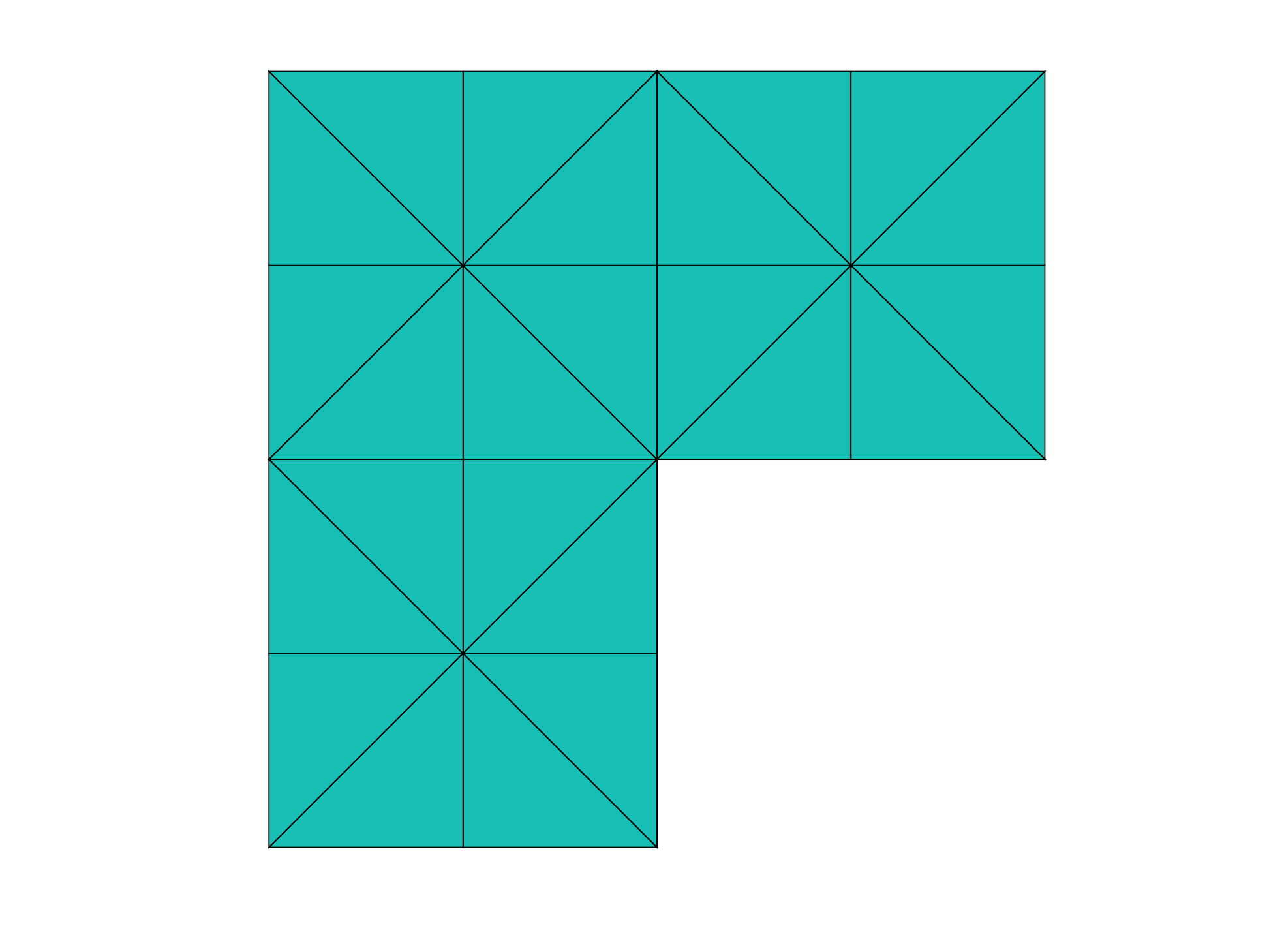}
        \end{minipage}%
  \begin{minipage}[!hbp]{0.32\linewidth}
        \includegraphics[width=0.99\textwidth,angle=0]{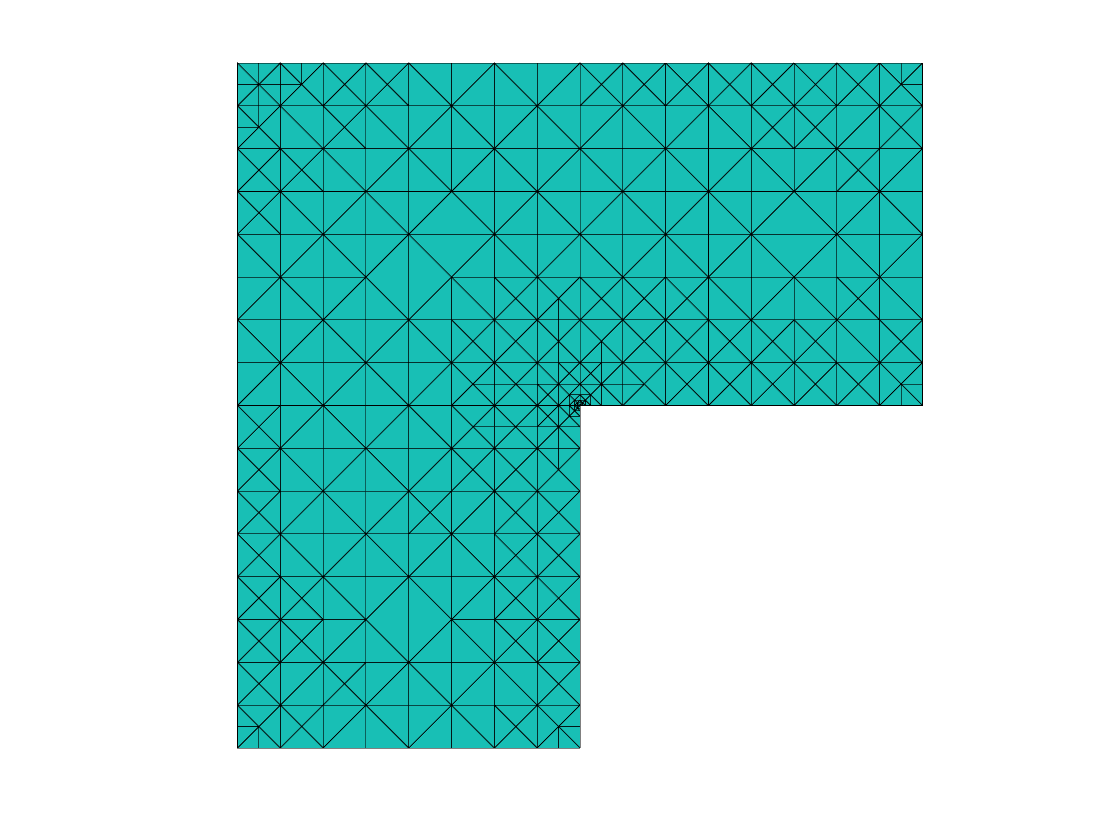}
        \end{minipage}%
    \begin{minipage}[!htbp]{0.32\linewidth}
        \includegraphics[width=0.99\textwidth,angle=0]{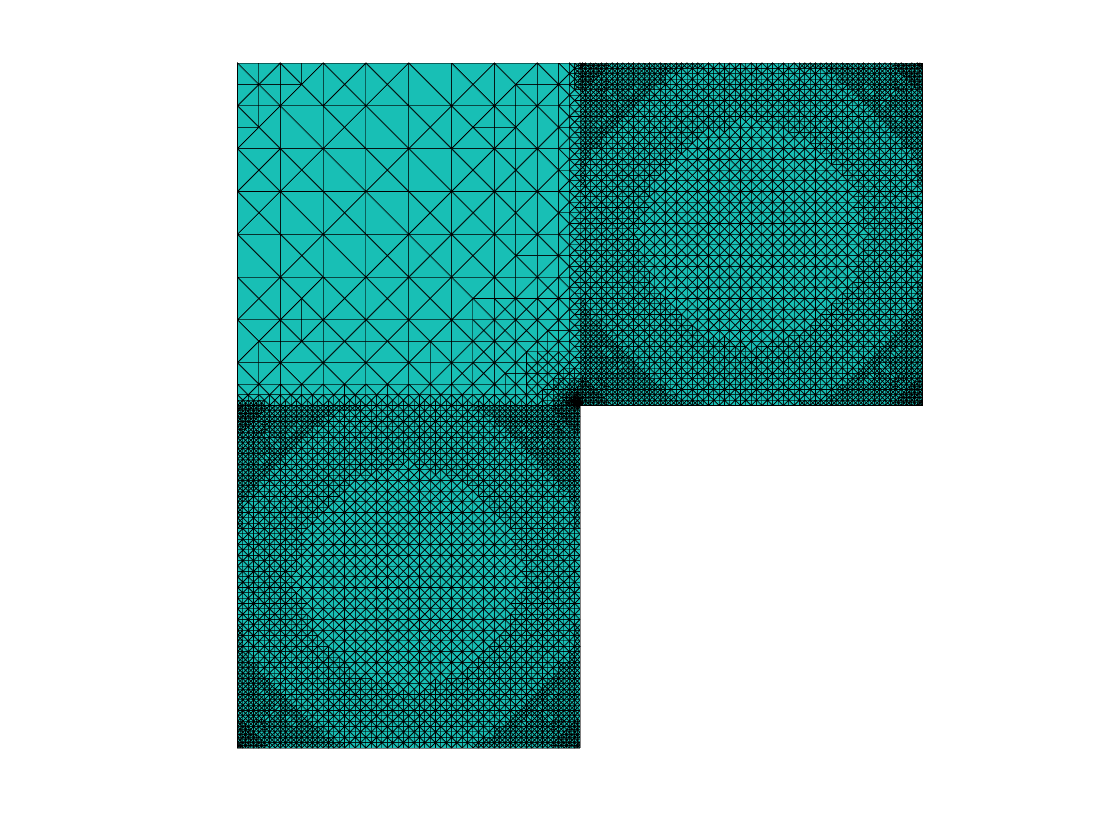}
        \end{minipage}
    \begin{minipage}[!htbp]{0.32\linewidth}
        \includegraphics[width=0.99\textwidth,angle=0]{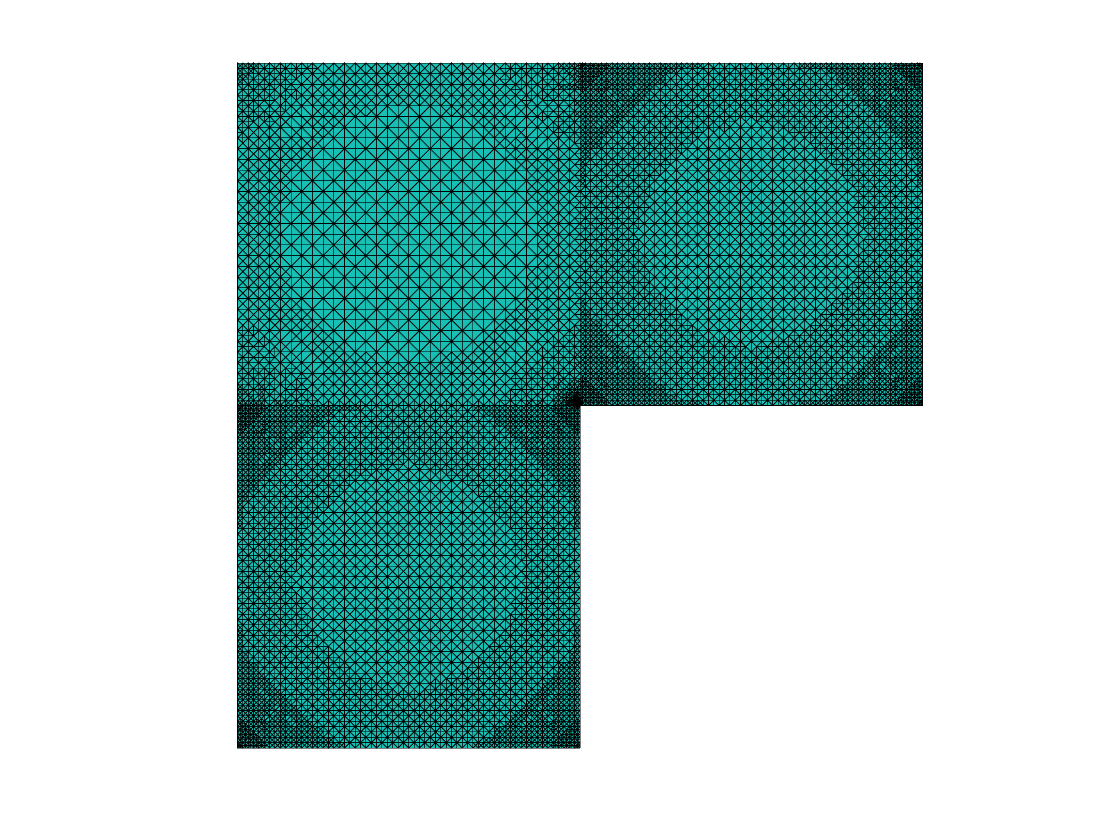}
        \end{minipage}
            \begin{minipage}[!htbp]{0.32\linewidth}
        \includegraphics[width=0.99\textwidth,angle=0]{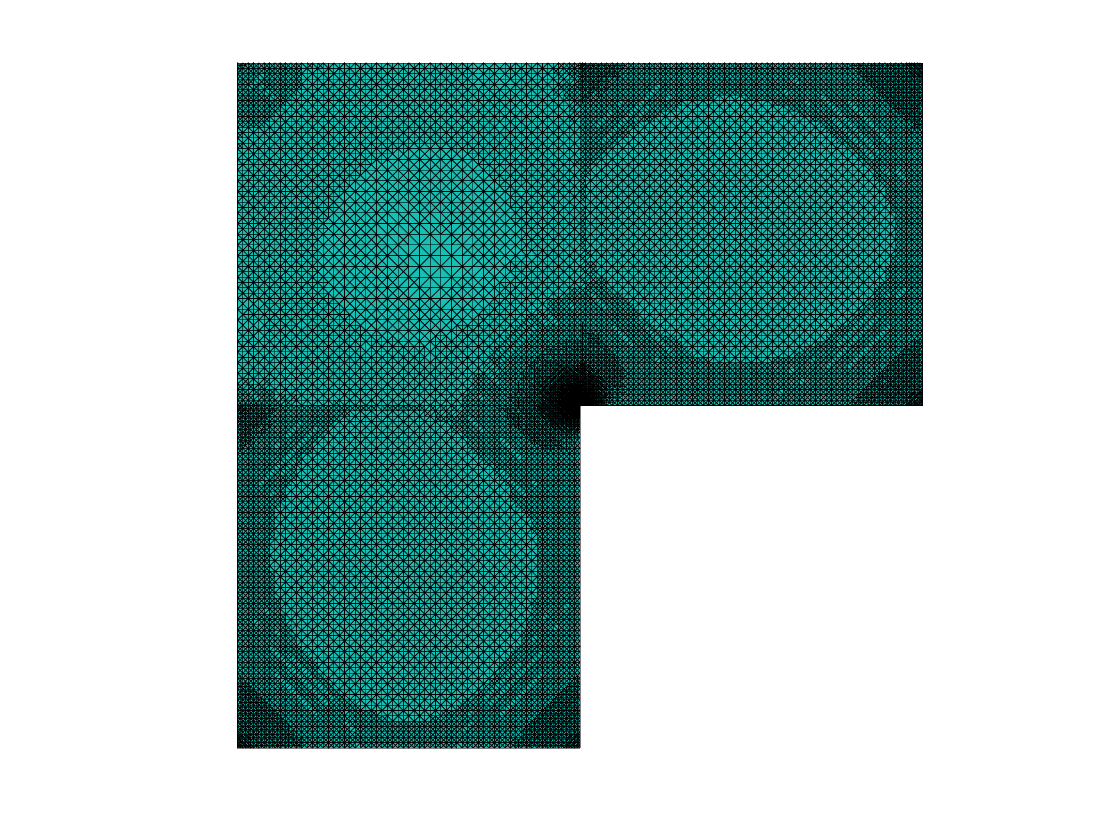}
        \end{minipage}
            \begin{minipage}[!htbp]{0.32\linewidth}
        \includegraphics[width=0.99\textwidth,angle=0]{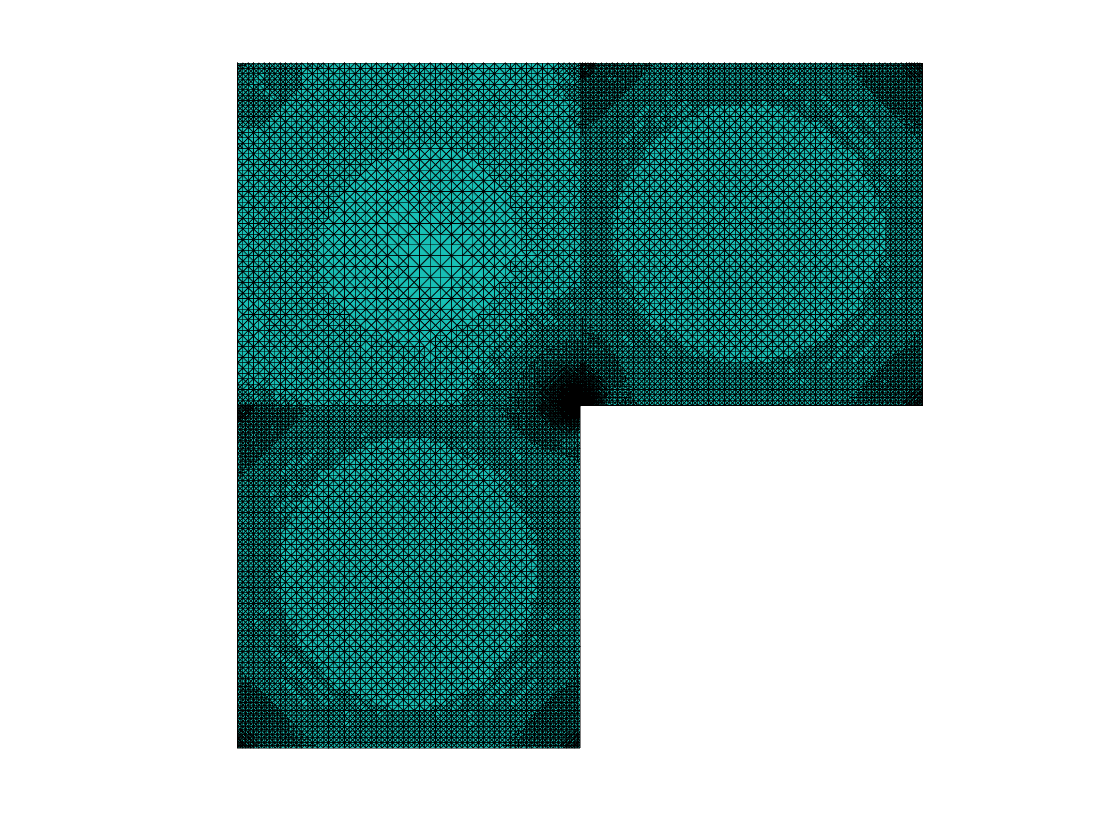}
        \end{minipage}
        \caption{Adaptively refined meshes generated by Algorithm 2 for the test diffusion problem with PD-RBM}%
        \label{d_2d_afem_mesh}
\end{figure}

\begin{table}[!ht]
\caption{Convergence results of PD-RBM for test diffusion problem on an adaptive mesh with Algorithm 2 ($\mathtt {r_{rb,fe}}=2$)}
\begin{center}
\begin{tabular}{|c|r|r|r|r|r|}
 \hline
       RB Dim  & 1 & 2 & 3 &4&5\\
 \hline
$\mu_{[1]}$ &
	0&      
    -1.9996 &
    1.9936&
   -1.9970&
   -1.9976\\
    \hline
    $\mu_{[2]}$ &
	0&      
    1.9808 &
   -1.9999&
   -1.0199&
   -0.2083\\
    \hline
maxerror &      3.7450 &
    2.3489&
    0.6195&
    0.1696&
    0.0995\\
 \hline
 $\cN_{\fe,p}$ &  
       351 &
        9651&
       11148&
       21552&
       26249\\
\hline
 $\cN_{\fe,d}$ &  
       1582 &
       47290&
       54643&
      106319&
      129600\\
 \hline
  skipped &  
        0&
          90695 &
       54872&
       57756&
       76380
      \\
      \hline
refined &
       1 &
       1&
       1&
       1&
        1
      \\
       \hline
  test error &
    &&&   &
        0.0993
      \\
      \hline
\end{tabular}
\end{center}
\label{tab-d2_adaptive}
\end{table}%

\begin{figure}[!ht]
    \centering
   \begin{minipage}[!hbp]{0.33\linewidth}
        \includegraphics[width=0.99\textwidth,angle=0]{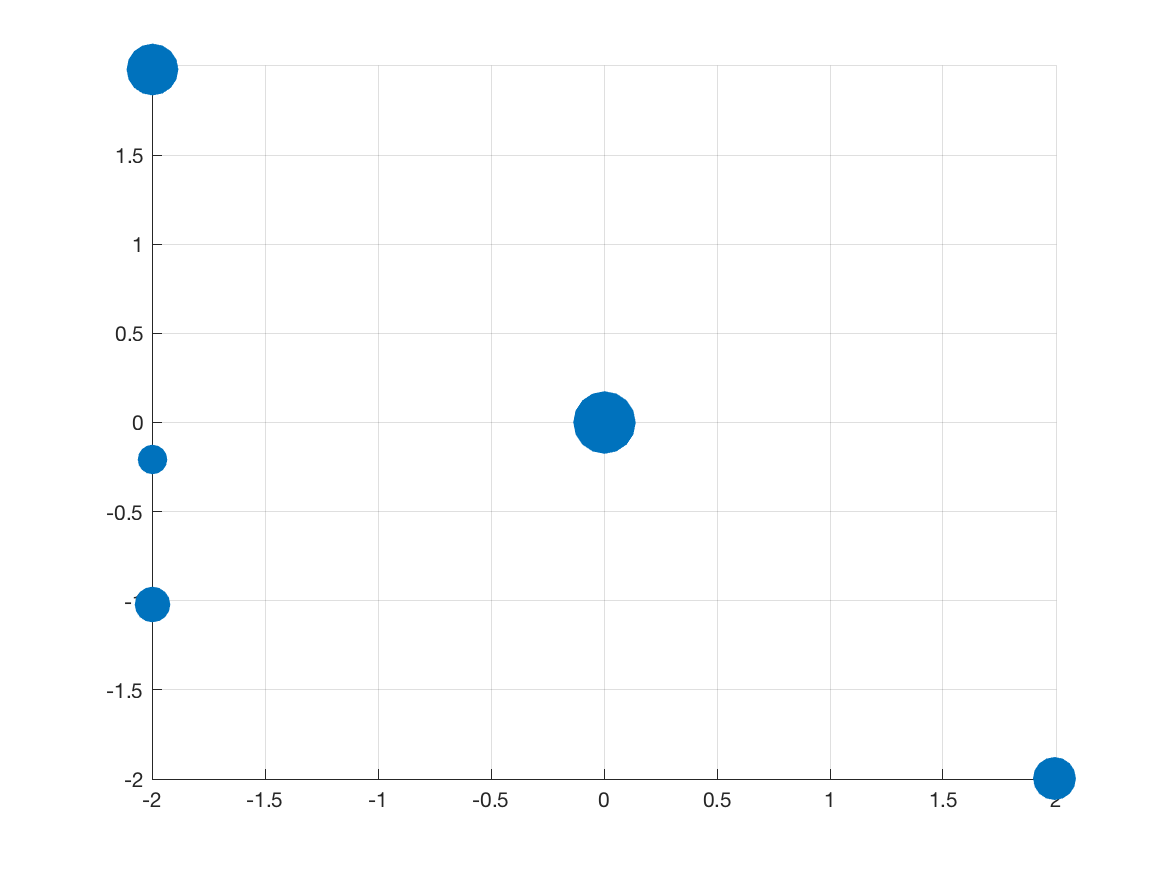}
        \end{minipage}%
        \quad
    \begin{minipage}[!htbp]{0.33\linewidth}
        \includegraphics[width=0.99\textwidth,angle=0]{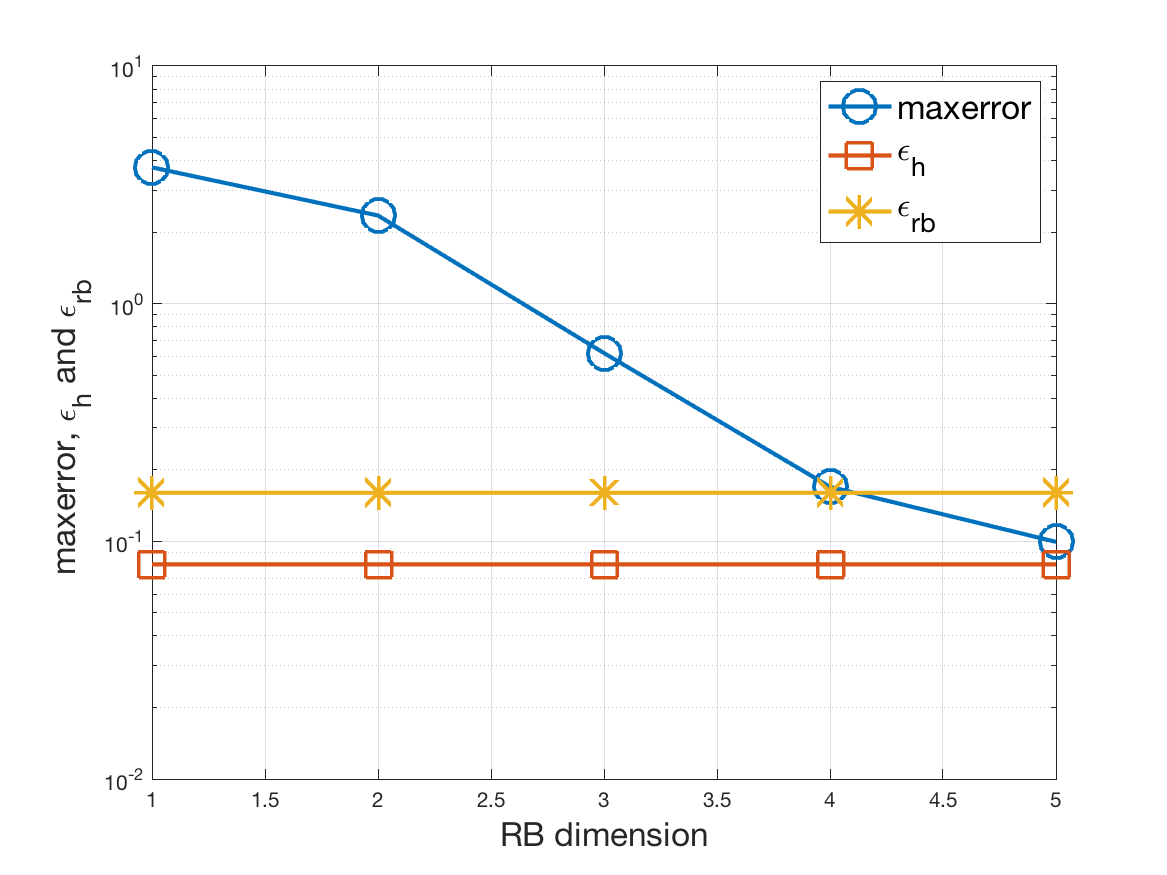}
        \end{minipage}
        \caption{Selection of RB points $S_N$ (left) and convergence history of RB maxerror (right) for test diffusion problem on a adaptive mesh with PD-RBM and  Algorithm 2 ($\mathtt {r_{rb,fe}}=2$)}%
        \label{d_2d_afem}
\end{figure}
%
%

The numerical test results can be found in Table \ref{tab-d2_adaptive} and Fig. \ref{d_2d_afem} for $\mathtt {r_{rb,fe}}=2$ ($\eps_{\rb} = 0.16$). We need $5$ RB snapshots to reduce {\tt maxerror} to be less than $\eps_{\rb}=0.16$. From the 8th row of table, we see that the mesh is refined for $5$ times (including the mesh generated for $\mu_1$). We show those meshes on  Fig. \ref{d_2d_afem_mesh}. The latter meshes are the refinements of previous ones. We can clearly see that refinements on different areas of the domain due to the features of solutions corresponding to later $\mu_i\in S_N$.

On the rows 5 and 6 of  Table \ref{tab-d2_adaptive}, we list the number of FE DOFs for the primal and dual problems. All the numbers are increasing due to mesh refinements. But compared to Table \ref{tab-d2}, we can clearly see that number of DOFs used with adaptive mesh refinements is much smaller than a fixed mesh for a similar (even smaller) $\eps_{\rb}$.
Also, we find that for the adaptive mesh greedy algorithm, the algorithm of saturation trick also saves a good amount of computation in the loop for all $\mu \in \Xi_{\mathtt{train}}$, see the 7th row of  Table \ref{tab-d2_adaptive}.
%

\begin{figure}[!ht]
    \centering
   \begin{minipage}[!hbp]{0.33\linewidth}
        \includegraphics[width=0.99\textwidth,angle=0]{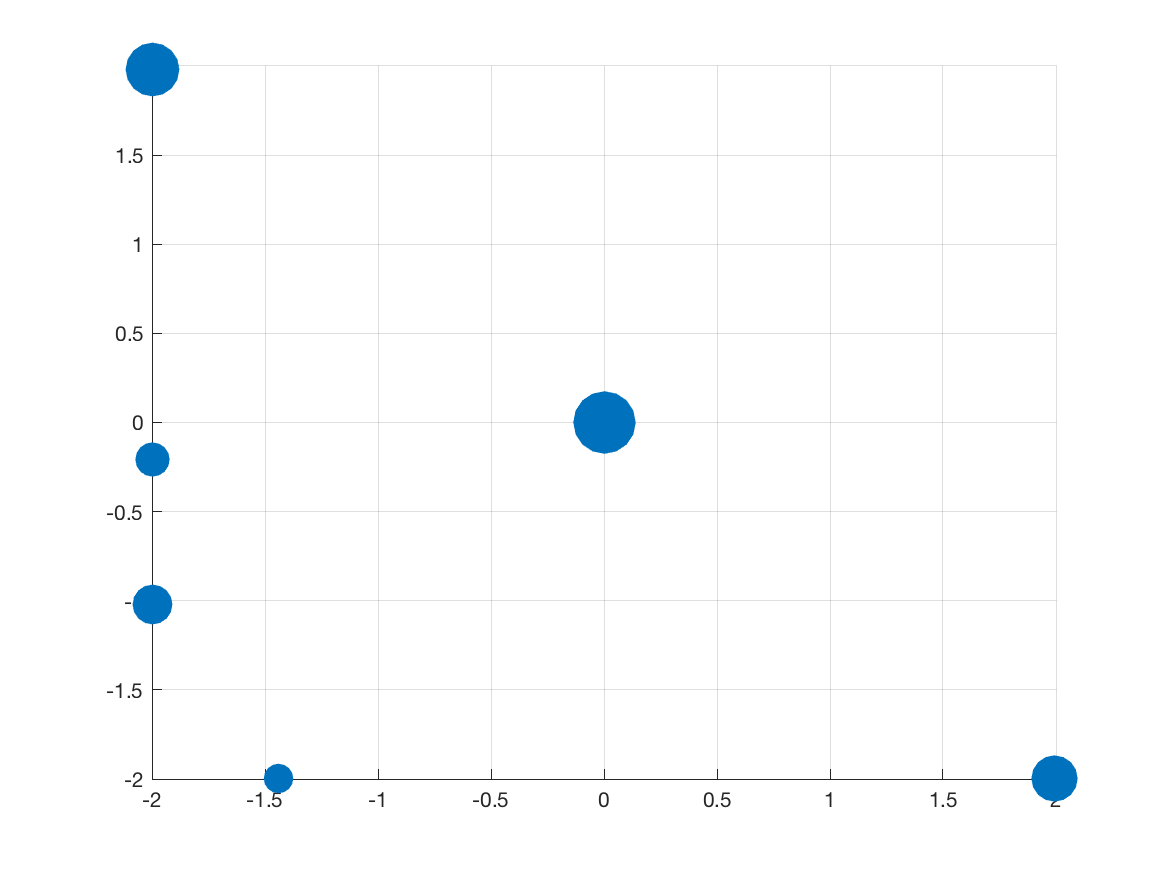}
        \end{minipage}%
        \quad
    \begin{minipage}[!htbp]{0.33\linewidth}
        \includegraphics[width=0.99\textwidth,angle=0]{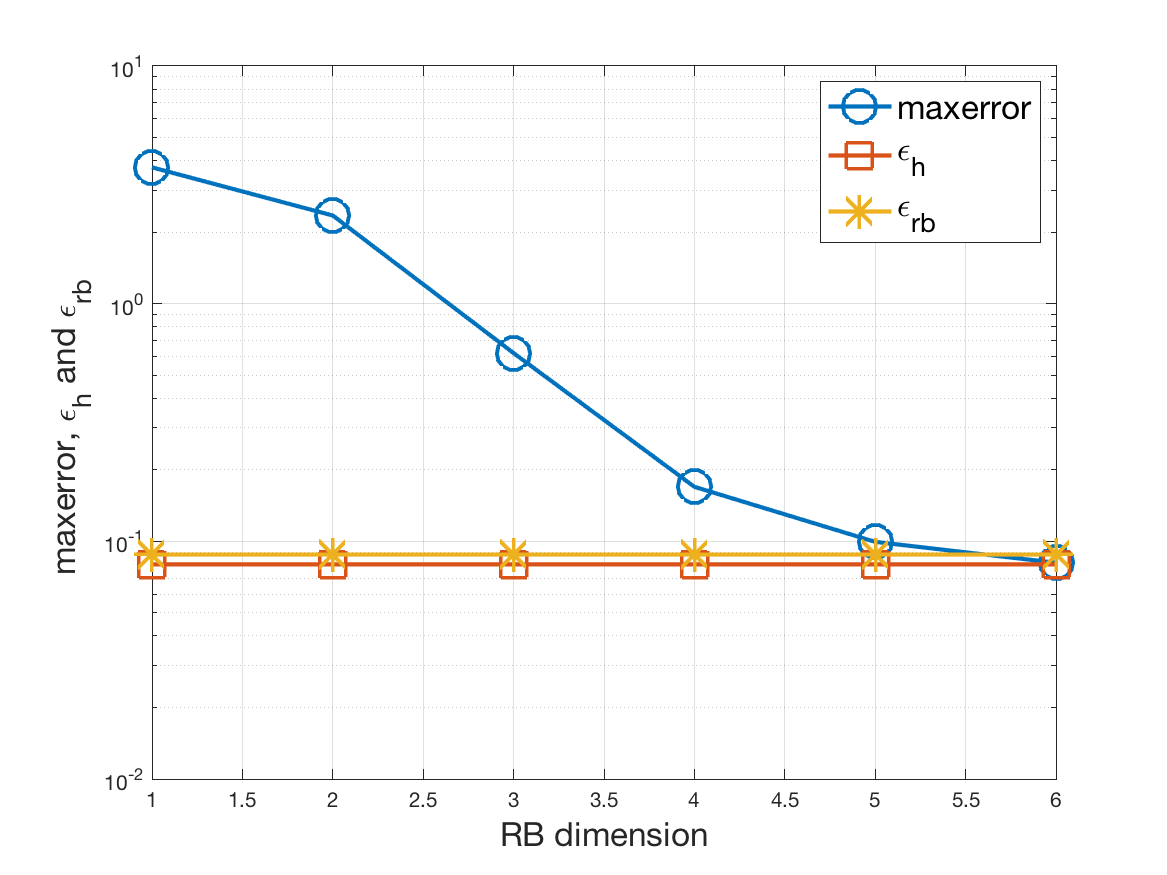}
        \end{minipage}
        \caption{Selection of RB points $S_N$ (left) and convergence history of RB maxerror (right) for test diffusion problem on a adaptive mesh with PD-RBM and Algorithm 2 ($\mathtt {r_{rb,fe}}=1.1$)}%
        \label{d_2d_afem11}
\end{figure}

The numerical test results for $\eps_h = 0.08$ and $\eps_{\rb} = 0.88$ ($\mathtt {r_{rb,fe}}=1.1$)  can be found in Fig. \ref{d_2d_afem11}. We need $6$ RB snapshots to reduce {\tt maxerror} to be less than $\eps_{\rb}=0.88$. 

\subsection{A bi-adaptive greedy algorithm with a balanced mesh for the whole RB parameter set}
In this greedy algorithm, under the situation that the computational resource may be limited, 
we plan to do both the spatial and RB tolerance adaptivities and to generate a mesh which is balanced for the whole RB parameter set $S_N$.
It is clear that different parameters $\mu$ may require very different FE meshes. 
In Fig. \ref{mesh_d_diff_para}, we show three meshes and  corresponding solutions with parameter points $(0,0)$, $(-2,2)$, and $(2,-2)$ generated by the PD-FE error estimator $\eta_{h}$ until the $\eta_{h} \leq 0.1$ for the diffusion equation. 
%

\begin{figure}[!ht]
    \centering
  \begin{minipage}[!hbp]{0.33\linewidth}
        \includegraphics[width=0.99\textwidth,angle=0]{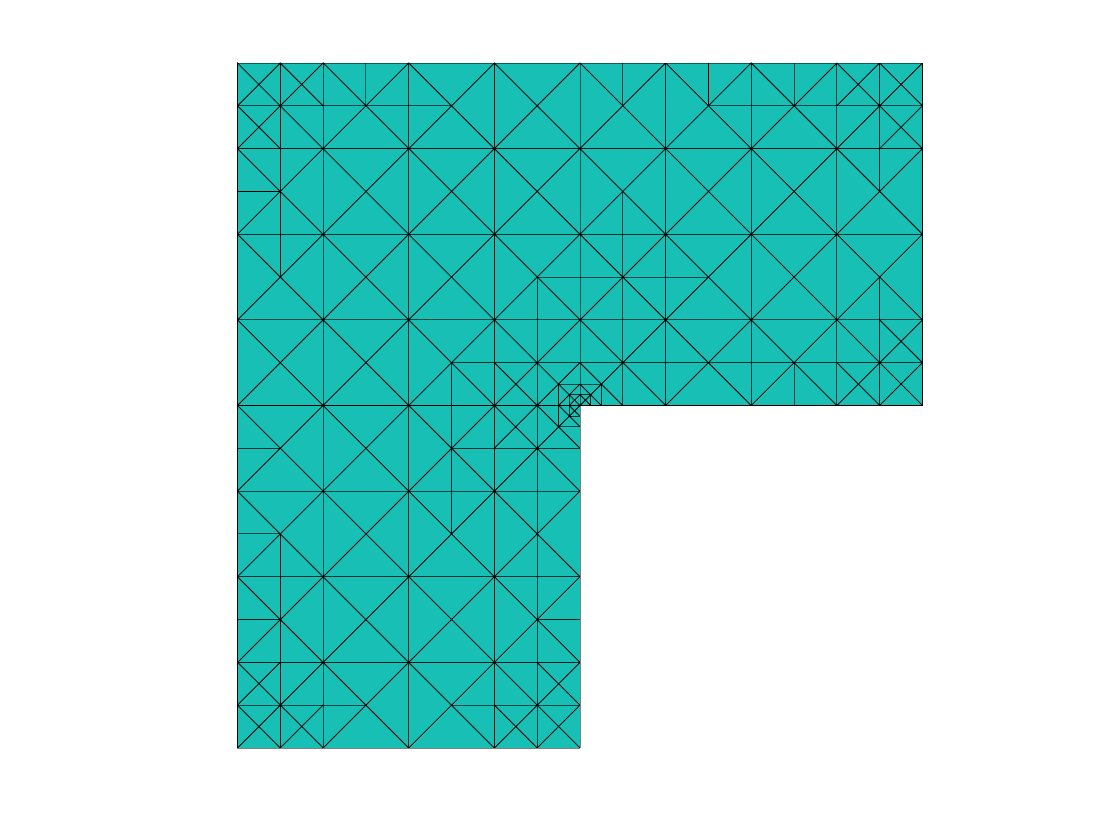}
        \end{minipage}%
    \begin{minipage}[!htbp]{0.33\linewidth}
        \includegraphics[width=0.99\textwidth,angle=0]{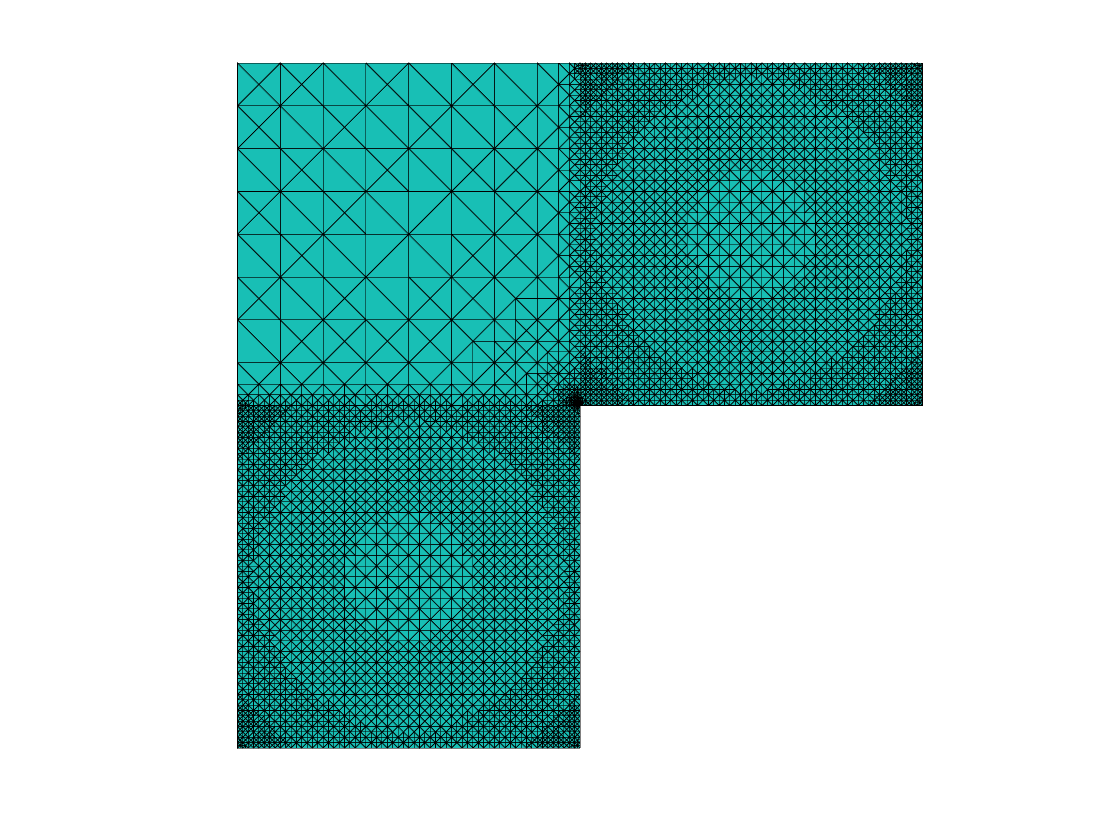}
        \end{minipage}
    \begin{minipage}[!htbp]{0.33\linewidth}
        \includegraphics[width=0.99\textwidth,angle=0]{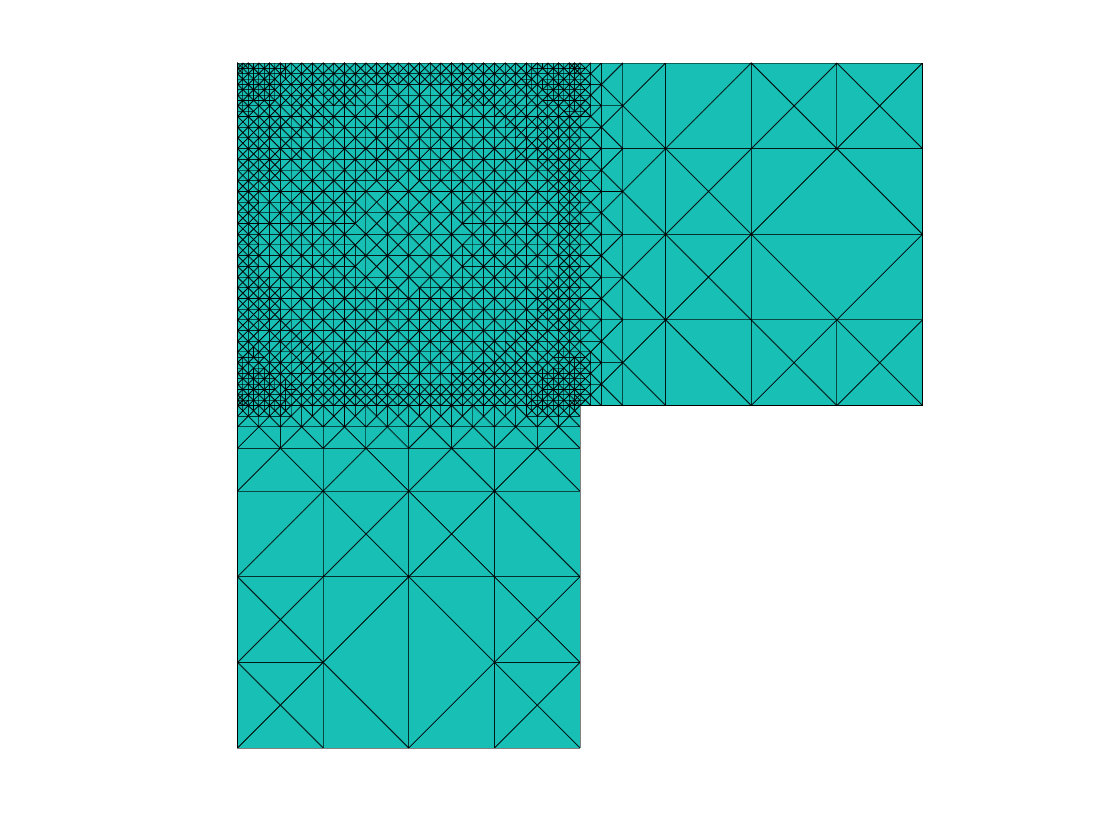}
        \end{minipage}\\
        \centering
            \begin{minipage}[!htbp]{0.32\linewidth}
        \includegraphics[width=0.99\textwidth,angle=0]{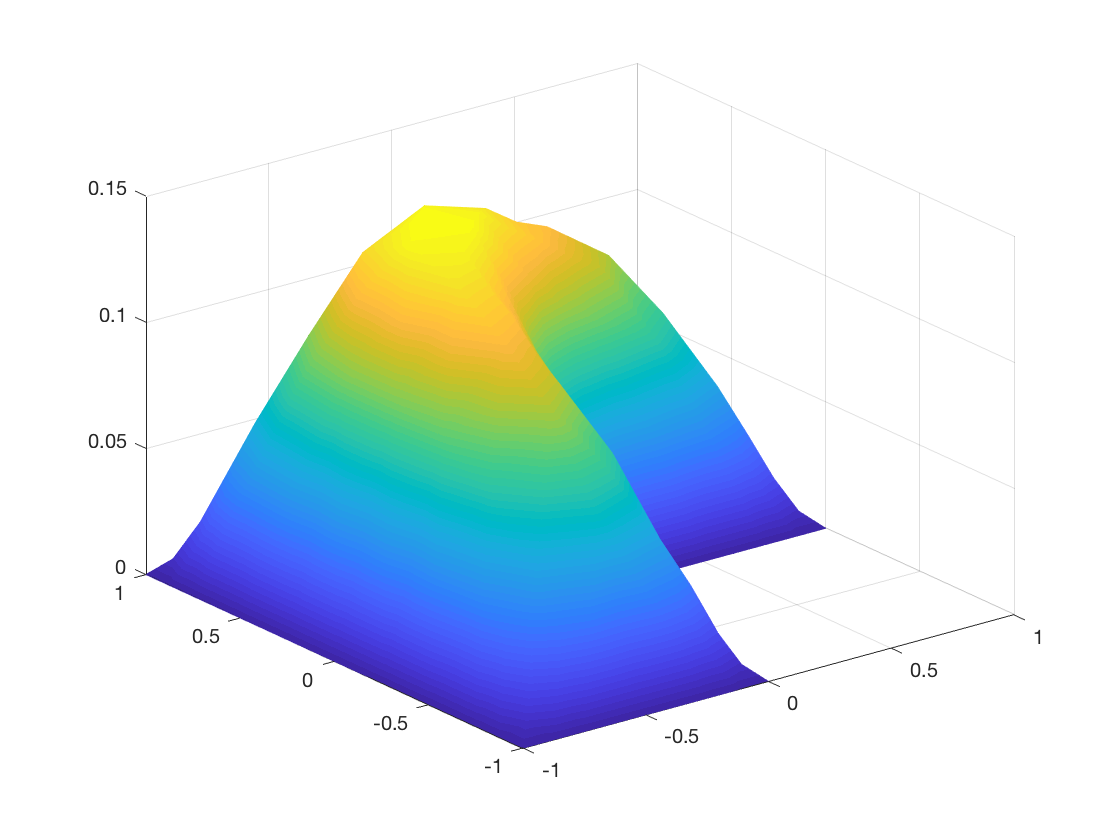}
        \end{minipage}
        \begin{minipage}[!htbp]{0.32\linewidth}
        \includegraphics[width=0.99\textwidth,angle=0]{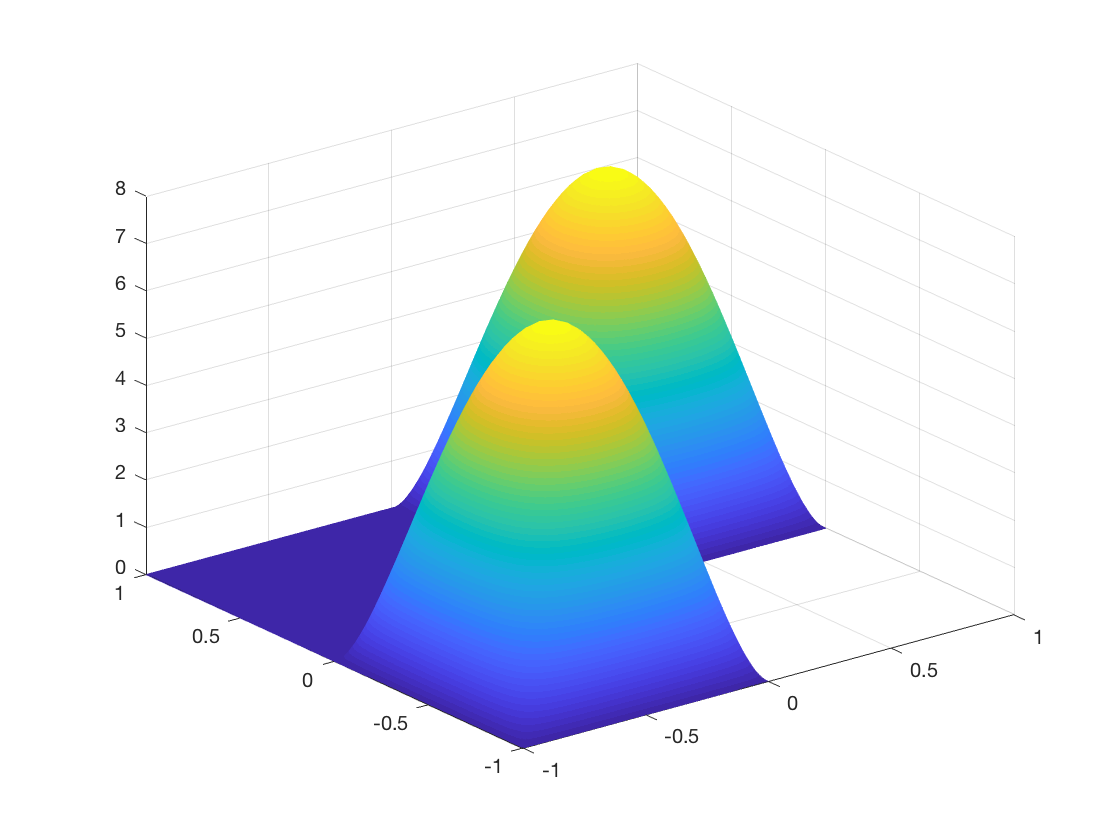}
        \end{minipage}
            \begin{minipage}[!htbp]{0.32\linewidth}
        \includegraphics[width=0.99\textwidth,angle=0]{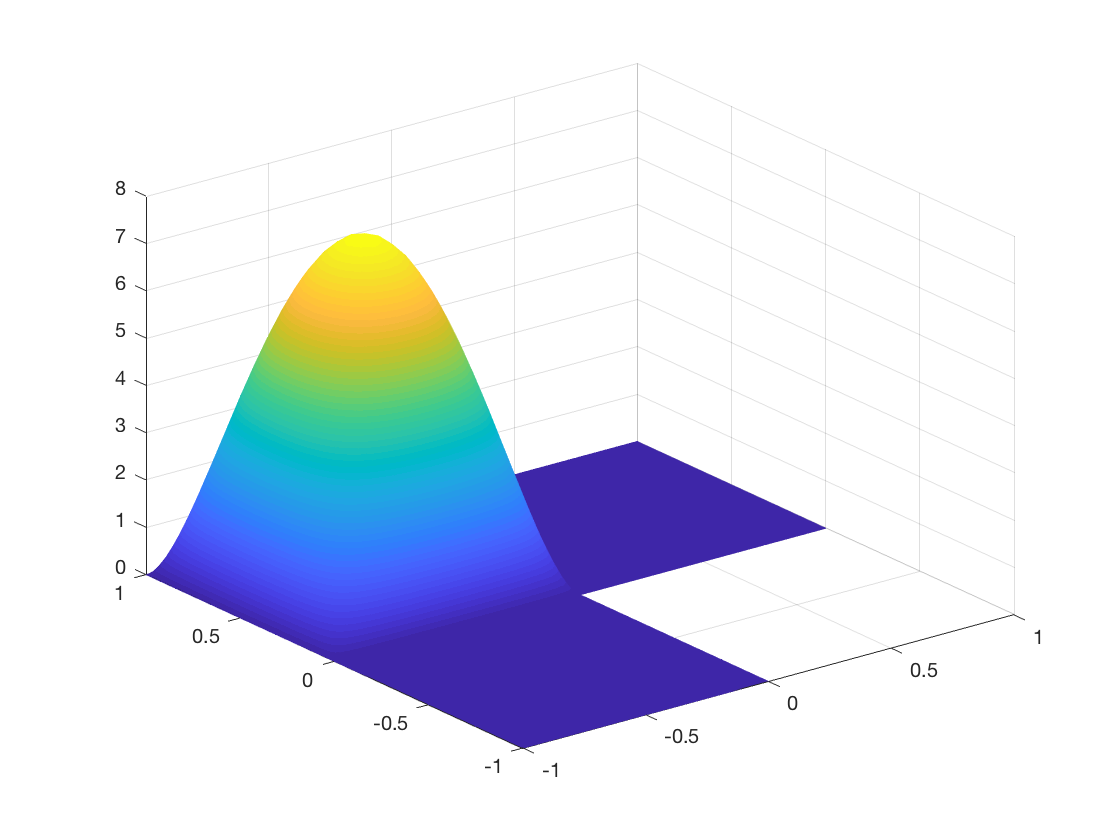}
        \end{minipage}
        \caption{ 
Different adaptively refined meshes for different parameters for the test diffusion equation: the 1st column, $\mu = (0,0)$, the 2nd column, $\mu = (-2,2)$, the 3rd column, $\mu = (2,-2)$}%
        \label{mesh_d_diff_para}
\end{figure}

In practical computations, the computational resource is not unlimited. Suppose the maximum number of DOFs that the FE solver can handle is $\cN_{\fe,\max}$. When an adaptive mesh refinement method is used, we cannot use more DOFs than $\cN_{\fe,\max}$. One naive method is that we just stop refining the mesh when $\cN_{\fe,\max}$ is reached and treat it as a fixed mesh algorithm as Algorithm 1. This approach has a problem due to that different parameters have their own optimal meshes. If the mesh with DOFs $\approx \cN_{\fe,\max}$ is  optimal for $\mu_1=(2,-2)$ and no further mesh refinements can be done. Then for  parameters  $(-2,2)$ or $(-2,-2)$, the mesh is not good. Thus, we need an adaptive mesh refinement algorithm which is balanced for the whole parameter set $S_N = \{\mu_i\}_{i=1}^N$ with a maximum number of DOFs $\cN_{\fe,\max}$.

%

%

It is important to recall that the underlying principle of adaptive mesh refinement is the so-called "equi-distribution" of the error, i.e., each element has a similar size of the error. Note that we have the following PD-sum energy error relation:
\begin{eqnarray} \label{sum_err}
\sum_{i=1}^N\cnorm{(u(\mu_i)-u_h(\mu_i),\sigma(\mu_i)-\sigma_h(\mu_i))}_{\mu_i}^2 \\ \nonumber
= \sum_{i=1}^N \Jp(u_h(\mu_i);\mu_i) - \Jd(\sigma_h(\mu_i);\mu_i) 
= \sum_{i=1}^N \eta_{h}(u_h,\sigma_h;\mu_i) ^2,
\end{eqnarray}
where $\mu_i\in S_N$. Thus, we define the PD-sum error indicator:
$$
\eta_{h,T}(u_h,\sigma_h; S_N) = \left(
\sum_{i=1}^N \eta_{h,T}(u_h,\sigma_h;\mu_i)^2
\right)^{1/2}.
$$
The PD-sum error indicator can be used to drive the mesh refinement to minimize the sum energy error  
$
(\sum_{i=1}^N\cnorm{(u(\mu_i)-u_h(\mu_i),\sigma(\mu_i)-\sigma_h(\mu_i))}_{\mu_i}^2)^{1/2}$.
With this error indicator, the mesh it generates equally distributes the PD-sum energy error, thus  it is balanced for the reduction of sum energy error for the whole $S_N$.
%

We explain the spatial and RB tolerance bi-adaptive greedy algorithm with a balanced mesh for the whole $S_N$ under the condition that number of DOFs of the FE mesh can not exceeds $\cN_{\fe,\max}$ as follows.

On each loop of the greedy algorithm, we start from a FE mesh from the previous iteration $\cT_{N-1}$, which is already good for $S_{N-1}$ and a FE tolerance $\eps_h^{N-1}$.  Now adaptively compute FE solutions $u_h(\mu_N)$ and $\sigma_h(\mu_N)$ with $\eta_{h,T}(u_h,\sigma_h;\mu_N)$ as the FE error indicator, compute the {\em best possible} solutions $\eta_{h,T}(u_h,\sigma_h;\mu)\leq \eps_h^{N-1}$ and the number of FE DOFs $\leq \mathcal{N}_{\fe,\max}$. The {\em best possible} means that if $\mathcal{N}_{\fe,\max}$ is big enough to ensure that $\eta_{h,T}(u_h,\sigma_h;\mu_N)\leq \eps_h^{N-1}$, we will use an algorithm similar to Algorithm 3 to generate a mesh $\cT_N$ (it is also possible that no refinement from $\cT_{N-1}$ is needed); if $\mathcal{N}_{\fe,\max}$ is not big enough to ensure $\eta_{h,T}(u_h,\sigma_h;\mu_N)\leq \eps_h^{N-1}$, we use as many DOFs as we can to generate a new mesh with the number of FE DOFs $\leq \cN_{\fe,\max}$.

Now we have two situations. If the $\cN_{\fe,\max}$ is enough to ensure all RB snapshots for $\mu_i\in S_N$ can be computed with the previous level FE tolerance $\eps_{h}^{N-1}$, we just need to let $\eps_{h}^{N} = \eps_{h}^{N-1}$ and $\eps_{\rb}^{N} = \eps_{\rb}^{N-1}$ and add  $u_h(\mu_N)$ and $\sigma_h(\mu_N)$ to the corresponding RB sets.

On the other hand, if the $\cN_{\fe,\max}$ is not enough to ensure all RB snapshots for $\mu_i\in S_N$ can be computed with the previous level FE tolerance $\eps_{h}^{N-1}$, we need to regenerate a new mesh which is good for all $S_N$. Starting from an initial mesh $\cT_{0}$, we adaptively compute FE solutions $u_h(\mu_i)$ and $\sigma_h(\mu_i)$, $i=1, \cdots, N$, with the sum error estimator $\eta_{h,T}(u_h,\sigma_h;S_N)$ as the FE error indicator,  and compute best possible solution with $\eps_h^{N-1}$ and $\cN_{\fe,\max}$. The final mesh is $\cT_N$. Note that for each $\mu_i$, the FE problems are solved independently, thus, the number of DOFs of each individually problem is still smaller than $\leq \cN_{\fe,\max}$. 

Define the new FE and RB tolerances to be:
$$
\eps_h^N = \max\{\eps_h^{N-1},\eta(u_h,\sigma_h;\mu_i), i=1:N\} 
\quad\mbox{and}\quad \eps_{\rb}^N: = \rferb\eps_h^N.
$$
That is, $\eps_h^N$ is the worst FE error of all $\mu_i\in S_N$ on $\cT_N$.

For the algorithm of saturation trick, the relation
$
\eta_{\rb}(u_{\rb}^N,\sigma_{\rb}^N;\mu) \leq \eta_{\rb}(u_{\rb}^{N-1},\sigma_{\rb}^{N-1};\mu)
$
is not always true since now the meshes $\cT_{N}$ and $\cT_{N-1}$ are different. But as we discussed in \cite{HSZ:14}, as long as the assumption that {\em the error estimator is smaller if the RB space is of a better quality} is true, we can use the algorithm of saturation trick. After all, we only need to find the {\em almost worst} candidate.  

We list the detailed algorithm in Algorithm \ref{greedy_combined}.

\begin{algorithm}[!ht]
\KwIn{
Training set $\Xi_{\mathtt{train}} \subset \cD$, 
initial FE accuracy tolerance $\eps_h^0 >0$, 
some $\rferb >1$,
initial RB tolerance $\eps_{\rb}^{0} = \rferb \eps_h^0$, 
maximal RB dimension $N_{\max}$, 
maximum number of FE DOFs $\mathcal{N}_{\fe,\max}$,
initial coarse mesh $\cT_0$ (whose number of DOFs $\leq \mathcal{N}_{\fe,\max}$), 
$\mu_1$, $N=1$, 
flag ${\mathtt{enough}}=1$,
flag ${\mathtt{refined}}=0$.
}
\KwOut{
$N$, $\eps_h^N$, $\eps_{\rb}^N$, $V^{\rb}_N$, $\Sigma^{\rb}_N$, $\cT_N$.
}
\BlankLine
\begin{algorithmic} [1]
\STATE Starting from mesh $\cT_{N-1}$, adaptively compute FE solutions $u_h(\mu_N)$ and $\sigma_h(\mu_N)$
with $\eta_{h,T}(u_h,\sigma_h;\mu_N)$ as the FE error indicator, compute best possible solutions 
$\eta_{h,T}(u_h,\sigma_h;\mu)\leq \eps_h^{N-1}$ and the number of FE DOFs $\leq \mathcal{N}_{\fe,\max}$ .
The final mesh is $\cT_N$. Mark ${\mathtt{refined}}=1$ if the mesh is refined, otherwise  ${\mathtt{refined}}=0$;

If  $\mathcal{N}_{\fe,\max}$ is not enough to make $\eta_{h,T}(u_h,\sigma_h;\mu)\leq \eps_h^{N-1}$,
${\mathtt{enough}}=0$.

 \IF  {${\mathtt{enough}}=1$}	
 \STATE $\eps_h^{N} = \eps_h^{N-1}$, $\eps_{\rb}^N =  \rferb \eps_h^N$.
 \STATE If {${\mathtt{refined}}=1$}, 
 recompute $u_h(\mu_i)$ and $\sigma_h(\mu_i)$, $i=1, \cdots, N-1$
 on the current mesh
$\cT_N$. 
\STATE Update $V^N_{\rb} $ and $\Sigma^N_{\rb}$.

%

 	\ELSE 

\STATE Starting from mesh $\cT_{0}$, adaptively compute FE solutions $u_h(\mu_i)$ and $\sigma_h(\mu_i)$, $i=1, \cdots, N$,
with $\eta_{h,T}(u_h,\sigma_h;S_N)$ as the FE error indicator, 
compute best possible solution with $\eps_h^{N-1}$ and $\cN_{\fe,\max}$.
The final mesh is $\cT_N$. {\tt{refined}}$=0$.

\STATE Update $V^N_{\rb} $ and $\Sigma^N_{\rb}$.

\STATE Update  $\eps_h^N = \max\{\eps_h^{N-1},\eta(u_h,\sigma_h;\mu_i), i=1:N\}$ 
and $\eps_{\rb}^N = \rferb \eps_h^N$.
\ENDIF

\STATE 
Choose 
$\mu_{N+1} = \mbox{argmax}_{\mu\in \Xi_{\mathtt{train}}} \eta_{\rb}(u_{\rb}^N,\sigma_{\rb}^N;\mu).
$

\STATE If $\eta_{\rb}(u_{\rb}^N,\sigma_{\rb}^N;\mu_{N+1}) > \eps_{\rb}^N$ and $N<N_{\max}$, then set $N=N+1$, {\bf goto} 1. Otherwise, {\bf terminate}. 

\end{algorithmic}\caption{A primal-dual exact energy error certified RB greedy algorithm with bi-adaptivity and a balanced mesh for the RB parameter set $S_N$ (a bi-adaptivity balanced greedy algorithm)}
\label{greedy_combined}
\end{algorithm}

\begin{rem}
Instead of working on a big super-mesh for the whole $S_N$, the other possibility is to construct a common mesh (as in \cite{URL:16}) for any two parameters with the PD-sum error estimator for two.
\end{rem}
\subsection{Numerical experiments for Algorithm 3} In this subsection, we test PD-RBM for the test diffusion  problem with Algorithm 3.

\noindent ({\bf Setting for the numerical experiment for Algorithm 3})
{\em 
Choose the initial $\eps_h^0 = 0.05$ and $\cN_{\fe,\max} = 2\times 10^4$. The  rest is the same as the setting for the numerical experiment for Algorithm 2.
}

We first test the case $\mathtt {r_{rb,fe}}=2$ ($\eps_{\rb}^0= 0.1$). The numerical results can be found in Table \ref{tab-d2_com} and Fig. \ref{d_2d_com}. We need $4$ RB snapshots to balance the RB dimension, $\eps_{\rb}$, and $\cN_{\fe,\max}$.

From the 10th row of table, we see that the $\cN_{\fe,\max}$ was not enough in $3$ occasions. For a smaller RB dimension $N$, $\eps_{\fe}^N$ and $\eps_{\rb}^N$ are smaller, so we need more DOFs for each $\mu_i$ to satisfy the error requirement, but since $N$ is smaller, the situation that different $\mu_i$ needs very different meshes is less severe. For a bigger RB dimension $N$,  the situation is different, more $\mu_i$ in $S_N$ but a bigger FE tolerance $\eps_h$. Thus it is a dynamic balance of the dimension $N$ and the FE and RB tolerances. 
We show the four refined meshes corresponding to the first adaptive mesh and the other $3$ non-enough cases on  Fig. \ref{d_2d_com_mesh}. It is easy to see that the last two meshes are more balanced for all the $\mu_i\in S_N$, which is quite similar to the combination of all three meshes depicted in Fig. \ref{mesh_d_diff_para}.
%

\begin{figure}[!ht]
    \centering
   \begin{minipage}[!hbp]{0.33\linewidth}
        \includegraphics[width=0.99\textwidth,angle=0]{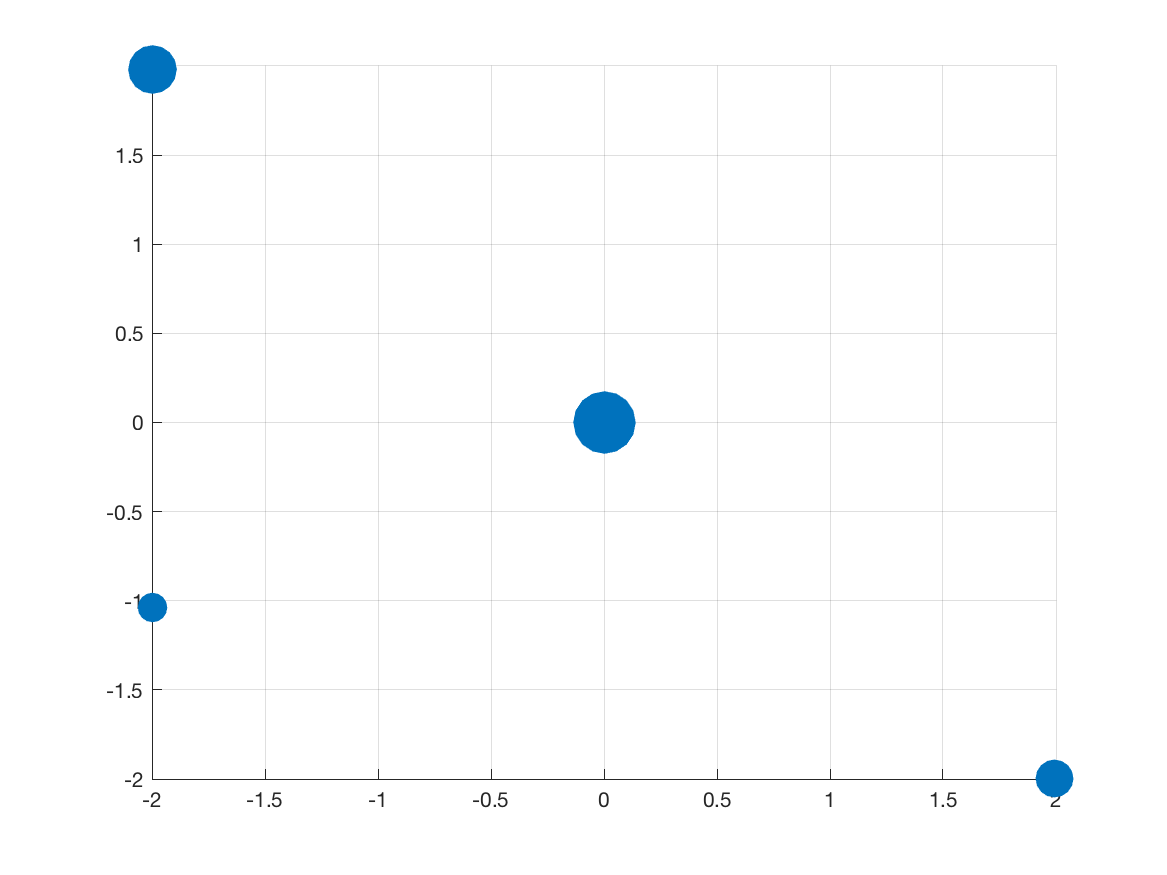}
        \end{minipage}%
        \quad
    \begin{minipage}[!htbp]{0.33\linewidth}
        \includegraphics[width=0.99\textwidth,angle=0]{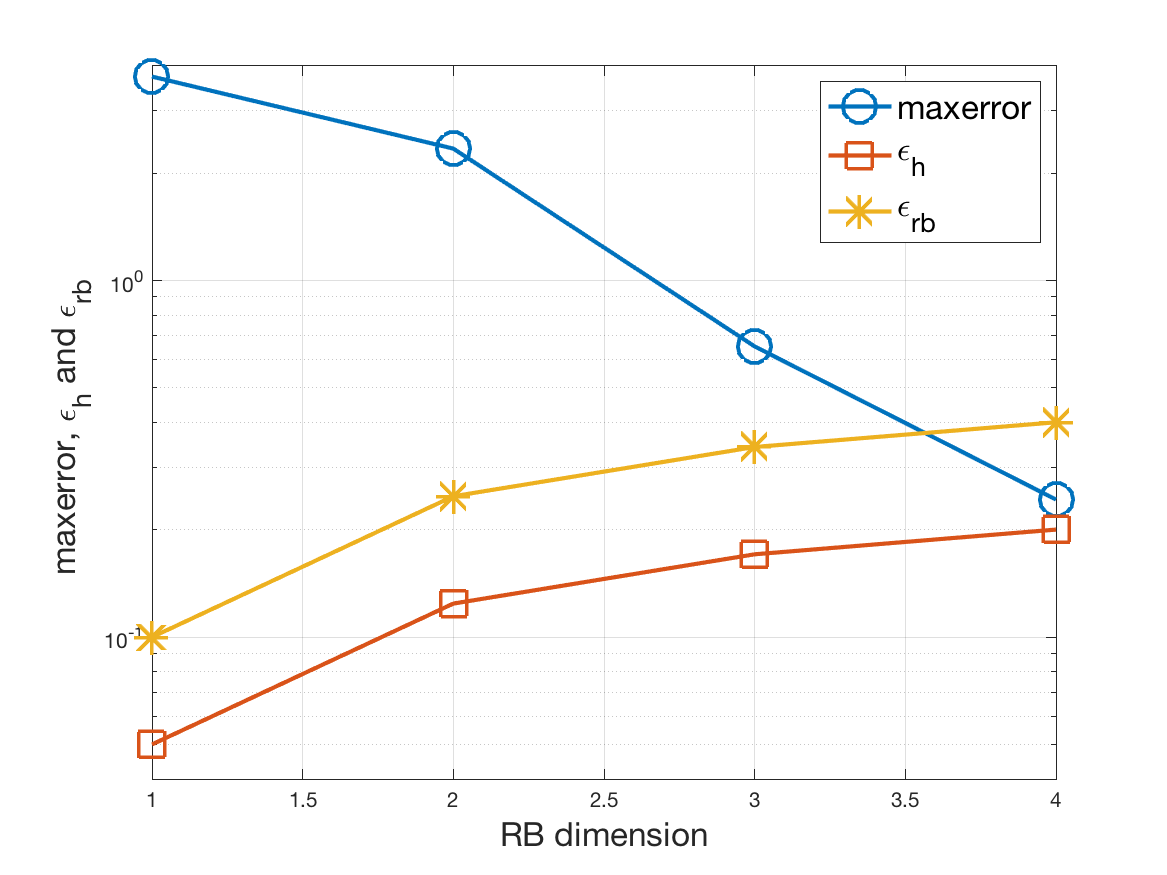}
        \end{minipage}
        \caption{Selection of RB points $S_N$ (left) and convergence history of RB maxerror (right) for test diffusion problem with PD-RBM and Algorithm 3 ($\mathtt {r_{rb,fe}}=2$)}%
        \label{d_2d_com}
\end{figure}

\begin{figure}[!ht]
    \centering
   \begin{minipage}[!hbp]{0.33\linewidth}
        \includegraphics[width=0.99\textwidth,angle=0]{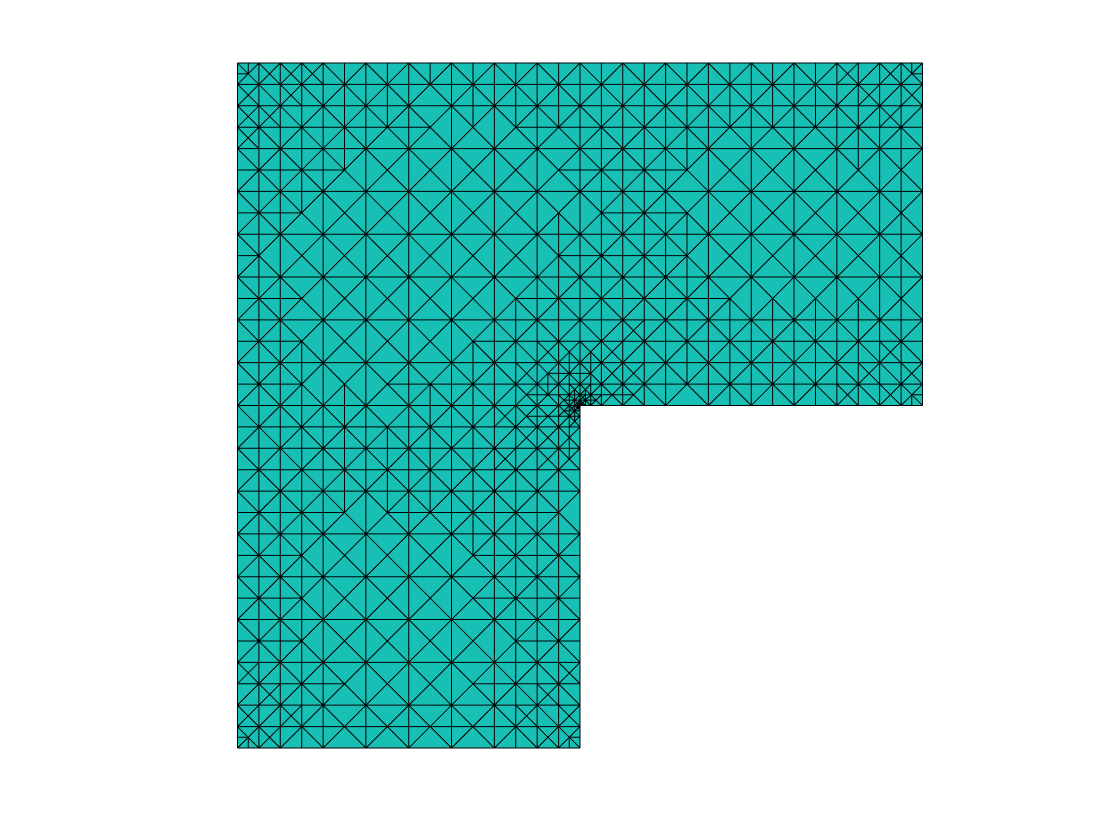}
        \end{minipage}%
        \quad
    \begin{minipage}[!htbp]{0.33\linewidth}
        \includegraphics[width=0.99\textwidth,angle=0]{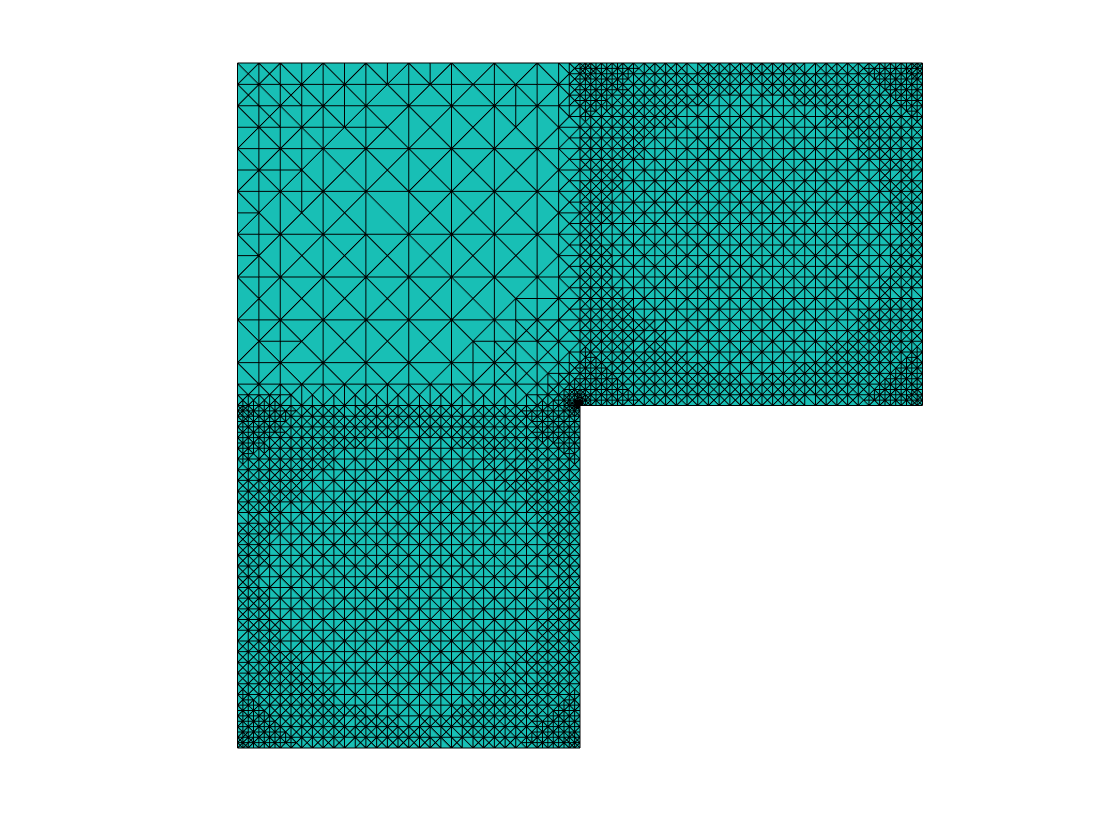}
        \end{minipage}
        \\
        \centering
   \begin{minipage}[!hbp]{0.33\linewidth}
        \includegraphics[width=0.99\textwidth,angle=0]{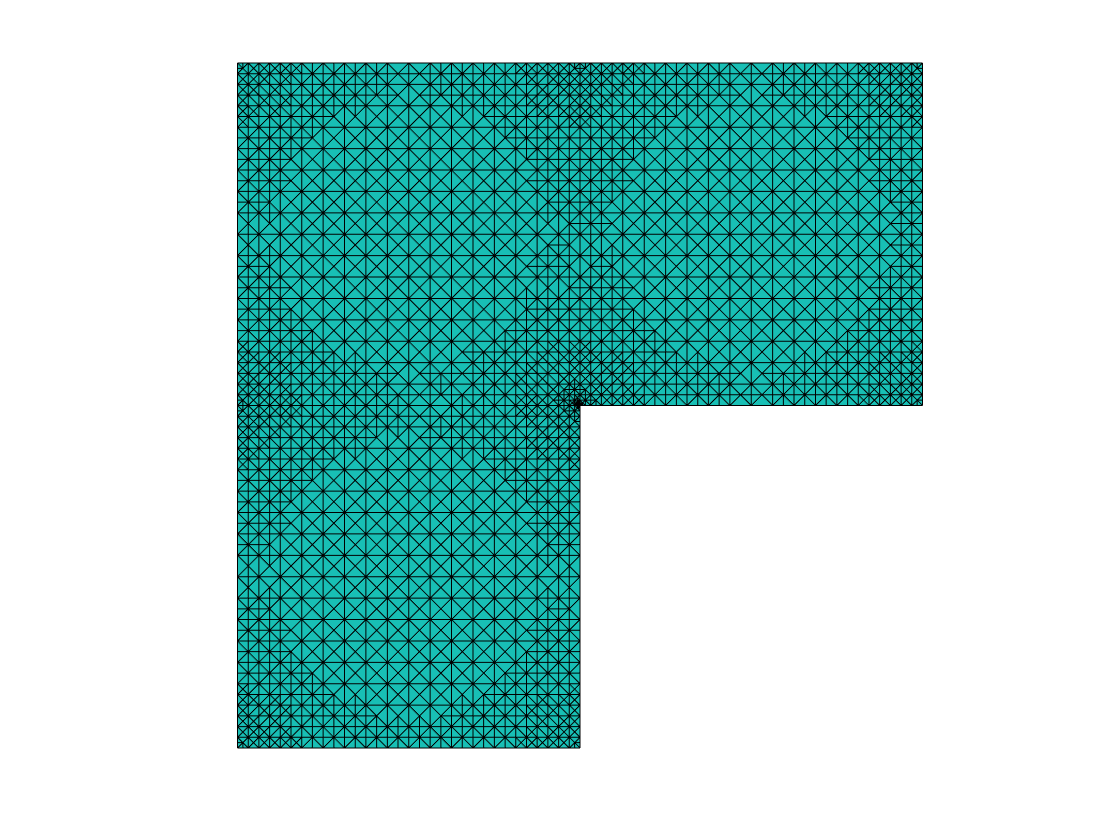}
        \end{minipage}%
        \quad
    \begin{minipage}[!htbp]{0.33\linewidth}
        \includegraphics[width=0.99\textwidth,angle=0]{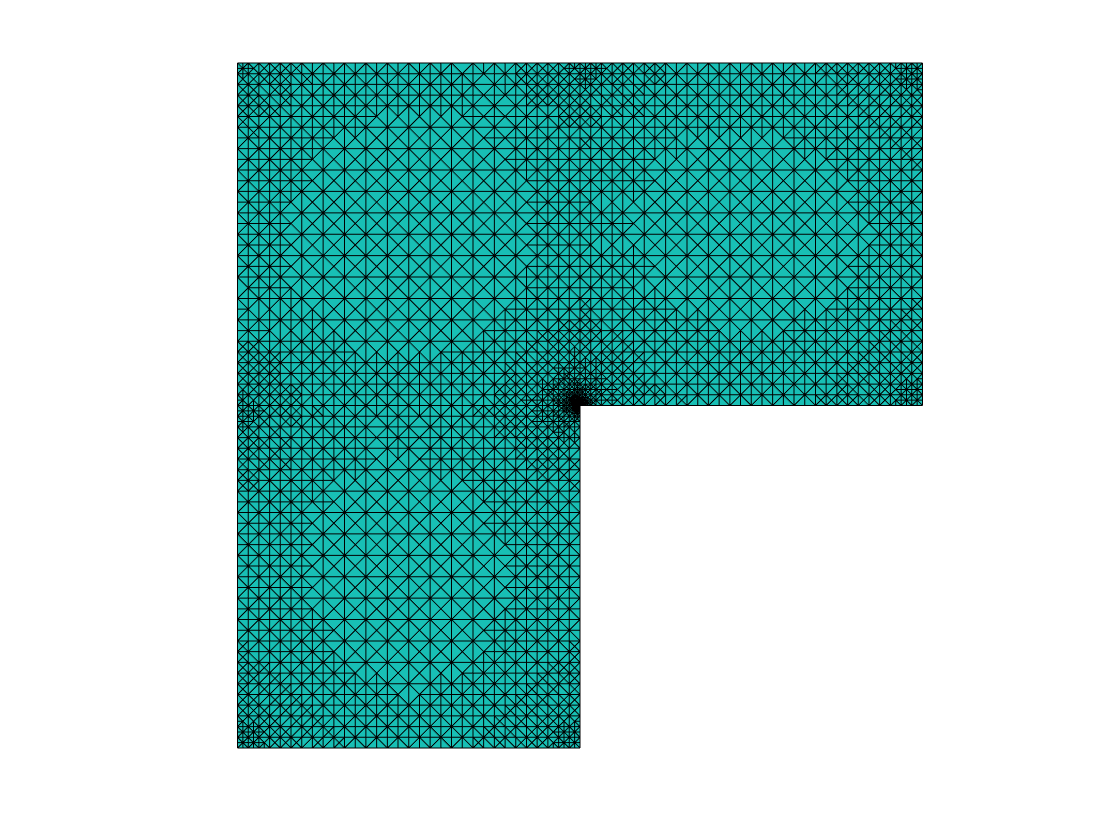}
        \end{minipage}
        
        \caption{Four refined meshes generated by PD-RBM and Algorithm 3 for the test diffusion equation ($\mathtt {r_{rb,fe}}=2$)}%
        \label{d_2d_com_mesh}
\end{figure}

\begin{table}[!ht]
\caption{Convergence results for PD-RBM with Algorithm 3 for test diffusion equation ($\mathtt {r_{rb,fe}}=2$)}
\begin{center}
\begin{tabular}{|c|r|r|r|r|r|r|r|r|r|}
 \hline
       RB Dim  & 1 & 2 & 3& 4\\
 \hline
$\mu_{[1]}$ &      
   0.0000 &
     -1.9996&
    1.9936&
   -1.9973\\
    \hline
    $\mu_{[2]}$ &      
   0.0000 &
    1.9808&
   -1.9999&
   -1.0361\\
    \hline
    $\eps_{h}$ &      
  0.0500&
    0.1242&
    0.1707&
    0.2003\\
 \hline
     $\eps_{\rb}$ &      
     0.1000&
    0.2484&
    0.3415&
    0.4006\\
 \hline
maxerror & 
3.7357&
    2.3461&
    0.6537&
    0.2432\\
 \hline
 $\cN_{\fe,p}$ &  
   814&
        3791&
        2675&
        3694\\
 \hline
  $\cN_{\fe,d}$ &
  3793&
       18398&
       12844&
       17821\\
 \hline
  skipped &  
  		0&  90624&
       56967&
       67516\\
 \hline
   enough &
     1&
     0&
     0&
     0
     \\
     \hline
test error &   
 &&&       0.2424\\
        \hline
\end{tabular}
\end{center}
\label{tab-d2_com}
\end{table}%

The numerical test results can be found in  Fig. \ref{d_2d_com11} for $\mathtt {r_{rb,fe}}=1.1$. We need $5$ RB snapshots to balance the RB dimension, $\eps_{\rb}$, and $\cN_{\fe,\max}$.

\begin{figure}[!ht]
    \centering
   \begin{minipage}[!hbp]{0.33\linewidth}
        \includegraphics[width=0.99\textwidth,angle=0]{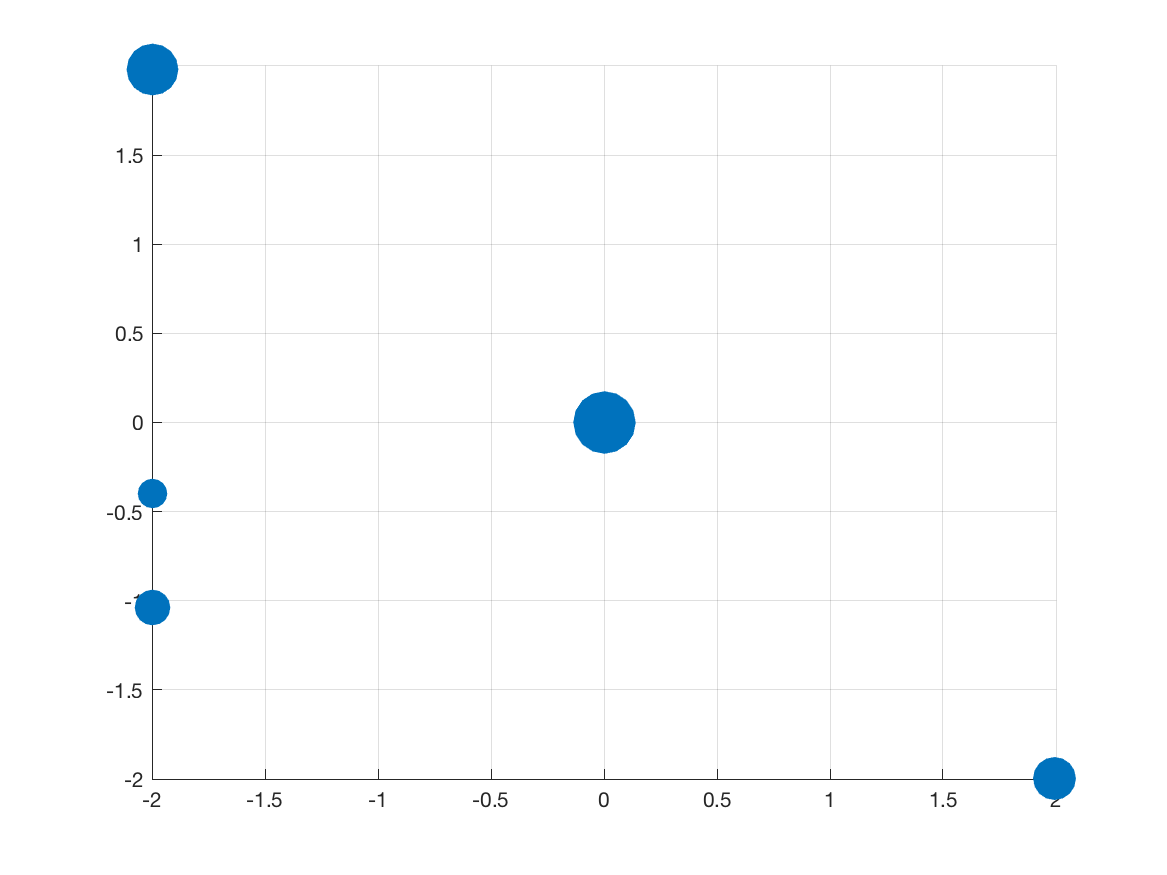}
        \end{minipage}%
        \quad
    \begin{minipage}[!htbp]{0.33\linewidth}
        \includegraphics[width=0.99\textwidth,angle=0]{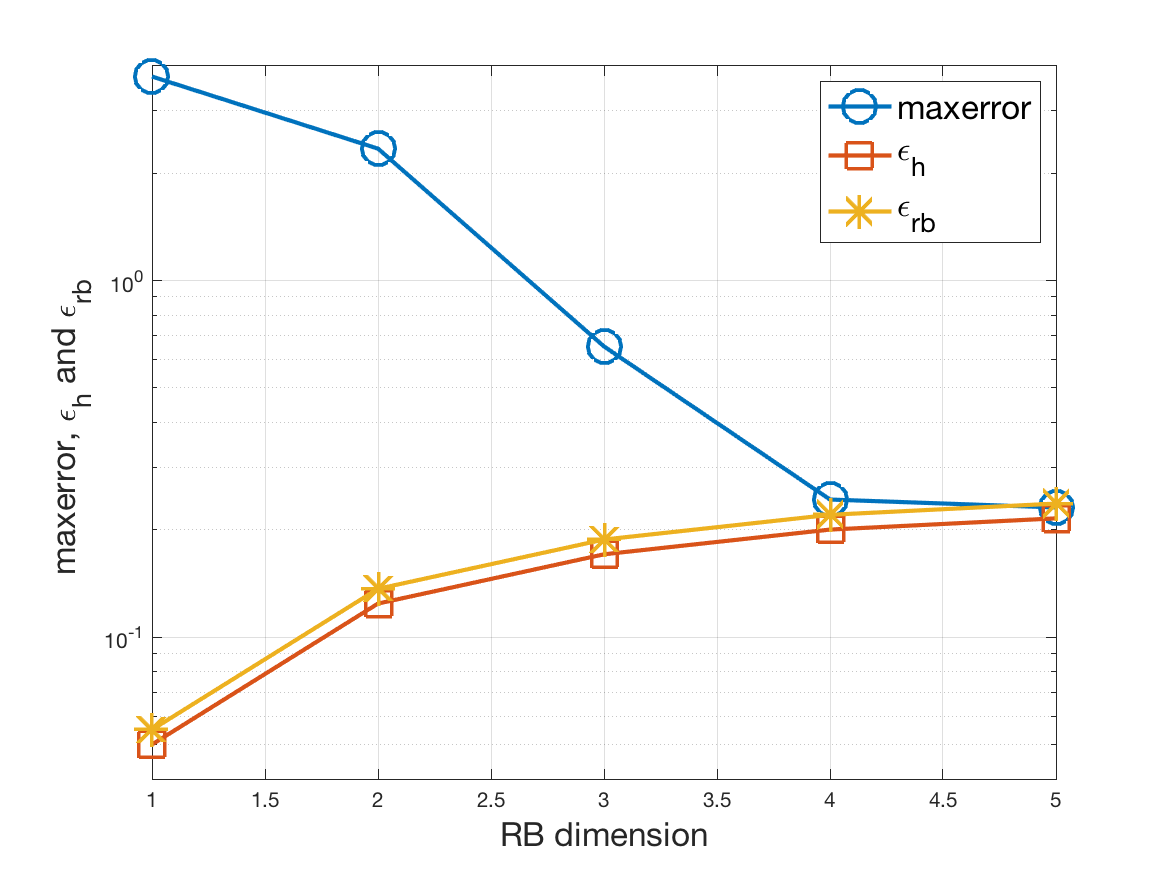}
        \end{minipage}
        \caption{Selection of RB points $S_N$ (left) and convergence history of RB maxerror (right) for test diffusion problem with PD-RBM and Algorithm 3 ($\mathtt {r_{rb,fe}}=1.1$)}%
        \label{d_2d_com11}
\end{figure}

\begin{rem}[Computational Cost]
Algorithm 3 is only used in the extreme case that the computational resource is not enough and Algorithm 2 is not applicable. If the computational resource is enough, the else part of Algorithm 3 will not be evoked and it is simply the Algorithm 2.

Based on the same observation that an adaptive mesh is much cheaper than  methods on a non-optimal mesh and the number of DOFs is under control, even though we need to re-compute the solutions on the new mesh in each step, this is still worthwhile. 
\end{rem}

\subsection{Remarks on numerical tests}
We can also use the relative energy RB error in the algorithms. 

Note that although we use quite different meshes and the algorithm of saturation trick in the greedy algorithms, the selection of the set $S_N$ and their orders are identical for all three algorithms. The algorithms of saturation trick also saves a large percent of offline loop in $\mu \in \Xi_{\mathtt{train}}$ for all three algorithms. 
The 10000 online random tests for all algorithms also show that the RB tolerances are well selected (in the sense the next RB selection will require more resources).
%

The algorithms defined here can be applied to other true-solution/true-residual certificated RBMs, for example, RBM based on least-squares principles \cite{Olson:20} with the least-squares energy functional/norm/estimator as the new energy functional/norm/estimator. For the standard RBM, it is also possible to apply the similar idea with different FE error estimators.


\section{Concluding Comments and Future Plans}
In this paper, with the parametric symmetric coercive elliptic equation as an example of a class of primal-dual problems with the strong duality, we develop primal-dual reduced basis methods with robust true error certification and adaptive balanced greedy algorithms.
	
	

Although we only discuss PD-RBM for the linear problem in the paper, we plan to apply the PD-RBM to other more challenging problems in the primal and dual variational framework, such as variational inequalities and nonlinear problems. Also, we plan to relax the requirements of primal-dual problems, for example, to use discontinuous approximations (see \cite{CHZ:20}), to relax the symmetric requirement (see \cite{Yano:17}), and others. 

As mentioned in Section 6.5, for many other problems not fitted in the framework, the reasonable choice is using the artificial minimization principle: the least-squares principle \cite{BG:09,Yano:16,Olson:20}. It is an ongoing work apply the greedy algorithms and ideas developed in this paper to construct the true residual certified least-squares RBM for a wide range of linear and nonlinear problems where the standard RBMs have difficulties, for example, transport equations \cite{LZ:18,LZ:19}.

In this paper, we focus on the robust energy certification only, it is also very important to develop robust certification for the quantity of interest with balanced mesh adaptivities for both the PD-RBM and the related least-squares RBM. We plan to work on this direction following two approaches, one is the approach based on the adjoint problems \cite{Yano:18} and the other one is  adding the output functional into the minimization functional \cite{Olson:14}.

\subsection*{Acknowledgments} The author sincerely thanks the anonymous referees for the helpful comments and valuable suggestions, which considerably improved the exposition of this work. 

\bibliographystyle{siamplain}
\bibliography{../bib/szhang}

\begin{thebibliography}{10}

\bibitem{AO:00}
{\sc M.~Ainsworth and J.~T. Oden}, {\em A Posteriori Error Estimation in Finite
  Element Analysis A Posteriori Error Estimation in Finite Element Analysis},
  Wiley, 2000.

\bibitem{AV:19}
{\sc M.~Ainsworth and T.~Vejchodsk{\'y}}, {\em A simple approach to reliable
  and robust a posteriori error estimation for singularly perturbed problems},
  Comput. Methods Appl. Mech. Engrg., 353 (2019), pp.~373--390.

\bibitem{ASU:17}
{\sc M.~Ali, K.~Steih, and K.~Urban}, {\em Reduced basis methods with adaptive
  snapshot computations}, Adv. Comput. Math., 43 (2017), pp.~257--294.

\bibitem{Arnold:90}
{\sc D.~N. Arnold}, {\em Mixed finite element methods for elliptic problems},
  Comput. Methods Appl. Mech. Engrg., 82 (1990), pp.~281--300.

\bibitem{AW:02}
{\sc D.~N. Arnold and R.~Winther}, {\em Mixed finite elements for elasticity},
  Numer. Math., 92 (2002), pp.~401--419.

\bibitem{ABM:14}
{\sc H.~Attouch, G.~Buttazzo, and G.~Michaille}, {\em Variational Analysis in
  Sobolev and BV spaces: Applications to PDEs and Optimization}, SIAM, 2nd~ed.,
  2014.

\bibitem{aubin:98}
{\sc J.-P. Aubin}, {\em Optima and Equilibria: An Introduction to Nonlinear
  Analysis}, Springer, 2nd~ed., 1998.

\bibitem{Bar:15_mathcomp}
{\sc S.~Bartels}, {\em Error control and adaptivity for a variational model
  problem defined on functions of bounded variation}, Math. Comp., 84 (2015),
  pp.~1217--1240.

\bibitem{Bar:15}
{\sc S.~Bartels}, {\em Numerical Methods for Nonlinear Partial Differential
  Equations}, vol.~47 of Springer Series in Computational Mathematics,
  Springer, 2015.

\bibitem{BCD:04}
{\sc S.~Bartels, C.~Carstensen, and G.~Dolzmann}, {\em Inhomogeneous dirichlet
  conditions in a priori and a posteriori finite element error analysis},
  Numer. Math., 99 (2004), pp.~1--24.

\bibitem{BM:19}
{\sc S.~Bartels and M.~Milicevic}, {\em Primal-dual gap estimators for a
  posteriori error analysis of nonsmooth minimization problems},
  arXiv:1902.03967 [math.NA],  (2019).

\bibitem{BS:17}
{\sc S.~Bartels and P.~Sch{\"o}n}, {\em Adaptive approximation of the
  monge--kantorovich problem via primal-dual gap estimates}, ESAIM: M2AN, 51
  (2017), pp.~2237--2261.

\bibitem{BCOW:17}
{\sc P.~Benner, A.~Cohen, M.~Ohlberger, and K.~Willcox}, eds., {\em Model
  Reduction and Approximation: Theory and Algorithms}, SIAM, 2017.

\bibitem{BeVe:00}
{\sc C.~Bernardi and R.~Verf\"urth}, {\em Adaptive finite element methods for
  elliptic equations with non-smooth coefficients}, Numer. Math., 85 (2000),
  pp.~579--608.

\bibitem{BG:09}
{\sc P.~B. Bochev and M.~D. Gunzburger}, {\em {Least-Squares Finite Element
  Methods}}, Applied Mathematical Sciences, 166, Springer, 2009.

\bibitem{BBF:13}
{\sc D.~Boffi, F.~Brezzi, and M.~Fortin}, {\em {Mixed Finite Element Methods
  and Applications}}, no.~44 in Springer Series in Computational Mathematics,
  Springer, 2013.

\bibitem{Braess:07}
{\sc D.~Braess}, {\em Finite Elements: Theory, Fast Solvers, and Applications
  in Solid Mechanics}, Cambridge University Press, 2007.

\bibitem{BHS:08}
{\sc D.~Braess, R.~Hoppe, and J.~Sch{\"o}berl}, {\em A posteriori estimators
  for obstacle problems by the hypercircle method}, Comp. Visual. Sci., 11
  (2008), pp.~351--362.

\bibitem{BPS:20}
{\sc D.~Braess, A.~Pechstein, and J.~Sch{\"o}berl}, {\em An equilibration-based
  a posteriori error bound for the biharmonic equation and two finite element
  methods}, IMA Journal of Numerical Analysis, 40 (2020).

\bibitem{BS:08}
{\sc D.~Braess and J.~Sch\"oberl}, {\em Equilibrated residual error estimator
  for edge elements}, Math. Comp., 77 (2008), pp.~651--672.

\bibitem{BrSc:08}
{\sc S.~Brenner and R.~Scott}, {\em The Mathematical Theory of Finite Element
  Methods}, vol.~15 of Texts in Applied Mathematics, Springer, third~ed., 2008.

\bibitem{CCZ:20}
{\sc D.~Cai, Z.~Cai, and S.~Zhang}, {\em Robust equilibrated a posteriori error
  estimator for higher order finite element approximations to diffusion
  problems}, Numer. Math., 144 (2020), pp.~1--21.

\bibitem{CCZ:20mixed}
{\sc D.~Cai, Z.~Cai, and S.~Zhang}, {\em Robust equilibrated error estimator
  for diffusion problems: Mixed finite elements in two dimensions}, Journal of
  Scientific Computing, 83 (2020).

\bibitem{CaHeZh:17}
{\sc Z.~Cai, C.~He, and S.~Zhang}, {\em Discontinuous finite element methods
  for interface problems: Robust a priori and a posteriori error estimates},
  SIAM J. Numer. Anal., 55 (2017), pp.~400--418.

\bibitem{CHZ:20}
{\sc Z.~Cai, C.~He, and S.~Zhang}, {\em Generalized {Prager-Synge} inequality
  and equilibrated error estimators for discontinuous elements},
  arXiv:2001.09102 [math.NA],  (2020).

\bibitem{CaZh:09}
{\sc Z.~Cai and S.~Zhang}, {\em Recovery-based error estimator for interface
  problems: Conforming linear elements}, SIAM J. Numer. Anal., 47 (2009),
  pp.~2132--2156.

\bibitem{CZ:10a}
{\sc Z.~Cai and S.~Zhang}, {\em Recovery-based error estimators for interface
  problems: Mixed and nonconforming finite elements}, SIAM J. Numer. Anal., 48
  (2010), pp.~30--52.

\bibitem{CZ:12}
{\sc Z.~Cai and S.~Zhang}, {\em Robust equilibrated residual error estimator
  for diffusion problems: conforming elements}, SIAM J. Numer. Anal., 50
  (2012), pp.~151--170.

\bibitem{Olson:14}
{\sc J.~Chaudhry, E.~Cyr, K.~Liu, T.~Manteuffel, L.~Olson, and L.~Tang}, {\em
  Enhancing least-squares finite element methods through a
  quantity-of-interest}, SIAM J. Numer. Anal., 52 (2014), pp.~3085--3105.

\bibitem{Olson:20}
{\sc J.~H. Chaudhry, L.~N. Olson, and P.~Sentz}, {\em A least-squares finite
  element reduced basis method}, arXiv:2003.04555 [math.NA],  (2020).

\bibitem{Ciarlet:88}
{\sc P.~G. Ciarlet}, {\em Mathematical Elasticity, Vol. I : Three-Dimensional
  Elasticity}, North-Holland, 1988.

\bibitem{Ciarlet:91}
{\sc P.~G. Ciarlet}, {\em Basic error estimates for elliptic problems}, in
  Handbook of Numerical Analysis, P.~Ciarlet and J.-L. Lions, eds., vol.~II,
  North-Holland, 1991, ch.~Finite Element Methods (Part I), pp.~17--351.

\bibitem{Ciarlet:78}
{\sc P.~G. Ciarlet}, {\em The Finite Element Method for Elliptic Problems},
  vol.~40 of Classics in Applied Mathematics, SIAM, 2002.

\bibitem{Ciarlet:13}
{\sc P.~G. Ciarlet}, {\em {Linear and Nonlinear Functional Analysis with
  Applications}}, SIAM, 2013.

\bibitem{CGK:11}
{\sc P.~G. Ciarlet, G.~Geymonat, and F.~Krasucki}, {\em Legendre - fenchel
  duality in elasticity}, C. R. Acad. Sci. Paris, Ser. I, 349 (2011),
  pp.~597--602.

\bibitem{CGK:12}
{\sc P.~G. Ciarlet, G.~Geymonat, and F.~Krasucki}, {\em A new duality approach
  to elasticity}, Math. Models Methods Appl. Sci., 22 (2012), p.~1150003.

\bibitem{ET:76}
{\sc I.~Ekeland and R.~T\'emam}, {\em Convex Analysis and Variational
  Problems}, North-Holland, 1976.

\bibitem{EV:15}
{\sc A.~Ern and M.~Vohral{\'\i}k}, {\em Polynomial-degree-robust a posteriori
  estimates in a unified setting for conforming, nonconforming, discontinuous
  galerkin, and mixed discretizations,}, SIAM J. Numer. Anal., 53 (2015),
  pp.~1058--1081.

\bibitem{Glow:84}
{\sc R.~Glowinski}, {\em Numerical Methods for Nonlinear Variational Problems},
  Springer, 1984.

\bibitem{GMNP:07}
{\sc M.~Grepl, Y.~Maday, N.C.Nguyen, and A.T.Patera}, {\em
  Efficientreduced-basistreatmentofnonaffine and nonlinear partial differential
  equations}, ESAIM Math. Model. Numer. Anal., 41 (2007), pp.~575--605.

\bibitem{Ha:17}
{\sc B.~Haasdonk}, {\em Reduced basis methods for parametrized pdes--a tutorial
  introduction for stationary and instationary problems}, in Model Reduction
  and Approximation: Theory and Algorithms,, P.~Benner, A.~Cohen, M.~Ohlberger,
  and K.~Willcox, eds., SIAM, 2017.

\bibitem{Han:05}
{\sc W.~Han}, {\em {A Posteriori Error Analysis via Duality Theory, with
  Applications in Modeling and Numerical Approximations}}, Springer, 2005.

\bibitem{HRS:16}
{\sc J.~S. Hesthaven, G.~Rozza, and B.~Stamm}, {\em {Certified Reduced Basis
  Methods for Parametrized Partial Differential Equations}}, SpringerBriefs in
  Mathematics, Springer, 2016.

\bibitem{HSZ:14}
{\sc J.~S. Hesthaven, B.~Stamm, and S.~Zhang}, {\em Efficient greedy algorithms
  for high-dimensional parameter spaces with applications to empirical
  interpolation and reduced basis methods}, ESAIM: Mathematical Modeling and
  Numerical Analysis, 48 (2014), pp.~259--283.

\bibitem{HHNL:88}
{\sc I.~Hlavacek, J.~Haslinger, J.~Necas, and J.~Lovsek}, {\em Solution of
  Variational Inequalities in Mechanics}, Springer, 1988.

\bibitem{HuRoSePa:07}
{\sc D.~B.~P. Huynh, G.~Rozza, S.~Sen, and A.~T. Patera}, {\em A successive
  constraint linear optimization method for lower bounds of parametric
  coercivity and inf-sup stability constants}, C. R. Acad. Sci. Paris, Ser. I,,
  345 (2007), pp.~473--478.

\bibitem{LZ:19}
{\sc Q.~Liu and S.~Zhang}, {\em Adaptive flux-only least-squares finite element
  methods for linear transport equations}, Journal of Scientific Computing, 84
  (2020).

\bibitem{LZ:18}
{\sc Q.~Liu and S.~Zhang}, {\em Adaptive least-squares finite element methods
  for linear transport equations based on an {H(div)} flux reformulation},
  Comput. Methods Appl. Mech. Engrg., 366 (2020), p.~113041.

\bibitem{Maday:06}
{\sc Y.~Maday}, {\em Reduced basis method for the rapid and reliable solution
  of partial differential equations}, in Proceedings of the International
  Congressof Mathematicians, Madrid, Spain, 2006, European Mathematical
  Society, 2006.

\bibitem{NR:04}
{\sc P.~Neittaanm{\"a}ki and S.~Repin}, {\em Reliable Methods for Computer
  Simulation: Error Control and A Posteriori Estimates}, Elsevier, 2004.

\bibitem{NSV:09}
{\sc R.~H. Nochetto, K.~G. Siebert, and A.~Veeser}, {\em Theory of adaptive
  finite element methods: An introduction}, in Multiscale, Nonlinear and
  Adaptive Approximation, R.~A. DeVore and A.~Kunoth, eds., Springer, 2009.

\bibitem{OR:83}
{\sc J.~T. Oden and J.~Reddy}, {\em Variational methods in theoretical
  mechanics}, Universitext, Springer, 1983.

\bibitem{OR:16}
{\sc M.~Ohlberger and S.~Rave}, {\em Reduced basis methods: Success,
  limitations and future challenges}, in Proceedings of the Conference
  Algoritmy, 2016, pp.~1--12.

\bibitem{OS:15}
{\sc M.~Ohlberger and F.~Schindler}, {\em Error control for the localized
  reduced basis multiscale method with adaptive on-line enrichment}, SIAM J.
  Sci. Comput., 37 (2015), pp.~A2865--A2895.

\bibitem{Pet:02}
{\sc M.~Petzoldt}, {\em A posteriori error estimators for elliptic equations
  with discontinuous coefficients}, Adv. Comput. Math., 16 (2002), pp.~47--75.

\bibitem{PS:47}
{\sc W.~Prager and J.~L. Synge}, {\em Approximations in elasticity based on the
  concept of function space}, Quart. Appl. Math.,  (1947), pp.~286--292.

\bibitem{QMN:16}
{\sc A.~Quarteroni, A.~Manzoni, and F.~Negri}, {\em {Reduced Basis Methods for
  Partial Differential Equations: An Introduction}}, Springer, 2016.

\bibitem{QRM:11}
{\sc A.~Quarteroni, G.~Rozza, and A.~Manzoni}, {\em Certified reduced basis
  approximation for parametrized partial differential equations and
  applications}, Journal of Mathematics in Industry,  (2011).

\bibitem{Reddy:17}
{\sc J.~N. Reddy}, {\em Energy Principles and Variational Methods in Applied
  Mechanics}, Wiley, 3rd~ed., 2017.

\bibitem{Repin:00_nonlinear}
{\sc S.~Repin}, {\em A posteriori error estimation for nonlinear variational
  problems by duality theory}, Journal of Mathematical Sciences, 99 (2000),
  pp.~927--935.

\bibitem{Repin:00}
{\sc S.~Repin}, {\em A posteriori error estimation for variational problems
  with uniformly convex functionals}, Mathematics of Computation, 69 (2000),
  pp.~481--500.

\bibitem{Rock:70}
{\sc R.~T. Rockafellar}, {\em Convex Analysis}, Princeton University Press,
  1970.

\bibitem{RW:98}
{\sc R.~T. Rockafellar and R.~J.-B. Wets}, {\em Variational Analysis}, vol.~317
  of Grundlehren der mathematischen Wissenschaften, Springer, 1998.

\bibitem{RHP:08}
{\sc G.~Rozza, D.~B.~P. Huynh, and A.~T. Patera}, {\em Reduced basis
  approximation and a posteriori error estimation for affinely parametrized
  elliptic coercive partial differential equations application to transport and
  continuum mechanics}, Archives of Computational Methods in Engineering, 15
  (2008).

\bibitem{Synge:57}
{\sc J.~L. Synge}, {\em The Hypercircle in Mathematical Physics: A Method for
  the Approximate Solution of Boundary Value Problems}, Cambridge University
  Press, 2012 (originally published in 1957).

\bibitem{TV:15}
{\sc L.~Tobiska and R.~Verf\"urth}, {\em Robust a posteriori error estimates
  for stabilized finite element methods}, IMA Journal of Numerical Analysis, 35
  (2015), pp.~1652--1671.

\bibitem{URL:16}
{\sc S.~Ullmann, M.~Rotkvic, and J.~Lang}, {\em Pod-galerkin reduced-order
  modeling with adaptive finite element snapshots}, J. Comp. Phys., 325 (2016),
  pp.~244--258.

\bibitem{Ver:98}
{\sc R.~Verf\"urth}, {\em {Robust a posteriori error estimators for a
  singularly perturbed reaction-diffusion equation}}, Numer. Math., 78 (1998),
  pp.~479--493.

\bibitem{Ver:05b}
{\sc R.~Verf\"urth}, {\em Robust a posteriori error estimates for nonstationary
  convection--diffusion equations}, SIAM J. Numer. Anal., 43 (2005),
  pp.~1783--1802.

\bibitem{Ver:05}
{\sc R.~Verf\"urth}, {\em Robust a posteriori error estimates for stationary
  convection--diffusion equations}, SIAM J. Numer. Anal., 43 (2005),
  pp.~1766--1782.

\bibitem{Ver:09}
{\sc R.~Verf\"urth}, {\em A note on constant-free a posteriori error
  estimates}, SIAM J. Numer. Anal., 47 (2009), pp.~3180--3194.

\bibitem{Ver:13}
{\sc R.~Verf\"urth}, {\em A Posteriori Error Estimation Techniques for Finite
  Element Methods}, Oxford University Press, 2013.

\bibitem{Yano:15}
{\sc M.~Yano}, {\em {A reduced basis method with exact-solution certificates
  for steady symmetric coercive equations}}, Computer Methods in Applied
  Mechanics and Engineering, 287 (2015), pp.~290--309.

\bibitem{Yano:16}
{\sc M.~Yano}, {\em {A minimum-residual mixed reduced basis method: Exact
  residual certification and simultaneous finite-element reduced-basis
  refinement}}, ESAIM: Mathematical Modelling and Numerical Analysis, 50
  (2016), pp.~163--185.

\bibitem{Yano:17}
{\sc M.~Yano}, {\em A reduced basis method with an exact solution certificate
  and spatio-parameter adaptivity: application to linear elasticity}, in Model
  Reduction of Parametrized Systems, B.~P., O.~M., P.~A., R.~G., and U.~K.,
  eds., vol.~17 of MS\&A (Modeling, Simulation and Applications), Springer,
  2017.

\bibitem{Yano:18}
{\sc M.~Yano}, {\em {A reduced basis method for coercive equations with an
  exact solution certificate and spatio-parameter adaptivity: Energy-Norm and
  Output Error Bounds}}, SIAM J. Sci. Compt., 40 (2018), pp.~A388--A420.

\bibitem{Zeidler:85}
{\sc E.~Zeidler}, {\em Nonlinear Functional Analysis and its Applications III:
  Variational Methods and Optimization}, Springer, 1985.

\bibitem{zhang:20mixed}
{\sc S.~Zhang}, {\em Robust and local optimal a priori error estimates for
  interface problems with low regularity: Mixed finite element approximations},
  Journal of Scientific Computing, 84, 40 (2020).

\end{thebibliography}

\end{document}